\documentclass[12pt]{amsart}
\usepackage[margin=1in,includeheadfoot]{geometry}
\usepackage{amsmath}
\usepackage{slashed}
\usepackage{graphicx}

\usepackage[margin=1in,includeheadfoot]{geometry}
\usepackage{amsmath}
\usepackage{slashed}
\usepackage{graphicx}
\usepackage{mathrsfs}
\usepackage{txfonts}
\usepackage{amssymb,dsfont}
\usepackage{enumerate}
\usepackage{slashed}
\usepackage{fancyhdr}
\usepackage{aliascnt}
\usepackage{pdfsync}
\usepackage{hyperref}

\vfuzz2pt 
\hfuzz2pt 
\newtheorem{thm}{Theorem}[section]
\newtheorem{cor}[thm]{Corollary}
\newtheorem{lem}[thm]{Lemma}
\newtheorem{prop}[thm]{Proposition}
\theoremstyle{definition}
\newtheorem{defn}[thm]{Definition}
\theoremstyle{remark}

\numberwithin{equation}{section}

\newcounter{stepnum}

\def\bee{\begin{eqnarray}}
\def\beee{\begin{eqnarray*}}
\def\eee{\end{eqnarray}}
\def\eeee{\end{eqnarray*}}

\def\ba{\begin{array}}
\def\ea{\end{array}}

\def\R{\mathbb R}

\newcommand{\Om}{\Omega}

\newcommand{\ed}{{\rm d}}

\newcommand{\dv}{{\rm d}^{\ast}}
\newcommand{\D}{{\nabla}}

\newcommand{\fr}[2]{\frac{#1}{#2}}


\begin{document}

\title[Harmonic maps with free boundary]{The qualitative behavior at the free boundary for approximate harmonic maps from surfaces}

\author[Jost]{J\"urgen Jost}
\address{Max Planck Institute for Mathematics in the Sciences\\ Inselstrasse 22\\ 04103 Leipzig, Germany}
\address{Department of Mathematics\\ Leipzig University\\ 04081 Leipzig, Germany}
\email{jost@mis.mpg.de}

\author[Liu]{Lei Liu}%
\address{Department of mathematics\\Tsinghua University \\ HaiDian road\\ BeiJing, 100084 \\China}%
\address{Max Planck Institute for Mathematics in the Sciences\\ Inselstrasse 22 \\  04103 Leipzig,  Germany }
\email{leiliu@mis.mpg.de or llei1988@mail.ustc.edu.cn}%

\author[Zhu]{Miaomiao Zhu}
\address{School of Mathematical Sciences, Shanghai Jiao Tong University\\ 800 Dongchuan Road \\ Shanghai, 200240 \\China}
\email{mizhu@sjtu.edu.cn}

\thanks{The research leading to these results has received funding from the European Research
Council under the European Union's Seventh Framework Program
(FP7/2007-2013) / ERC grant agreement no. 267087. Miaomiao Zhu was supported in part by National Natural Science Foundation of China (No. 11601325). We would like to thank the referee for careful
comments and useful suggestions in improving the presentation of the paper}

\subjclass[2010]{53C43 58E20}
\keywords{harmonic map, heat flow, free boundary, blow-up, energy identity, no neck.}

\date{\today}
\begin{abstract}
Let $\{u_n\}$ be  a sequence of maps from a compact Riemann surface $M$ with smooth boundary to a general compact Riemannian manifold $N$ with free boundary on   a smooth submanifold $K\subset N$ satisfying
\[
\sup_n \ \left(\|\nabla u_n\|_{L^2(M)}+\|\tau(u_n)\|_{L^2(M)}\right)\leq \Lambda,
\]
where  $\tau(u_n)$ is the tension field of the map $u_n$. We show that the energy identity and the no neck property hold during a blow-up process. The assumptions are such that this result also applies to the harmonic map heat flow  with free boundary, to prove the energy identity at finite singular time as well as at infinity time. Also, the no neck property holds at infinity time.
\end{abstract}
\maketitle
\section{introduction}
Let $(M,g)$ be a compact Riemannian manifold with smooth boundary and $(N,h)$ be a compact Riemannian manifold of dimension $n$. Let $K\subset N$ be a $k-$dimensional closed submanifold where $1\leq k\leq n$. For a mapping $u\in C^2(M,N)$, the energy density of $u$ is defined by
\[
e(u)=\frac{1}{2}|\nabla u|^2={\rm Trace}_gu^*h,
\]
where $u^*h$ is the pull-back of the metric tensor $h$.

The energy of the mapping $u$ is defined as $$E(u)=\int_Me(u)dvol_g.$$

Define
\[
C(K)=\left \{u\in C^2(M,N);u(\partial M)\subset K \right \}.
\]
A critical point of the energy $E$ over $C(K)$ is a harmonic map with free boundary $u(\partial M)$ on $K$. The problem of the existence, uniqueness and regularity of such harmonic maps with a free boundary was first systematically investigated in \cite{GJ}.

By Nash's embedding theorem,  $(N,h)$ can be isometrically embedded into some Euclidean space $\mathbb{R}^N$. Then we can get the Euler-Lagrange equation
\[
\Delta_g u=A(u)(\nabla u,\nabla u),
\]
where $A$ is the second fundamental form of $N\subset \mathbb{R}^N$ and $\Delta_g$ is the Laplace-Beltrami operator on $M$ which is defined by $$\Delta_g:=-\frac{1}{\sqrt{g}}\frac{\partial}{\partial x^\beta}(\sqrt{g}g^{\alpha\beta}\frac{\partial}{\partial x^\alpha}).$$ Moreover, for $1\leq k\leq n-1$, $u$ has free boundary $u(\partial M)$ on $K$, that is
\begin{align}\label{FB}
u(x)\in K,\quad du(x)(\overrightarrow{n})\perp T_{u(x)}K, \quad a.e.\ x\in\partial M,
\end{align}
where $\overrightarrow{n}$ is the outward unite normal vector on $\partial M$ and $\perp$ means orthogonal.

Specially, for $k=n$, $u$ satisfies a homogeneous Neumann condition on $K$, that is
\begin{align}
u(x)\in K,\quad du(x)(\overrightarrow{n})=0, \quad a.e.\ x\in\partial M.
\end{align}

The tension field $\tau(u)$ is defined by
\begin{align}\label{HM}
&\tau(u)=-\Delta_g u+A(u)(\nabla u,\nabla u).
\end{align}
Thus, $u$ is a harmonic map if and only if $\tau(u)=0$.

When we consider a limit of a sequence of maps with uniformly $L^2$-bounded tension fields, the domain may decompose into several pieces (a phenomenon called bubbling or blow-up), and the limit map satisfies the equations or bounds on each piece. The question is whether the sum of the energies of the limit map on those pieces equals the limit of the energies of the approximating maps. Affirmative results are called energy identity and no neck property, and the approach is called blow-up theory; the precise definitions will be given below. Because the problem is conformally invariant only in dimension 2, the analysis usually needs to be restricted to that case, and this will also apply to this paper.

When $M$ is a closed surface, the compactness problem and the blow-up theory  (energy identity and no neck property) for a sequence of maps $\{u_n\}$ from $M$ to $N$ with uniformly $L^2$-bounded tension fields $\tau(u_n)$ and uniformly bounded energy has been extensively studied (see e.g.
\cite{Jost1991, parker1996bubble, qing1995, DingWeiyueandTiangang,Wang, qing1997bubbling}), since the fundamental work of Sacks-Uhlenbeck \cite{SU}. For sequences of general bounded tension fields, see \cite{LiJiayuandZhuXiangrong,LiJiayuandZhuXiangrong-2, Luoyong,Wang-Wei-Zhang}. For sequences of solutions of more general elliptic systems with an antisymmetric structure, we refer to \cite{LR, LS}. For corresponding results about harmonic map flows, see e.g. \cite{Struwe-1, qing1995, qing1997bubbling, Lin-Wang, Topping-1}. For results of other types of approximate sequences
for harmonic maps, see e.g. \cite{Jost1991, CM, La, Li-Wang2, hy}. For the energy identity of harmonic maps from higher dimensional domains, see \cite{LinR}.

In this paper, we shall study the blow-up analysis for a sequence of maps $\{u_n\}$ from a compact Riemann surface $M$ with smooth boundary $\partial M$ to a compact Riemannian manifold $N$ with uniformly $L^2$-bounded tension fields $\tau(u_n)$, uniformly bounded energy and with free boundary $u_n(\partial M)$ on $K$.
Since the interior case is already well understood, we shall focus on the case where the energy concentration occurs at the free boundary and complete the blow-up theory at the free boundary for a bubbling sequence. When boundary blow-up occurs, the corresponding neck domains are in general not simply half annuli and hence a finer decomposition of the neck domains would be necessary in order to carry out the neck analysis (see Section \ref{sec:energy-identity}).

In fact, we shall first address the regularity problem at the free boundary for weak solutions (see Section \ref{sec:regularity-results}) of
\begin{align}\label{weak-solution}
-\Delta_g u+A(u)(\nabla u,\nabla u)=F\ \ in\ M
\end{align}
for some $F\in L^p(M)$, $p>1$ and under the free boundary constraint \eqref{FB}, as it provides some necessary elliptic estimates at the free boundary, which form the analytical foundation of the blow-up theory for the sequence $\{u_n\}$ (see Section \ref{sec:basic-lemma}). We would like to remark that the regularity at the free boundary for weak solutions of \eqref{weak-solution} can be proved by applying the classical reflection methods for the harmonic map case by  Gulliver-Jost \cite{GJ} and Scheven \cite{Scheven} or  a modified reflection method in \cite{CJWZ} and \cite{sharpzhu} which combines H\'{e}lein's moving frame method \cite{helein_conservation} and Scheven's reflection method \cite{Scheven} so that the technique of Rivi\`{e}re-Struwe  in \cite{riviere_struwe} (which holds true also in dimension 2) can be applied. The latter was developed for Dirac-harmonic maps which includes harmonic maps as a special case. In this paper, we shall present an alternative approach without using moving frames (see Section \ref{sec:regularity-results}).

Now, we state our first main result:
\begin{thm}\label{main:thm-1}
Let $u_n:M\to N$ be a sequence of $W^{2,2}$ maps with free boundary $u_n(\partial M)$ on $K$ $(1\leq k\leq n)$, satisfying
\[
E(u_n)+\|\tau(u_n)\|_{L^2(M)}\leq\Lambda<\infty,
\]
where $\tau(u_n)$ is the tension field of $u_n$. We define the blow-up set
 \begin{equation}
\mathcal{S}:=\bigcap_{r>0}\left \{x\in M|\liminf_{n\to\infty}\int_{D^M_r(x)}|du_n|^2dvol\geq\overline{\epsilon}^2\right\},
\end{equation}
where $D^M_r(x)= \left \{y\in M|\ dist(x,y)\leq r \right \}$ denotes the geodesic ball in $M$ and $\overline{\epsilon}>0$ is a constant whose value will be given in  \eqref{def:small-energy}.
Then $\mathcal{S}$ is a finite set  $\{p_1,...,p_I\}$. By taking subsequences, $\{u_n\}$ converges in $W^{2,2}_{loc}(M \setminus \mathcal{S})$ to some limit map $u_0\in W^{2,2}(M,N)$
with free boundary on $K$ and there are finitely many bubbles: a finite set of harmonic spheres $w_i^l:S^2\to N$, $l=1,...,l_i$,
and a finite set of harmonic disks $w_i^k:D_1(0)\to N$,  $k=1,...,k_i$ with free boundaries on $K$, where $l_i,\ k_i\geq 0$ and $l_i+k_i\geq 1$, $i=1,...,I$, such that
\begin{eqnarray}
\lim_{n\to\infty}E(u_n)=E(u_0)+\sum_{i=1}^I\sum_{l=1}^{l_i}E(w^l_i)
+\sum_{i=1}^I\sum_{k=1}^{k_i}E(w^k_i),
\end{eqnarray}
and the image $u_0(M)\cup_{i=1}^I\big(\cup_{l=1}^{l_i}(w^l_i(S^2))
\cup_{k=1}^{k_i}(w^k_i(D_1(0)))\big)$ is a connected set. Here, harmonic spheres are minimal spheres and harmonic disks with free boundary on $K$ are minimal disks with free boundary on $K$.
\end{thm}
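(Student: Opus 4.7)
The plan is to adapt the classical Sacks-Uhlenbeck-Struwe bubbling scheme to the free boundary setting, relying on the small-energy regularity and elliptic estimates at the free boundary developed in Sections \ref{sec:regularity-results} and \ref{sec:basic-lemma}.

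First I would establish finiteness of $\mathcal{S}$ and the existence of the weak limit $u_0$. Choose $\overline{\epsilon}$ so small that the $\overline{\epsilon}$-regularity lemma yields a uniform $W^{2,2}$ bound on any geodesic disk or half-disk on which $\int |du_n|^2 < \overline{\epsilon}^2$, controlled by this local energy and $\|\tau(u_n)\|_{L^2}$. Since $\sup_n E(u_n)\le\Lambda$, a covering argument immediately gives $|\mathcal{S}|\le\Lambda/\overline{\epsilon}^2$. A diagonal subsequence then converges in $W^{2,2}_{loc}(M\setminus\mathcal{S})$ to a weak solution $u_0$ of $\tau(u_0)=0$ with free boundary on $K$, and because each point has vanishing $W^{1,2}$-capacity in dimension two, a standard removable singularity argument extends $u_0$ to a global $W^{2,2}(M,N)$ map with free boundary.

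Next I would extract the bubbles at each $p_i\in\mathcal{S}$. Choose $x_n\to p_i$ and scales $\lambda_n\to 0$ so that $\int_{D^M_{\lambda_n}(x_n)}|du_n|^2$ equals a prescribed fraction of $\overline{\epsilon}^2$, and rescale $v_n(y):=u_n(x_n+\lambda_n y)$ in suitable (interior or boundary normal) coordinates. The ratio $\mathrm{dist}(x_n,\partial M)/\lambda_n$ determines the type of bubble. When it diverges, $v_n$ converges in $C^2_{loc}$ (away from its own blow-up set) to a nonconstant finite-energy harmonic map $\mathbb{R}^2\to N$, which by conformal invariance and removable singularities extends to a harmonic sphere $w_i^l:S^2\to N$. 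When the ratio stays bounded, boundary-flattening turns $v_n$ into a sequence of free boundary maps on larger and larger half-disks, whose limit is a nonconstant finite-energy harmonic half-plane map with free boundary on $K$; the free boundary removable singularity theorem from Section \ref{sec:regularity-results} then extends it conformally to a harmonic disk $w_i^k:D_1(0)\to N$ with free boundary on $K$. Since the bubbles are weakly conformal, they are branched minimal immersions. Only finitely many bubbles appear at each $p_i$ because each nonconstant bubble carries at least a fixed quantum of energy: the classical gap theorem for harmonic spheres in the interior case, and the analogous gap for nonconstant harmonic disks with free boundary on $K$ provided by the $\overline{\epsilon}$-regularity for \eqref{weak-solution} under \eqref{FB}.

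The hard part is to prove the energy identity and the no-neck property, which together imply that the limit image is connected. Near a boundary blow-up point the neck region is not simply a half-annulus, so as outlined in Section \ref{sec:energy-identity} one must introduce a finer decomposition of the neck into parts adjacent to $\partial M$ and parts separated from it. On the interior-like pieces the classical three-annulus/Pohozaev argument gives the desired decay of the neck energy. On the boundary pieces, the same argument applies once one verifies that the boundary contributions in the Pohozaev identity can be absorbed, which uses crucially that $du_n(\overrightarrow{n})\perp T_{u_n}K$ along $\partial M$ as in \eqref{FB}. Combined with a sublinear growth estimate on the oscillation of $u_n$ over each neck piece, these give
\[
\int_{\mathrm{neck}}|du_n|^2\longrightarrow 0 \quad\text{and}\quad \mathrm{osc}_{\mathrm{neck}} u_n\longrightarrow 0,
\]
which is precisely the energy identity together with the no-neck property; connectedness of $u_0(M)\cup_{i=1}^I(\cup_{l=1}^{l_i}w_i^l(S^2)\cup_{k=1}^{k_i}w_i^k(D_1(0)))$ then follows immediately.
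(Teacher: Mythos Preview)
Your outline matches the paper's overall strategy: localize to a single blow-up point, distinguish the two cases according to whether $d_n/\lambda_n$ stays bounded, and reduce the energy identity and no-neck to controlling the neck domain, with the interior pieces handled by the classical theory. The paper's proof of Theorem~\ref{main:thm-1} is indeed just this reduction, with all the work done in Theorem~\ref{thm:1}.

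The gap is in your treatment of the boundary neck piece. You say that on the half-annulus ``the same argument applies once one verifies that the boundary contributions in the Pohozaev identity can be absorbed'' via \eqref{FB}. That is true for the Pohozaev identity itself (Lemma~\ref{Pohozaev}): the flat-boundary terms vanish because $x_0\in\partial^0 D^+$ makes $(x-x_0)\cdot\nabla u$ tangential to $K$ while $du(\overrightarrow{n})\perp T_uK$. But the energy identity is not proved by Pohozaev alone; the decisive step is the integration by parts
\[
\int \nabla u_n\cdot\nabla(u_n-u_n^\ast)=\int_{\partial}\frac{\partial u_n}{\partial n}(u_n-u_n^\ast)-\int \Delta u_n\,(u_n-u_n^\ast),
\]
where $u_n^\ast$ is the angular average. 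On a half-annulus the flat boundary contributes $\int_{\partial^0}\frac{\partial u_n}{\partial n}(u_n-u_n^\ast)$, and here $u_n-u_n^\ast$ has no reason to lie in $T_{u_n}K$, so the free boundary condition does not kill it. The paper's way around this is the involution extension of Section~\ref{sec:regularity-results}: one re-centers the neck at the projected point $x_n'\in\partial^0 D^+$, extends $u_n$ to $\widehat{u}_n$ on the full annulus $\widehat{\Omega}_2$ via \eqref{def:function}, and runs the $u^\ast$-comparison on $\widehat{\Omega}_2$, where the boundary consists only of the two circles. The price is that $\widehat{u}_n$ satisfies \eqref{equation:global-form} rather than the harmonic map equation, and that $|D\sigma|$ is not exactly an isometry; these produce the error terms controlled by $\|P^TP-Id\|$ and by Corollary~\ref{cor:01}, which is where the Pohozaev identity actually enters. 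The same extended-annulus device is used again to derive the differential inequality $f(t)\leq \frac{C}{\log 2}f'(t)+\cdots$ that gives the exponential decay needed for the no-neck. Without the involution step your sketch of the boundary neck analysis does not go through as written.
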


In contrast to the Dirichlet problem where, due to the pointwise boundary condition, no blow-up at the boundary is possible. Here, a blow-up may occur at the boundary and produce one or more harmonic disks with the same free boundary $K$ as the original maps. We should also mention that the Plateau boundary condition for minimal surfaces can also be seen as a free boundary condition where the target set $K$ is a Jordan curve. Here, the monotonicity condition and the three-point normalization that are usually imposed prevent a boundary blow-up, however, see \cite{GJ} and the systematic discussion in \cite{Jost1991}.


Our results in the above theorem apply to some classical problems like minimal surfaces in Riemannian manifolds with free boundaries, harmonic functions with free boundary (c.f. \cite{LP}) as well as to pseudo holomorphic curves in sympletic manifolds with totally real boundary conditions and Lagrangian boundary conditions, c.f. \cite{Ye, F, MS, IS, We} and to string theory where the free boundary represents a D-brane, c.f. \cite{Jost2009}.


The reason why we work with a sequence of maps with uniformly $L^2$-bounded tension fields and with free boundary is that we want to apply our results in Theorem \ref{main:thm-1} to the following heat flow for harmonic maps with free boundary:
\bee
\partial_tu(x,t)&=&\tau(u(x))\quad (x,t)\in M\times (0,T);\label{HMF:1}\\
u(\cdot,0)&=&u_0(x)\quad x\in M;\label{HMF:2}\\
u(x,t)&\in& K, \quad a.e. \  x\in\partial M, \quad  \forall \ t\geq 0;\label{HMF:3}\\
du(x)(\overrightarrow{n})&\perp& T_{u(x)}K, \quad \forall (x,t)\in\partial M\times (0,T).\label{HMF:4}
\eee
The existence of a global weak solution of (\ref{HMF:1}-\ref{HMF:4}) with finitely many
singularities was considered by Ma \cite{Ma-li}, following the pioneering works by Struwe \cite{Struwe-1,Struwe-2}.
For higher dimensional cases, we refer to \cite{Struwe-3, Chen-Lin}. For other work on the harmonic map flow with free boundary, see \cite{Li}. For the harmonic map flow with Dirichlet boundary condition, we refer to Chang \cite{Chang}.

Let $u:M\times(0,\infty)\to N$ be a global weak solution to (\ref{HMF:1}-\ref{HMF:4}), which is smooth away from a finite number of singular points $\{(x_i,t_i)\}\subset M\times(0,\infty)$. In this paper, we shall complete the qualitative picture at the singularities of this flow, where bubbles (nontrivial harmonic spheres or nontrivial harmonic disks with free boundary) split off.

At infinite time, we have
\begin{thm}\label{thm:infty-time}
There exist a harmonic map $u_\infty:M\to N$ with free boundary in $K$, a finite number of bubbles $\{\omega_i\}_{i=1}^m$ and sequences $\{x^i_n\}_{i=1}^m\subset M$, $\{\lambda^i_n\}_{i=1}^m\subset \mathbb{R}_+$ and $\{t_n\}\subset \mathbb{R}_+$ such that
\begin{align}
\lim_{t\nearrow \infty}E(u(\cdot,t),M)=E(u_\infty,M)+\sum_{i=1}^mE(\omega_i)
\end{align}
and
\begin{align}\label{equation:30}
\|u(\cdot,t_n)-u_\infty(\cdot)-\sum_{i=1}^m\omega^i_n(\cdot)\|_{L^\infty(M)}\to 0
\end{align}
as $n\to \infty$, where $\omega^i_n(\cdot)=\omega^i\left(\frac{\cdot-x^i_n}{\lambda^i_n}\right)-\omega_i(\infty)$. Here, \eqref{equation:30} is equivalent to say that the image of weak limit $u_\infty$ and bubbles $\{\omega_i\}_{i=1}^m$ is a connected set as in Theorem \ref{main:thm-1}.
\end{thm}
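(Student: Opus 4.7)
The plan is to deduce Theorem \ref{thm:infty-time} from Theorem \ref{main:thm-1} by choosing a suitable sequence of times $t_n\nearrow\infty$ along the heat flow for which the tension fields go to zero in $L^2$, and then applying the blow-up theory established earlier.

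First I would invoke the energy inequality for the free boundary heat flow (as in Ma \cite{Ma-li}, extending Struwe \cite{Struwe-1}): testing \eqref{HMF:1} against $\partial_t u$ and integrating, together with the boundary conditions \eqref{HMF:3}--\eqref{HMF:4}, yields
\begin{equation*}
\int_0^T\!\!\int_M |\partial_t u|^2\, dvol_g\, dt + E(u(\cdot,T)) = E(u_0)\quad\text{for every }T>0.
\end{equation*}
Hence $E(u(\cdot,t))$ is non-increasing and converges as $t\nearrow\infty$, and $\int_0^\infty\|\partial_t u(\cdot,t)\|_{L^2(M)}^2\,dt<\infty$. In particular there exists a sequence $t_n\nearrow\infty$ such that
\begin{equation*}
\|\tau(u(\cdot,t_n))\|_{L^2(M)}=\|\partial_t u(\cdot,t_n)\|_{L^2(M)}\longrightarrow 0.
\end{equation*}

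Second, I would set $u_n:=u(\cdot,t_n)$. These maps belong to $W^{2,2}(M,N)$ (by the regularity theory for the flow), satisfy the free boundary condition \eqref{FB}, and by energy monotonicity $E(u_n)\leq E(u_0)$. Thus $\{u_n\}$ meets the hypotheses of Theorem \ref{main:thm-1} with a uniform bound $\Lambda$. Applying it, I obtain a subsequence (still denoted $u_n$), a finite blow-up set $\mathcal{S}$, a weak limit $u_\infty\in W^{2,2}(M,N)$ converging in $W^{2,2}_{\mathrm{loc}}(M\setminus\mathcal{S})$ with free boundary on $K$, and finitely many bubbles $\{\omega_i\}_{i=1}^m$ (harmonic $2$-spheres for interior blow-up points and harmonic disks with free boundary on $K$ for blow-up points on $\partial M$), with concentration data $x_n^i\in M$ and scales $\lambda_n^i\to 0$. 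Because $\tau(u_n)\to 0$ strongly in $L^2$, the limit $u_\infty$ satisfies $\tau(u_\infty)=0$ and is therefore a harmonic map with free boundary on $K$.

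The energy identity then follows by combining Theorem \ref{main:thm-1}, which gives
\begin{equation*}
\lim_{n\to\infty} E(u_n)=E(u_\infty,M)+\sum_{i=1}^m E(\omega_i),
\end{equation*}
with the monotonicity of $E(u(\cdot,t))$, which forces the limit along $t_n$ to coincide with $\lim_{t\nearrow\infty}E(u(\cdot,t))$. For the statement \eqref{equation:30}, I would use the ``connected image'' conclusion of Theorem \ref{main:thm-1}, which is precisely the no neck property; as the authors explicitly note after \eqref{equation:30}, the $L^\infty$ decomposition with $\omega_n^i(x)=\omega_i\bigl(\tfrac{x-x_n^i}{\lambda_n^i}\bigr)-\omega_i(\infty)$ is equivalent to the absence of necks, so it follows directly. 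The principal obstacle in this argument is not in the application itself (which is short) but lies entirely upstream: one must trust the free boundary version of Struwe's energy inequality for selecting $t_n$, and, more importantly, the full blow-up analysis at the free boundary carried out in Theorem \ref{main:thm-1}, in particular the no neck property for boundary bubbles, whose neck regions are not simply half-annuli and therefore require the finer decomposition alluded to in Section \ref{sec:energy-identity}.
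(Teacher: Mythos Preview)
Your proposal is correct and follows essentially the same route as the paper: use the energy inequality (Lemma \ref{lem:monotonicity}) to pick $t_n\nearrow\infty$ with $\|\partial_t u(\cdot,t_n)\|_{L^2}\to 0$, set $u_n=u(\cdot,t_n)$, and apply Theorem \ref{main:thm-1}; the monotonicity of $E(u(\cdot,t))$ then upgrades the subsequential energy identity to the full limit $t\nearrow\infty$, and the no neck property gives \eqref{equation:30}. The paper's proof is essentially a two-line invocation of Lemma \ref{lem:monotonicity} and Theorem \ref{main:thm-1}, so your write-up is, if anything, slightly more explicit.
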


For finite time blow-ups, we have
\begin{thm}\label{thm:finite-time}
For $T_0<\infty$, let $u\in C^\infty(M\times (0,T_0),N)$ be a solution to (\ref{HMF:1}-\ref{HMF:4}) with $T_0$ as its singular time. Then there exist  finite many bubbles $\{\omega_i\}_{i=1}^l$ such that
\begin{align}
\lim_{t\nearrow T_0}E(u(\cdot,t),M)=E(u(\cdot,T_0),M)+\sum_{i=1}^lE(\omega_i).
\end{align}
\end{thm}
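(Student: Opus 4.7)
The plan is to reduce Theorem~\ref{thm:finite-time} to a direct application of Theorem~\ref{main:thm-1} by extracting, along the flow, a sequence of times $t_n \nearrow T_0$ for which the tension fields $\tau(u(\cdot,t_n))$ are uniformly $L^2$-bounded. This is the standard Struwe-type strategy, adapted here to the free-boundary setting.

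\textbf{First, an energy identity in time.} Multiplying \eqref{HMF:1} by $\partial_t u$, integrating over $M$, and using \eqref{HMF:3}--\eqref{HMF:4} to kill the boundary contribution $\int_{\partial M}\langle du(\overrightarrow{n}),\partial_t u\rangle$ (note that $\partial_t u \in T_u K$ along $\partial M$ since $u(\cdot,t)\in K$, while $du(\overrightarrow{n})\perp T_u K$), I obtain
\begin{equation*}
E(u(\cdot,t_2)) + \int_{t_1}^{t_2}\!\int_M |\partial_t u|^2 \, dvol_g \, dt = E(u(\cdot,t_1)), \qquad 0 \le t_1 \le t_2 < T_0.
\end{equation*}
Thus $t \mapsto E(u(\cdot,t))$ is non-increasing, $\lim_{t\nearrow T_0} E(u(\cdot,t))$ exists, and $\int_0^{T_0}\!\int_M |\partial_t u|^2\,dvol_g\,dt \le E(u(\cdot,0))<\infty$.

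\textbf{Second, applying Theorem~\ref{main:thm-1}.} By the pigeonhole principle applied to the finite time-integral above, there exist $t_n \nearrow T_0$ with $\|\partial_t u(\cdot,t_n)\|_{L^2(M)} \le C$. Since $\tau(u(\cdot,t)) = \partial_t u(\cdot,t)$ along the flow, the maps $u_n := u(\cdot,t_n)$ satisfy
\begin{equation*}
E(u_n) + \|\tau(u_n)\|_{L^2(M)} \le \Lambda, \qquad u_n(\partial M) \subset K,
\end{equation*}
so Theorem~\ref{main:thm-1} furnishes a finite blow-up set, a limit $u_\infty \in W^{2,2}(M,N)$ with free boundary on $K$, finitely many bubbles $\{\omega_i\}_{i=1}^l$ (harmonic spheres and/or harmonic disks with free boundary on $K$), and
\begin{equation*}
\lim_{n\to\infty} E(u_n) = E(u_\infty) + \sum_{i=1}^l E(\omega_i).
\end{equation*}

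\textbf{Third, identifying the limit and concluding.} The estimate
\begin{equation*}
\|u(\cdot,t)-u(\cdot,s)\|_{L^2(M)}^2 \le (t-s)\int_s^t\|\partial_t u(\cdot,\tau)\|_{L^2(M)}^2\,d\tau
\end{equation*}
shows that $u(\cdot,t)$ is Cauchy in $L^2(M)$ as $t\nearrow T_0$, so together with the uniform $W^{1,2}$ bound, the entire family $u(\cdot,t)$ converges weakly in $W^{1,2}$ as $t\nearrow T_0$ to a unique limit, which we take as the definition of $u(\cdot,T_0)$ and which must then coincide with $u_\infty$; it has free boundary on $K$ in the trace sense and is a smooth harmonic map away from the finite blow-up set. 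Combined with the monotonicity from Step~1, this promotes the subsequential identity to
\begin{equation*}
\lim_{t\nearrow T_0} E(u(\cdot,t)) = E(u(\cdot,T_0)) + \sum_{i=1}^l E(\omega_i).
\end{equation*}
The main obstacle is precisely this final identification: one must be sure that the bubbles selected by Theorem~\ref{main:thm-1} at the discrete times $t_n$ capture \emph{all} the energy lost as $t\nearrow T_0$, with nothing escaping between consecutive $t_n$. This is exactly what the time-monotonicity of $E(u(\cdot,t))$ guarantees, which is why Step~1 is the key input.
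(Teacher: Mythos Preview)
There is a genuine gap in Step~2. From $\int_0^{T_0}\|\partial_t u(\cdot,t)\|_{L^2(M)}^2\,dt<\infty$ with $T_0<\infty$ you \emph{cannot} conclude that there exist $t_n\nearrow T_0$ with $\|\partial_t u(\cdot,t_n)\|_{L^2(M)}\le C$. Integrability of a nonnegative function over a finite interval does not prevent it from blowing up at the endpoint: take $g(t)=(T_0-t)^{-1/2}$, which satisfies $\int_0^{T_0}g<\infty$ yet $g(t_n)\to\infty$ along every sequence $t_n\to T_0$. Your pigeonhole argument works for Theorem~\ref{thm:infty-time} (infinite time forces $\liminf_{t\to\infty}\|\partial_t u(\cdot,t)\|_{L^2}=0$), but not here. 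Without a uniform $L^2$ bound on $\tau(u_n)=\partial_t u(\cdot,t_n)$, Theorem~\ref{main:thm-1} does not apply to the sequence $u(\cdot,t_n)$.

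This is precisely why the paper (following Lin--Wang) does not apply Theorem~\ref{main:thm-1} directly to time slices of $u$. Instead, at each singular point it performs a parabolic rescaling $u_n(x,t):=u(\lambda_n x,\,t_n+\lambda_n^2 t)$ with $\lambda_n\to 0$. The scaling is chosen so that
\[
\int_{-2}^{0}\int|\partial_t u_n|^2\,dx\,dt=\int_{t_n-2\lambda_n^2}^{t_n}\int_M|\partial_t u|^2\,dx\,dt\;\longrightarrow\;0,
\]
the right side tending to zero by absolute continuity of the space--time integral (the time interval has length $2\lambda_n^2\to 0$). Now Fubini on the \emph{rescaled} cylinder (a unit-length time interval) yields $s_n\in(-1,-\tfrac12)$ with $\|\partial_t u_n(\cdot,s_n)\|_{L^2}\to 0$, so Theorem~\ref{thm:1} applies to the rescaled maps $u_n(\cdot,s_n)$; together with Lemma~\ref{lem:2} and Lemma~\ref{lem:two-balls} one shows their weak limit is constant and their total bubble energy equals the mass $m$ lost at the singular point. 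The rescaling is not cosmetic: it is exactly what converts the finite space--time bound into a good time slice. Your argument recovers the paper's proof of Theorem~\ref{thm:infty-time} but does not go through at finite singular time.
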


To study the regularity or the qualitative behavior at the free boundary for approximate harmonic maps in this paper, we need some new observations. Firstly, we need to extend the solution across the free boundary as in the harmonic map case done by Scheven \cite{Scheven} and the main difficulty is to write the equation of the extended map into an elliptic system with an antisymmetric potential up to some transformation (see Proposition 3.3). Secondly, thanks to the free boundary condition, we can apply the Pohozaev's argument which was firstly introduced by Lin-Wang \cite{Lin-Wang} for approximate harmonic maps, in the local region as $D_r(x)\cap M$ with $x\in \partial M$. See Lemma 4.3. This is crucial when we estimate the energy concentration in the neck domain. Thirdly, we have a finer observation of the neck domain. For the boundary blow-up point, the neck domains consist of some irregular half annulus. We will decompose these irregular neck domains into three parts as: interior parts, regular half annulus with the center points living on the boundary and the remaining parts. The first and third parts are easy to control due to the classical blow-up theory of (approximate) harmonic maps with interior blow-up points. In this paper, we focus on the energy concentration in the domains of the second parts.

Since the extended map satisfies an elliptic system with an antisymmetric potential up to some transformation and with some error term $F$ (see Proposition 3.3), one can utilize the idea in \cite{LR} (with $F=0$) with some modifications to get the energy identity. Here in the present paper, we shall adapt the methods in \cite{DingWeiyue} developed for the interior bubbling case to get the energy identity and the no neck property in the free boundary case. To show the no neck property, namely, bubble tree convergence, we shall get the exponential decay of the energy by deriving a differential inequality on the neck region.

This paper is organized as follows. In Section \ref{sec:prelimilary},  we recall some classical results which will be used in this paper.
In Section \ref{sec:regularity-results}, we derive a new form of the elliptic system for the extended map after involution across the boundary which will allow us to  turn the boundary regularity problem into an interior regularity problem. As a corollary of this boundary regularity result, we prove a removability theorem for singularities at the free boundary. In Section \ref{sec:basic-lemma}, using the new equation of the involuted map, we obtain the small energy regularity in the free boundary case. The gap theorem and Pohozaev's identity in the free boundary case will also be established. In Section \ref{sec:energy-identity}, we prove the energy identity and no neck property at the free boundary by decomposing the neck domain into several parts including a half annulus centered at the boundary and then using the involuted map's equation. Combining this with the interior blow-up theory, we complete the proof of Theorem \ref{main:thm-1}. In Section \ref{set:heat-flow}, we apply Theorem \ref{main:thm-1} to the harmonic map flow with free boundary and prove Theorem \ref{thm:infty-time} and Theorem \ref{thm:finite-time}.

\

\noindent\textbf{Notation:}
$D_r(x_0)$ denotes the closed ball of radius $r$ and center $x_0$ in $\mathbb{R}^2$. Denote
\begin{align*}
 &D^+_r(x_0):=\{x=(x^1,x^2)\in D_r(x_0)|x^2\geq 0\}, D^-_r(x_0):=\{x=(x^1,x^2)\in D_r(x_0)|x^2\leq 0\},\\
  & \partial^+ D_r(x_0):=\{x=(x^1,x^2)\in \partial D_r(x_0)|x^2\geq 0\},
   \partial^- D_r(x_0):=\{x=(x^1,x^2)\in \partial D_r(x_0)|x^2\leq 0\},\\
    &\partial^0 D^+_r(x_0)=\partial^0 D^-_r(x_0):=\partial D^+_r(x_0)\setminus \partial^+ D_r(x_0).
\end{align*}
Let $a\geq 0$ be a constant, denote
\[
\mathbb{R}^2_a:=\{(x^1,x^2)|x^2\geq -a\}\ \ and \ \ \mathbb{R}^{2+}_a:=\{(x^1,x^2)|x^2> -a\}.
\]
For convenience, we denote $D_r=D_r(0)$, $D=D_1(0)$ and $\mathbb{R}^2_+=\mathbb{R}^2_a$ when $a=0$.

Let $T\subset \partial M$ be a smooth boundary portion, denote
\[
W^{k,p}_\partial (T)=\{g\in L^1(T):g=G|_T \mbox{ for some }G\in W ^{k,p}(M)\}
\]
with norm
\[
\|g\|_{W^{k,p}_\partial (T)}=\inf_{G\in W ^{k,p}(M),G|_{T}=g}\|G\|_{W ^{k,p}(M)}.
\]

In this paper, we use the notation $\Delta_g$ (or $\Delta_M$) to denote the Laplace-Beltrami operator on the Riemannian manifold $(M,g)$ and use $\Delta:=\partial^2_x+\partial^2_y$ to denote the usual Laplace operator on $\R^2$.

\

\section{Preliminary results}\label{sec:prelimilary}

\

In this section, we will recall some well known results that are useful for our problem.

Firstly, we recall the interior small energy regularity result (see \cite{DingWeiyueandTiangang,LiJiayuandZhuXiangrong}) which is firstly introduced in \cite{SU}.
\begin{lem}\label{lem:small-energy-regularity-interior}
Let $u\in W^{2,p}(D,N)$ for some $1<p\leq 2$.  There exist constants $\epsilon_1=\epsilon_1(p,N)>0$ and $C=C(p,N)>0$, such that if $\|\nabla u\|_{L^2(D)}\leq\epsilon_1$, then
\begin{equation}
\|u-\frac{1}{\pi}\int_Du(x)dx\|_{W^{2,p}(D_{1/2})}\leq C(p,N)(\|\nabla u\|_{L^p(D)}+\|\tau(u)\|_{L^p(D)}),
\end{equation}
where $\tau(u)$ is the tension field of $u$.

Moreover, by the Sobolev embedding $W^{2,p}(\mathbb{R}^2)\subset C^0(\mathbb{R}^2)$, we have
\begin{equation}
\|u\|_{Osc(D_{1/2})}=\sup_{x,y\in D_{1/2}}|u(x)-u(y)|\leq C(p,N)(\|\nabla u\|_{L^p(D)}+\|\tau(u)\|_{L^p(D)}).
\end{equation}
\end{lem}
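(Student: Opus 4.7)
The plan is to derive the estimate from the PDE $\Delta u = A(u)(\nabla u,\nabla u) - \tau(u)$ via a cutoff reduction to a compactly supported problem, Calder\'on--Zygmund $L^p$ theory, and a Gagliardo--Nirenberg interpolation that lets the $L^2$ smallness of $\nabla u$ absorb the quadratic nonlinearity.

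First I would fix a smooth cutoff $\eta \in C_c^\infty(D_{3/4})$ with $\eta \equiv 1$ on $D_{1/2}$, set $\bar u := \tfrac{1}{\pi}\int_D u$ and $v := \eta(u - \bar u)$, extended by zero to $\mathbb{R}^2$. A direct computation using the PDE yields
\[
-\Delta v = -\eta\, A(u)(\nabla u,\nabla u) + \eta\, \tau(u) - 2 \nabla \eta \cdot \nabla u - (\Delta \eta)(u - \bar u).
\]
The Calder\'on--Zygmund estimate then gives $\|v\|_{W^{2,p}(\mathbb{R}^2)} \leq C(p)\|\Delta v\|_{L^p(\mathbb{R}^2)}$, while Poincar\'e on $D$ bounds $\|u-\bar u\|_{L^p(D)}$ by $C\|\nabla u\|_{L^p(D)}$, so the last three terms on the right are dominated by the right-hand side of the lemma. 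The serious term is the quadratic $\eta\, A(u)(\nabla u,\nabla u)$.

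For this term I would use $|A(u)(\nabla u,\nabla u)| \leq C|\nabla u|^2$ together with the Gagliardo--Nirenberg inequality in the plane,
\[
\|\nabla u\|_{L^{2p}(D_{3/4})}^2 \leq C\, \|\nabla u\|_{L^2(D)}\, \bigl(\|\nabla u\|_{L^p(D)} + \|\nabla^2 u\|_{L^p(D_{3/4})}\bigr),
\]
to obtain
\[
\|\eta\, A(u)(\nabla u,\nabla u)\|_{L^p(\mathbb{R}^2)} \leq C\, \epsilon_1\, \bigl(\|\nabla u\|_{L^p(D)} + \|\nabla^2 u\|_{L^p(D_{3/4})}\bigr).
\]
Since $\|\nabla^2 u\|_{L^p(D_{3/4})}$ lives on a slightly larger disk than the left-hand side of the target inequality, I would iterate the same cutoff argument on a nested family $D_{1/2} \subset D_{r_1} \subset \cdots \subset D_{r_k} \subset D$, getting at each scale an inequality of the form $\|\nabla^2 u\|_{L^p(D_{r_j})} \leq C\epsilon_1 \|\nabla^2 u\|_{L^p(D_{r_{j+1}})} + (\text{terms in } \|\nabla u\|_{L^p(D)} + \|\tau(u)\|_{L^p(D)})$. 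Choosing $\epsilon_1$ small enough that $C\epsilon_1 < \tfrac{1}{2}$, a Sacks--Uhlenbeck-style telescoping argument absorbs the $\nabla^2 u$ term and gives the stated $W^{2,p}$ bound on $D_{1/2}$.

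The main technical obstacle is exactly this absorption step: Calder\'on--Zygmund applied to $v = \eta(u-\bar u)$ naturally produces $\|\nabla^2 u\|_{L^p}$ on the support of $\eta$, which is strictly larger than the disk where $\eta \equiv 1$, so one cannot close the estimate on a single pair of disks. The telescoping resolves this cleanly under the smallness hypothesis on $\|\nabla u\|_{L^2(D)}$. The oscillation bound on $D_{1/2}$ then follows immediately from the Sobolev embedding $W^{2,p}(\mathbb{R}^2) \hookrightarrow C^0(\mathbb{R}^2)$, which is valid whenever $p > 1$, applied to $u - \bar u$.
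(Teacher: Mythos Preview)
Your approach is correct, and the paper does not actually prove this lemma --- it is quoted from \cite{DingWeiyueandTiangang,LiJiayuandZhuXiangrong} as a known preliminary. So there is no ``paper's own proof'' to match here.

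That said, it is worth comparing your argument to the paper's proof of the boundary analogue, Lemma~\ref{lem:small-energy-regularity}, since that is where the paper carries out this computation explicitly. There the telescoping is avoided entirely by a slightly different handling of the quadratic term. Instead of estimating $\|\eta|\nabla u|^2\|_{L^p}\leq \|\nabla u\|_{L^{2p}(\mathrm{supp}\,\eta)}^2$ and then invoking Gagliardo--Nirenberg (which, as you observe, lands $\|\nabla^2 u\|_{L^p}$ on the larger disk and forces an iteration), one writes
\[
\eta|\nabla u|^2 \leq |\nabla u|\,|\nabla(\eta u)| + C(N)|\nabla u|,
\]
using $\eta\nabla u=\nabla(\eta u)-(\nabla\eta)u$ and the compactness of $N$ to bound $|u|$. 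Then H\"older with exponents $2$ and $\tfrac{2p}{2-p}$ gives
\[
\big\||\nabla u|\,|\nabla(\eta u)|\big\|_{L^p}\leq \|\nabla u\|_{L^2(D)}\,\|\nabla(\eta u)\|_{L^{\frac{2p}{2-p}}},
\]
and since $\eta u$ is compactly supported, the Sobolev embedding $W^{2,p}(\R^2)\hookrightarrow W^{1,\frac{2p}{2-p}}(\R^2)$ returns $\|\eta u\|_{W^{2,p}}$ on the \emph{same} side, closing the estimate in a single step. Your Gagliardo--Nirenberg route is equally valid and arguably more robust (it does not rely on $|u|\leq C$), but it costs you the extra Giaquinta-type iteration; the paper's device of keeping $\nabla(\eta u)$ intact is the shortcut that eliminates that step.
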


Secondly, we recall a gap theorem for the case of a closed domain.
\begin{lem}[\cite{DingWeiyue}]
There exists a constant $\epsilon_0=\epsilon_0(M,N)>0$ such that if $u$ is a smooth harmonic map from a closed Riemann surface $M$
to a compact Riemannian manifold $N$ and satisfying $$\int_M|\nabla u|^2dvol\leq\epsilon_0,$$ then $u$ is a constant map.
\end{lem}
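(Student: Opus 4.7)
The plan is to combine the small-energy regularity result of Lemma~\ref{lem:small-energy-regularity-interior} with a direct testing of the extrinsic harmonic map equation $-\Delta_g u = A(u)(\nabla u,\nabla u)$ against $u - u_0$, for a suitable constant $u_0 \in \mathbb{R}^N$. The underlying heuristic is that small total energy should force $u$ into a tiny $C^0$-ball of the embedded target, so that the $C^0$-smallness of $u - u_0$ turns the quadratic right-hand side into an error term absorbable into the left-hand side.

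First, I cover $M$ by finitely many geodesic disks $\{D_{r_0}(x_j)\}_{j=1}^{J}$, where $r_0 = r_0(M)$ is small enough that, after passing to isothermal coordinates and rescaling, Lemma~\ref{lem:small-energy-regularity-interior} applies on each disk with constants depending only on $M$ and $N$. Since $\|\nabla u\|_{L^2(D_{r_0}(x_j))}^2 \leq \int_M|\nabla u|^2 \leq \epsilon_0$, choosing $\epsilon_0 \leq \epsilon_1^2$ ensures the smallness hypothesis; using also $\tau(u)=0$, the oscillation estimate of Lemma~\ref{lem:small-energy-regularity-interior} (taking $p=2$) yields $\mathrm{osc}_{D_{r_0/2}(x_j)} u \leq C(M,N)\sqrt{\epsilon_0}$. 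Chaining through the finite cover using connectedness of $M$ then gives the global oscillation bound $\mathrm{osc}_M u \leq C(M,N)\sqrt{\epsilon_0}$.

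Next, fix any base point $x_0 \in M$, set $u_0 := u(x_0) \in \mathbb{R}^N$, and test the equation componentwise against $u - u_0$. Since $\partial M = \emptyset$ and $u_0$ is constant, integration by parts gives
\[
\int_M |\nabla u|^2\,dv_g \;=\; \int_M (u - u_0)\cdot A(u)(\nabla u,\nabla u)\,dv_g.
\]
By compactness of $N$ one has $|A(u)(\nabla u,\nabla u)| \leq C(N)|\nabla u|^2$, while the previous step gives $|u - u_0| \leq C(M,N)\sqrt{\epsilon_0}$ pointwise on $M$. Combining these,
\[
\int_M |\nabla u|^2\,dv_g \;\leq\; C(M,N)\sqrt{\epsilon_0}\int_M |\nabla u|^2\,dv_g,
\]
and choosing $\epsilon_0$ so small that $C(M,N)\sqrt{\epsilon_0} \leq 1/2$ forces $\int_M|\nabla u|^2 = 0$, so $u$ is constant.

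The only real technicality I anticipate is making the transfer of Lemma~\ref{lem:small-energy-regularity-interior} from the flat unit disk to geodesic disks on $(M,g)$ precise; this is routine via isothermal coordinates, but one has to keep track that the $L^2$ threshold $\epsilon_1$ and the oscillation constant only degrade by an amount depending on $M$ alone. Once that is handled, the rest of the argument is purely algebraic.
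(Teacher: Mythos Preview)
Your proof is correct. The paper does not supply its own proof of this lemma; it is simply quoted from \cite{DingWeiyue}. For comparison, the paper does prove the free-boundary analogue (Lemma~\ref{lem:gap-theory}) by a somewhat different route: after using the small-energy oscillation bound to localize the image of $u$ in a single Fermi chart of $N$ near $K$, it rewrites the equation in local coordinates and applies global $W^{2,4/3}$ elliptic estimates (with mixed Dirichlet/Neumann boundary data) together with the Sobolev embedding $W^{2,4/3}\hookrightarrow W^{1,4}$ to obtain
\[
\|\nabla u\|_{W^{1,4/3}(M)} \leq C(M,N)\sqrt{\epsilon}\,\|\nabla u\|_{L^4(M)} \leq C(M,N)\sqrt{\epsilon}\,\|\nabla u\|_{W^{1,4/3}(M)},
\]
which forces $\nabla u = 0$. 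Your argument is more elementary in that it avoids any second-order elliptic estimate beyond what is already contained in Lemma~\ref{lem:small-energy-regularity-interior}: once the $C^0$-smallness of $u-u_0$ is established, testing the extrinsic equation against $u-u_0$ and integrating by parts closes directly at the level of the energy. Both approaches share the same first step (global oscillation control from local small-energy regularity); the difference is only in how the absorption is performed.
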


Thirdly, we state an interior removable singularity result.
\begin{thm}[\cite{Li-Wang}]\label{thm:remov-sing-interior}
Let  $u:D\setminus\{0\}\to N$ be a $W^{2,2}_{loc}(D\setminus\{0\})$ map with finite energy that satisfies
\[
\tau(u)=g\in L^2(D,TN),\quad x\in D\setminus\{0\}.
\]
Then $u$ can be extended to a map in $W^{2,2}(D,N)$.
\end{thm}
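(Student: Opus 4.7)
The plan is to combine a dyadic application of the interior small-energy regularity of Lemma \ref{lem:small-energy-regularity-interior} with a standard ``zero $W^{1,2}$-capacity of a point'' argument in dimension two. First I would extend $u$ continuously across $0$ via oscillation decay on dyadic annuli, then show that the distributional equation extends across the origin, and finally upgrade to $W^{2,2}$-regularity by one more application of Lemma \ref{lem:small-energy-regularity-interior} on a small disk about $0$.

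For the continuous extension I would exploit absolute continuity of $|\nabla u|^2, |g|^2 \in L^1(D)$: given any $\eta > 0$, pick $r_0>0$ such that $\int_{D_{2r_0}}(|\nabla u|^2+|g|^2)<\eta$. For $0<r<r_0/2$, rescale by $u_r(x):=u(rx)$ on $D_4\setminus D_{1/4}$, which gives
\[
\int_{D_4\setminus D_{1/4}}|\nabla u_r|^2 \le \eta, \qquad \|\tau(u_r)\|_{L^2(D_4\setminus D_{1/4})}\le r\sqrt{\eta}.
\]
Taking $\eta<\epsilon_1^2$ with $\epsilon_1$ from Lemma \ref{lem:small-energy-regularity-interior} and covering $D_2\setminus D_{1/2}$ by finitely many unit disks on which that lemma applies (with $p=2$), I obtain
\[
\mathrm{osc}_{D_{2r}\setminus D_{r/2}}u \;=\; \mathrm{osc}_{D_2\setminus D_{1/2}}u_r \;\le\; C(\sqrt{\eta}+r\sqrt{\eta}).
\]
Since $\eta$ is arbitrary, the oscillation on the dyadic annuli tends to $0$ as $r\to 0$, so $u(0):=\lim_{x\to 0}u(x)$ defines a continuous extension whose value lies in $N$ (using that $N$ is compactly embedded).

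With $u$ continuous and bounded on $D$ and $\nabla u\in L^2(D)$, I would use standard logarithmic cutoffs $\eta_\rho$ (equal to $0$ on $D_\rho$, $1$ outside $D_{\sqrt{\rho}}$, log-linear in between, so $\|\nabla\eta_\rho\|_{L^2}=O((\log(1/\rho))^{-1/2})\to 0$) to extend the weak equation $-\Delta u+A(u)(\nabla u,\nabla u)=g$ from $D\setminus\{0\}$ to the whole disk. Both cutoff error terms vanish as $\rho\to 0$ because $u$ is bounded, $\nabla u\in L^2$, and $A(u)(\nabla u,\nabla u)-g\in L^1$. Finally, shrinking $r_0$ once more so that $\|\nabla u\|_{L^2(D_{r_0})}<\epsilon_1$, Lemma \ref{lem:small-energy-regularity-interior} applied on $D_{r_0}$ yields $u\in W^{2,2}(D_{r_0/2})$; combined with the $W^{2,2}_{loc}(D\setminus\{0\})$ hypothesis, we conclude $u\in W^{2,2}(D,N)$.

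The main technical obstacle will be the last step, since Lemma \ref{lem:small-energy-regularity-interior} formally presupposes $u\in W^{2,p}$ a priori. One must either verify that the proof of that lemma in fact starts from distributional solutions (which it does in the usual presentation, via Riesz potential estimates applied to $-\Delta u=A(u)(\nabla u,\nabla u)-g$), or perform a preliminary bootstrap: since the right-hand side has small $L^1$-mass on tiny disks, Calder\'on--Zygmund type estimates in Lorentz spaces improve $\nabla u$ from $L^2$ to $L^q$ for some $q>2$, placing $u$ in $W^{2,p}$ for some $p\in(1,2)$ and unlocking Lemma \ref{lem:small-energy-regularity-interior}. Once that initial foothold is in place, the small-energy estimate closes the argument in a single step.
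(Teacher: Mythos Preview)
The paper does not prove this statement; it is quoted from \cite{Li-Wang} as a preliminary result. Your strategy is the standard one and in fact matches what the paper does for the boundary analogue, Theorem~\ref{thm:remov-sing-boundary}: show that the weak equation extends across the puncture (there the paper cites ``Lemma A.2 in \cite{Jost1991}'', which is precisely your logarithmic-cutoff/capacity argument), and then invoke an interior regularity theorem on a small disk.

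Two comments on the execution. First, your Step~1 (continuous extension) is both unnecessary and, as written, not quite complete: from $\mathrm{osc}_{D_{2r}\setminus D_{r/2}}u\to 0$ you cannot conclude that $\lim_{x\to 0}u(x)$ exists without summability of the dyadic oscillations, and the bound $\mathrm{osc}\le C\|\nabla u\|_{L^2(\text{annulus})}$ is in general not summable over dyadic scales when one only knows $\nabla u\in L^2$. Fortunately you do not need continuity at all. Since $N$ is compact, $u$ is automatically bounded, hence $u\in W^{1,2}(D,N)$ (a point has zero $W^{1,2}$-capacity in dimension two), and your log-cutoff argument in Step~2 goes through verbatim using only $u\in L^\infty$, $\nabla u\in L^2$, and $A(u)(\nabla u,\nabla u)-g\in L^1$.

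Second, for the final step you should appeal directly to Theorem~\ref{thm:regularity-inter.}, which upgrades a weak $W^{1,2}$ solution of $\tau(u)=f\in L^p$ to $W^{2,p}_{loc}$, rather than to Lemma~\ref{lem:small-energy-regularity-interior}, which indeed presupposes $u\in W^{2,p}$. This is exactly the ``underlying regularity for distributional solutions'' that you flag in your last paragraph, and it is already available in the paper; no separate Lorentz-space bootstrap is needed.
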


Next, combining the regularity results for critical elliptical systems with an antisymmetric structure developed by
Rivi{\`e}re  \cite{Ri} and Rivi{\`e}re-Struwe \cite{riviere_struwe} with various extensions in e.g.
\cite{Ri1, Zhu, Sh, Sh_To, schikorra_frames, rupflin, Moser}, we state the following

\begin{thm}\label{thm:regularity-A}
Let $d \geq2$, $0\leq s\leq d$, $0<\Lambda <\infty$ and $1<p<2$. For any $A\in L^{\infty}\cap W^{1,2}(D, GL(d))$,
$\Omega\in L^2(D,so(d)\otimes \wedge^1 \R^m)$, $f\in L^p(D,\R^d)$ and  any $u\in W^{1,2}(D,\R^d)$ weakly solving
\begin{eqnarray}
\dv(A\ed u) &=& \langle \Omega , A \ed u\rangle  + f \quad \text{in}\quad D,  \label{regularity-A}
\end{eqnarray}
with $A$ satisfying
\begin{equation}\label{Abound}
\Lambda^{-1}|\xi| \leq |A(x)\xi| \leq \Lambda |\xi| \,\,\,\,\,\text{for a.e. $x\in D$}, \,\,\,\text{for all $\xi\in \R^d$},
\end{equation}
we have $u\in W^{2,p}_{loc}(D)$ and there exist $\epsilon=\epsilon(d,\Lambda, p)>0$ and $C=C(d,\Lambda, p)>0$ such that whenever
$\|\Om\|_{L^2(D)}+
\|\D A\|_{L^2(D)}\leq \epsilon $ then
$$\|\D^2 u\|_{L^p(D_{\fr12})} + \|\D u\|_{L^{\fr{2p}{2-p}}(D_{\fr12})} \leq C(\| u\|_{L^1(D)}
+ \|f\|_{L^p(D)} ).$$
\end{thm}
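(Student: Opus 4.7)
My plan is to adapt the Coulomb gauge / conservation law approach of Rivi\`ere and Rivi\`ere--Struwe to the present setting with variable coefficient $A$ and an inhomogeneity $f$. The underlying idea is that, although the right-hand side of \eqref{regularity-A} is \emph{a priori} only in $L^1$ (so standard Calder\'on--Zygmund theory fails at the critical exponent), the antisymmetry of $\Omega$ allows one to rewrite the equation in a divergence form whose right-hand side possesses a hidden Jacobian/Hardy-space structure, to which Wente-type estimates apply.

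First, under the smallness hypothesis $\|\Omega\|_{L^2(D)}+\|\nabla A\|_{L^2(D)}\leq \epsilon$, I would construct a gauge transformation $P\in W^{1,2}(D_{r_0},SO(d))$ together with an auxiliary $so(d)$-valued potential $B\in W^{1,2}(D_{r_0})$ satisfying a Coulomb-type relation of the form
\[
P^{-1}\D P + P^{-1}\,\wt\Omega\, P \;=\; *\,\ed B,
\]
where $\wt\Omega$ is the effective antisymmetric $1$-form associated to the $A$-twisted system (it combines $\Omega$ with a contribution coming from $A^{-1}\D A$, which is antisymmetric after a suitable symmetrization because of \eqref{Abound} and the smallness of $\D A$). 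Existence of $(P,B)$ follows by a fixed point/continuity argument of Uhlenbeck--Rivi\`ere type, with norm bounds
$\|\D P\|_{L^2}+\|\D B\|_{L^2}\lesssim \|\Omega\|_{L^2}+\|\D A\|_{L^2}$.

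Next, I would use this gauge to recast \eqref{regularity-A} as a conservation law. Conjugating by $P$, a computation analogous to Rivi\`ere's should give an identity of the schematic form
\[
\dv\!\bigl(P A\,\D u\bigr) \;=\; \n^{\perp}B \cdot (P A\,\D u) \;+\; P f,
\]
so that the quadratic right-hand side is the $L^1$ inner product of a divergence-free vector field $\n^{\perp}B$ and a bounded-energy gradient $PA\,\D u$. By the Coifman--Lions--Meyer--Semmes duality between the Hardy space $\mathcal{H}^1$ and $BMO$ (equivalently, Wente's lemma), this product lies in $\mathcal{H}^1_{\mathrm{loc}}$, and combined with the $L^p$ term coming from $Pf$ the whole right-hand side can be estimated in $L^p$ for $1<p<2$ on slightly smaller discs. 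Solving the resulting Poisson-type equation by splitting (a harmonic part plus a particular solution via Riesz potentials, using the Calder\'on--Zygmund theorem and the embedding $\mathcal{H}^1\hookrightarrow L^{2,1}$ in $2$D), one obtains $\D u\in L^{\frac{2p}{2-p}}_{\mathrm{loc}}$ and $\D^2 u\in L^p_{\mathrm{loc}}$ with the claimed bounds; a standard covering/rescaling argument upgrades local estimates to those stated on $D_{1/2}$, with the $\|u\|_{L^1(D)}$ term absorbing the constants of integration.

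The main obstacle will be the gauge construction in Step 1, because the coefficient $A$ has only $W^{1,2}$ regularity and the natural ``antisymmetric part'' associated with it has to be identified cleanly and controlled by $\|\D A\|_{L^2}$; this is precisely why the smallness hypothesis is imposed on $\|\Omega\|_{L^2}+\|\D A\|_{L^2}$ rather than on $\|\Omega\|_{L^2}$ alone, and why the ellipticity bound \eqref{Abound} enters. A secondary technical point is ensuring that the conservation-law form is genuine, i.e.\ that all non-antisymmetric remainders produced by the conjugation with $P$ and by the presence of $A$ are either lower order or absorbed in the small-norm iteration; this is where the references \cite{Ri,riviere_struwe,Sh,Sh_To,schikorra_frames,rupflin,Moser} cited in the excerpt supply the needed machinery.
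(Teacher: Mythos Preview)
The paper does not give its own proof of this theorem: it is stated in Section~\ref{sec:prelimilary} as a preliminary result taken from the literature, introduced with the phrase ``combining the regularity results for critical elliptical systems with an antisymmetric structure developed by Rivi\`ere \cite{Ri} and Rivi\`ere--Struwe \cite{riviere_struwe} with various extensions in e.g.\ \cite{Ri1, Zhu, Sh, Sh_To, schikorra_frames, rupflin, Moser}, we state the following''. Your sketch via Coulomb gauge, conservation law, and Wente/Hardy-space estimates is precisely the approach of those cited references, so in that sense you are aligned with what the paper relies on.

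One remark on your outline: the ``effective antisymmetric $1$-form $\widetilde\Omega$'' you propose to build from $A^{-1}\nabla A$ is not the way the cited works handle the variable coefficient $A$. In \cite{riviere_struwe} (and in the extensions \cite{Sh, Sh_To}) one does not try to make the $A$-contribution antisymmetric; instead one constructs the gauge $P$ for $\Omega$ alone and then finds an auxiliary matrix $\widehat A\in L^\infty\cap W^{1,2}$ solving $\dv(\widehat A\,\nabla P + \widehat A\,P\,\Omega)=0$ (a perturbation of $PA$), which yields the conservation law $\dv(\widehat A\,\nabla u)=\nabla^\perp B\cdot \widehat A\,\nabla u + \widehat A\,A^{-1}f$. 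The smallness of $\|\nabla A\|_{L^2}$ is used to run the fixed-point argument for $\widehat A$ and to keep it uniformly elliptic, not to extract an antisymmetric piece from $A$. This is a technical but genuine difference from what you wrote; your schematic identity $\dv(PA\,\nabla u)=\nabla^\perp B\cdot(PA\,\nabla u)+Pf$ will in general acquire extra terms involving $\nabla A$ that are not in Jacobian form, and the actual argument absorbs those via the construction of $\widehat A$ rather than by symmetrizing.
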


It is well known that the harmonic map equation can be written as a critical elliptical system with an antisymmetric structure and hence we have the following (which can also be proved by using classical methods developed for the harmonic map case, see e.g. \cite{helein_conservation})

\begin{thm}\label{thm:regularity-inter.}
For every $p\in(1,\infty)$ there exists an $\epsilon>0$ with the following property. Suppose that $u\in W^{1,2}(D;N)$ and $f\in L^p(D;\mathbb{R}^N)$ satisfy
\[
\tau(u)=f \ \ in \ D
\]
weakly, then $u\in W^{2,p}_{loc}(D)$.
\end{thm}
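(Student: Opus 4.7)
The plan is to reduce the statement to the $\epsilon$-regularity theorem for critical antisymmetric elliptic systems (Theorem \ref{thm:regularity-A}), localize using small energy balls, and then bootstrap to any $p\in(1,\infty)$. Since the conclusion is local and $\|\nabla u\|_{L^2(D)}$ is absolutely continuous with respect to the underlying measure, for any $\epsilon>0$ and any $x_0$ in the interior of $D$ I can find $r=r(x_0,\epsilon)>0$ so that $\int_{D_r(x_0)}|\nabla u|^2\,dx\leq\epsilon^2$. By conformal rescaling (which preserves the Dirichlet energy in dimension two and only modifies $f$ by a scaling factor in $L^p$), it suffices to prove the result on $D$ assuming $\|\nabla u\|_{L^2(D)}\leq\epsilon$ for $\epsilon$ as small as needed.

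The second step is to recast the harmonic map equation into Rivi\`ere's form. Using the Nash embedding $N\hookrightarrow\mathbb{R}^N$, the identity $\tau(u)=-\Delta u+A(u)(\nabla u,\nabla u)=f$ becomes
\begin{equation*}
-\Delta u = A(u)(\nabla u,\nabla u)-f \quad\text{in } D.
\end{equation*}
By the classical H\'elein/Rivi\`ere observation, one can decompose $A(u)(\nabla u,\nabla u)=\langle\Omega,\nabla u\rangle$ with an antisymmetric $1$-form $\Omega\in L^2(D,so(N)\otimes\wedge^1\mathbb{R}^2)$ satisfying the pointwise bound $|\Omega|\leq C(N)|\nabla u|$; concretely, using an orthonormal frame of the normal bundle and the orthogonality between $u_\alpha$ and the normal directions, $\Omega^{ij}_\alpha=A^i_{kl}(u)\nabla^\alpha u^k$ minus its transpose in $(i,j)$. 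With this choice, taking $A=\mathrm{Id}_N$ (so $\nabla A\equiv 0$), the equation fits exactly into the framework of Theorem \ref{thm:regularity-A}, with $\|\Omega\|_{L^2(D)}\leq C(N)\|\nabla u\|_{L^2(D)}\leq C(N)\epsilon$. Choosing $\epsilon$ smaller than the threshold of Theorem \ref{thm:regularity-A} yields, for $1<p<2$,
\begin{equation*}
\|\nabla^2 u\|_{L^p(D_{1/2})}+\|\nabla u\|_{L^{2p/(2-p)}(D_{1/2})}\leq C\bigl(\|u\|_{L^1(D)}+\|f\|_{L^p(D)}\bigr),
\end{equation*}
which already establishes $u\in W^{2,p}_{loc}(D)$ when $p\in(1,2)$.

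The third step handles $p\geq 2$ by a standard bootstrap. From the $p<2$ case I can pick any $q\in(1,2)$ close to $2$ and obtain $\nabla u\in L^{2q/(2-q)}_{loc}$, hence $|\nabla u|^2\in L^{s}_{loc}$ for arbitrarily large $s<\infty$. Inserting this into $-\Delta u=A(u)(\nabla u,\nabla u)-f$, the right-hand side lies in $L^p_{loc}$, and the classical Calder\'on-Zygmund $L^p$ estimate for the Laplacian then yields $u\in W^{2,p}_{loc}(D)$.

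The main obstacle, and the reason why such a streamlined argument works, is the availability of Theorem \ref{thm:regularity-A}; the only nontrivial input I need to supply is the antisymmetric rewriting of $A(u)(\nabla u,\nabla u)$, which is a purely algebraic consequence of Nash's embedding and the structure of the second fundamental form. Once the right-hand side is forced into Rivi\`ere's form with a small antisymmetric potential, the rest is essentially Sobolev bootstrap. The smallness condition on $\|\nabla u\|_{L^2}$ is not a restriction since the result is local and the energy distributes over small balls.
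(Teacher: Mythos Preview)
Your proposal is correct and follows precisely the route the paper indicates: the paper does not give a proof of this theorem but states it as a direct consequence of Theorem~\ref{thm:regularity-A} together with the well-known fact that the harmonic map equation can be written as an elliptic system with antisymmetric potential (and mentions H\'elein's approach as an alternative). Your argument supplies exactly these details---the Rivi\`ere rewriting $A(u)(\nabla u,\nabla u)=\langle\Omega,\nabla u\rangle$ with $|\Omega|\le C(N)|\nabla u|$, application of Theorem~\ref{thm:regularity-A} with $A=\mathrm{Id}$ for $1<p<2$, and the bootstrap for $p\ge 2$---and you correctly observe that the smallness hypothesis is no restriction since the conclusion is local.
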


Finally, we recall the classical boundary estimates for the Laplace operator under Neumann boundary condition.

\begin{lem}[see e.g. \cite{Wehrheim}]\label{lem:lp-estimate-neumann}
Let $f\in W^{k,p}(M)$ and $g\in W_\partial^{k,p}(M)$ for some $k\in\mathbb{N}_0$, $1<p<\infty$. Assume that $u\in W^{1,p}(M)$ weakly solves
\begin{align*}
\Delta_M u=f \quad &in \quad M;\\
\frac{\partial u}{\partial \overrightarrow{n}}=g\quad &on \quad \partial M.
\end{align*}
Then $u\in W^{k+2,p}(M)$ is a strong solution. Moreover, there exist constants $C=C(M)>0$ and $C'=C'(M)>0$ such that for all $u\in W^{k+2,p}(M)$
\begin{align*}
\|u\|_{W^{k+2,p}(M)}&\leq C(\|\Delta_M u\|_{W^{k,p}(M)}+\|\frac{\partial u}{\partial \overrightarrow{n}}\|_{W_\partial^{k+1,p}(M)}+\|u\|_{L^p(M)});\\
\|u\|_{W^{k+2,p}(M)}&\leq C'(\|\Delta_M u\|_{W^{k,p}(M)}+\|\frac{\partial u}{\partial \overrightarrow{n}}\|_{W_\partial^{k+1,p}(M)}), \quad if \quad \int_Mu=0.
\end{align*}
\end{lem}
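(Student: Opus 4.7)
The plan is to prove this as a standard Agmon--Douglis--Nirenberg type $L^p$-regularity estimate for the Neumann problem, by localizing and reducing the boundary case to an interior problem via reflection. I would proceed as follows.

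First, I would choose a finite atlas $\{(U_\alpha,\varphi_\alpha)\}$ of $M$ such that each chart is either an interior chart (with $\overline{U_\alpha}\cap\partial M=\emptyset$, $\varphi_\alpha(U_\alpha)=D$) or a boundary chart (with $\overline{U_\alpha}\cap\partial M\neq\emptyset$, $\varphi_\alpha(U_\alpha)=D^+$ and $\varphi_\alpha(U_\alpha\cap\partial M)=\partial^0 D^+$), chosen so that the metric $g$ in boundary charts is a small perturbation of the Euclidean metric (isothermal/normal coordinates) and, crucially, so that $\partial M$ is mapped to a straight segment along which $\overrightarrow{n}$ becomes the coordinate direction $\partial_{x^2}$. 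A subordinate partition of unity $\{\chi_\alpha\}$ then reduces the global estimate to local ones for $\chi_\alpha u$ on $D$ (interior chart) or $D^+$ (boundary chart), modulo lower-order terms coming from commutators $[\chi_\alpha,\Delta_g]$ which are absorbed into the $\|u\|_{L^p(M)}$ term.

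For the interior charts, the required estimate $\|v\|_{W^{k+2,p}(D_{1/2})}\leq C(\|\Delta v\|_{W^{k,p}(D)}+\|v\|_{L^p(D)})$ is the classical Calderón--Zygmund $L^p$ theory for the Poisson equation. For a boundary chart, after flattening the boundary so that $\Delta_g$ becomes a small perturbation of the flat Laplacian and $\partial_{\overrightarrow n}$ becomes $-\partial_{x^2}$, I would extend the function $v$ on $D^+$ to the full disk $D$ by even reflection $\widetilde v(x^1,x^2)=v(x^1,|x^2|)$, after first correcting by a $W^{k+2,p}$-extension $G$ of the Neumann datum $g$ (so that $\partial_{x^2}(v-G)=0$ on $\partial^0 D^+$, allowing a clean even reflection). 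The reflected function then solves $\Delta\widetilde v=\widetilde f$ weakly on $D$ in the distributional sense (one checks this by testing against $\varphi\in C_c^\infty(D)$ and using the vanishing normal-derivative of $v-G$), and the classical interior estimate on $D$ yields the boundary estimate on $D^+_{1/2}$ with a right-hand side controlled by $\|f\|_{W^{k,p}}+\|G\|_{W^{k+2,p}}\leq \|f\|_{W^{k,p}}+\|g\|_{W^{k+1,p}_\partial}+\|v\|_{L^p}$. Higher regularity ($k\geq 1$) is obtained by a standard difference-quotient / bootstrap argument in the tangential direction on $\partial^0 D^+$, using that tangential difference quotients commute with the reflection and preserve the Neumann structure; the normal second derivative is then recovered algebraically from the equation $\partial_{x^2}^2 v = \Delta_g v - \partial_{x^1}^2 v + (\text{lower order})$.

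The first inequality follows from summing the local estimates against $\{\chi_\alpha\}$, absorbing commutator terms into $\|u\|_{L^p(M)}$. For the second inequality under the zero-mean condition $\int_M u=0$, I would argue by contradiction and compactness: if it failed, there would be a sequence $u_n\in W^{k+2,p}(M)$ with $\int_M u_n=0$, $\|u_n\|_{W^{k+2,p}}=1$, and $\|\Delta_g u_n\|_{W^{k,p}}+\|\partial_{\overrightarrow n}u_n\|_{W^{k+1,p}_\partial}\to 0$. By the Rellich--Kondrachov compactness of $W^{k+2,p}\hookrightarrow L^p$, a subsequence converges in $L^p$ to some $u_\infty$; the first inequality then upgrades this to convergence in $W^{k+2,p}$, so $u_\infty$ is a weak solution of $\Delta_g u_\infty=0$ with $\partial_{\overrightarrow n}u_\infty=0$ and $\int_M u_\infty=0$. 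Multiplying by $u_\infty$ and integrating by parts forces $u_\infty$ to be constant, and the mean-zero condition makes it zero, contradicting $\|u_\infty\|_{W^{k+2,p}}=1$.

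The main obstacle is executing the reflection argument cleanly at the level of weak solutions for the perturbed operator $\Delta_g$: one must carefully justify that subtracting the extension $G$ of $g$ yields a function whose even reflection is a genuine $W^{1,p}$-weak solution on the full disk, and that the smallness of the perturbation $\Delta_g-\Delta$ from flattening coordinates can be absorbed via a covering by sufficiently small charts. All other steps are essentially routine once the flat half-space Neumann case is in hand.
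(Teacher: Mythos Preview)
The paper does not prove this lemma; it is stated as a classical boundary estimate and attributed to Wehrheim's monograph via the citation ``see e.g.\ \cite{Wehrheim}''. There is therefore nothing to compare your proposal against in the paper itself.

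Your sketch is a standard and essentially correct route to such an estimate: localize via a partition of unity, handle interior charts by Calder\'on--Zygmund, handle boundary charts by flattening the boundary, subtracting an extension of the Neumann datum, performing an even reflection, and then invoking interior estimates on the full disk; bootstrap in $k$ by tangential difference quotients; and obtain the mean-zero improvement by a Rellich--Kondrachov compactness/contradiction argument. This is exactly the scheme one finds in standard references (including Wehrheim's), so your approach is in line with what the paper is citing rather than different from it. The only point where you should be a bit more careful is the reflection step for the perturbed operator $\Delta_g$: after flattening, the operator generally has variable coefficients, and even reflection does not in general produce a weak solution of the \emph{same} operator on the full disk. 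One usually either (i) chooses boundary normal coordinates so that the cross terms $g^{12}$ vanish on $\partial^0 D^+$ and treats the remaining coefficient mismatch as a lower-order/perturbative error, or (ii) avoids reflection entirely and appeals directly to the Agmon--Douglis--Nirenberg boundary $L^p$-estimates for elliptic operators under the complementing (Neumann) boundary condition. Either fix is routine, but as written your ``small perturbation'' remark should be made precise in this way.
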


\

\section{Regularity at the free boundary}\label{sec:regularity-results}

\

In this section, we will prove a regularity theorem for weak solutions of \eqref{weak-solution} and \eqref{FB} with $F\in L^p(M,\R^N)$ for some $p>1$ where $F(x)\in T_{u(x)}N$ for $a.e.\ x\in M$. As an application, we derive the removability theorem for a local singularity at the free boundary.

\

We first need to define weak solutions of \eqref{weak-solution} and \eqref{FB}.
\begin{defn}
$u\in H^1(M,N)$ is called a weak solution to \eqref{weak-solution} and \eqref{FB} if $u(\partial M)\subset K$ a.e. and
\[
-\int_M\nabla u\cdot\nabla \varphi dvol=\int_M F\cdot\varphi dvol
\]
for any vector field $\varphi\in L^\infty\cap H^1(M,TN)$ that is tangential along $u$ and satisfies the boundary condition $\varphi(x)\in T_{u(x)}K$ for a.e. $x\in\partial M$. We also say $u\in H^1(M,N)$ is a weak solution of \eqref{weak-solution} with free boundary $u(\partial M)$ on $K$.
\end{defn}

For a weakly harmonic map with free boundary ($i.e.\ F=0$), it is shown that the image of  the map is contained in a small tubular neighborhood of $K$ if the energy of the map is small,
see Lemma 3.1 in  \cite{Scheven}. The proof there requires the interior $L^{\infty}$-estimate for the gradient of the map. Here, we extend this localization property
to the more general case of weak solutions of \eqref{weak-solution} with $F\in L^p(D^+)$ for some $1<p\leq 2$ and derive certain oscillation estimate for the solution. In our case,  there is in general
no interior $L^{\infty}$-estimate for the gradient of the map.

\begin{lem}\label{lem:coordinate}
Let $F\in L^p(D^+)$ for some $1<p\leq 2$ and $u\in W^{1,2}(D^+,N)$ be a weak solution of \eqref{weak-solution} with
free boundary $u(\partial^0D^+)$ on $K$. Then there exists positive constants $C=C(p,N)$, $\epsilon_2=\epsilon_2(p,N)$, such that if $\|\nabla u\|_{L^2(D^+)}\leq\epsilon_2$, then
\begin{equation}\label{equation:25}
dist(u(x),K)\leq C(p,N)(\|\nabla u\|_{L^2(D^+)}+\|F\|_{L^p(D^+)})\ for\ all\ x\in D_{1/2}^+.\end{equation} Moreover, we have
\begin{equation}\label{equation:26}
Osc_{D^+_{\frac{1}{4}}}u:=\sup_{x,y\in D^+_{\frac{1}{4}}}|u(x)-u(y)|\leq C(p,N)(\|\nabla u\|_{L^2(D^+)}+\|F\|_{L^p(D^+)}).
\end{equation}
\end{lem}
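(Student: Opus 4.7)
The plan is to first prove the distance estimate \eqref{equation:25} by analyzing $v := \tfrac{1}{2}\,\mathrm{dist}^2(u,K)$, and then to deduce the oscillation estimate \eqref{equation:26} by reflecting $u$ across $\partial^0 D^+$ and applying the interior small--energy regularity (Lemma \ref{lem:small-energy-regularity-interior}).

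For \eqref{equation:25}, I would first fix a smooth nonnegative function $\rho:\R^N\to\R$ that agrees with $\tfrac{1}{2}\,\mathrm{dist}^2(\cdot,K)$ on a fixed tubular neighborhood of $K$ in $\R^N$, so that $D\rho\equiv 0$ on $K$, $|D\rho(y)|\leq C\,\mathrm{dist}(y,K)$, and $|D^2\rho|\leq C$. Setting $v:=\rho\circ u$, the free boundary condition $u(\partial^0 D^+)\subset K$ gives the Dirichlet condition $v\equiv 0$ on $\partial^0 D^+$, and the identity $\partial_\nu v = D\rho(u)\cdot\partial_\nu u$ together with $D\rho|_K = 0$ also gives the Neumann condition $\partial_\nu v = 0$ there. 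Substituting $\Delta u = A(u)(\nabla u,\nabla u) - F$ into $\Delta v = D^2\rho(u)(\nabla u,\nabla u) + D\rho(u)\cdot\Delta u$ yields
\[
|\Delta v| \leq C|\nabla u|^2 + C\sqrt{v}\,|F| \quad\text{in } D^+.
\]
Since both $v$ and $\partial_\nu v$ vanish on $\partial^0 D^+$, the even reflection $\tilde v$ of $v$ across $\partial^0 D^+$ lies in $W^{1,2}(D)$ and weakly solves $\Delta \tilde v = \tilde g$ on $D$ with $|\tilde g| \leq C|\nabla \tilde u|^2 + C\sqrt{\tilde v}\,|\tilde F|$, the tildes denoting even extensions. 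An interior $L^\infty$ bound for $\tilde v$ on $D_{1/2}$, followed by Young's inequality to absorb $\sqrt{\|v\|_\infty}$, then gives \eqref{equation:25}. The main obstacle is extracting this $L^\infty$ bound when the Laplacian datum contains the $L^1$ piece $|\nabla u|^2$, since in dimension two $L^1$ Laplacian data do not in general produce $L^\infty$ potentials; I would handle this by exploiting the smallness of $\|\nabla u\|_{L^2}$ together with the Jacobian/compensated--compactness (Wente--type) structure of the harmonic--map right--hand side $A(u)(\nabla u,\nabla u)$, or alternatively by first upgrading $|\nabla u|^2\in L^q_{\mathrm{loc}}$ for some $q>1$ on interior subdomains via Lemma \ref{lem:small-energy-regularity-interior} and treating a boundary neighborhood of $\partial^0 D^+$ separately.

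Once \eqref{equation:25} is in hand and $\epsilon_2$ is chosen small enough that $u(D^+_{1/2})$ lies in a fixed small tubular neighborhood of $K$ in $N$, the geodesic reflection $\sigma_K$ of $N$ across $K$ is well defined on the image of $u$. I would set $\tilde u(x^1,x^2):=u(x^1,x^2)$ for $x^2\geq 0$ and $\tilde u(x^1,x^2):=\sigma_K(u(x^1,-x^2))$ for $x^2<0$. The orthogonality $du(\overrightarrow{n})\perp T_u K$ is precisely the matching condition that ensures $\tilde u \in W^{1,2}(D_{1/2},N)$, and a direct computation shows that $\tilde u$ weakly solves $-\Delta \tilde u + \tilde A(\tilde u)(\nabla \tilde u,\nabla \tilde u) = \tilde F$ on $D_{1/2}$ for suitably reflected $\tilde A$ and $\tilde F$, with $\|\nabla \tilde u\|_{L^2(D_{1/2})}\leq \sqrt{2}\,\|\nabla u\|_{L^2(D^+)}$ and $\|\tilde F\|_{L^p(D_{1/2})}\leq C\,\|F\|_{L^p(D^+)}$. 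Taking $\epsilon_2\leq \epsilon_1/\sqrt{2}$ and applying Lemma \ref{lem:small-energy-regularity-interior} to $\tilde u$ on $D_{1/2}$ yields
\[
Osc_{D^+_{1/4}}\, u \leq Osc_{D_{1/4}}\, \tilde u \leq C\bigl(\|\nabla u\|_{L^2(D^+)} + \|F\|_{L^p(D^+)}\bigr),
\]
which is \eqref{equation:26}.
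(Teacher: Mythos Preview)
Your approach to \eqref{equation:25} has a genuine gap at exactly the point you flag. The equation you derive for $v=\rho\circ u$ has right--hand side containing $D^2\rho(u)(\nabla u,\nabla u)$, a generic quadratic term carrying no div--curl or antisymmetric structure whatsoever; only the piece $D\rho(u)\cdot A(u)(\nabla u,\nabla u)$ comes from the harmonic--map nonlinearity, and even that loses its structure once contracted with $D\rho(u)$. So the Wente route does not apply. Your alternative---upgrade $|\nabla u|^2$ to $L^q_{loc}$ on interior balls and ``treat a boundary neighborhood separately''---is precisely where the proof lives, and you give no mechanism for the boundary part. The paper's argument is exactly this alternative done right: for each $x_0\in D^+_{1/2}$ one takes $R=\tfrac13\,\mathrm{dist}(x_0,\partial^0 D^+)$, applies the interior estimate (Lemma~\ref{lem:small-energy-regularity-interior}) on $D_{2R}(x_0)$ to get $\|\nabla u\|_{L^q(D_{2R}(x_0))}\leq C R^{-(1-2/q)}(\|\nabla u\|_{L^2}+\|F\|_{L^p})$, and observes that this power of $R$ exactly cancels $\|\nabla G_{x_0}\|_{L^{q'}(D_{2R})}\sim R^{1-2/q}$ in a Green's representation for $|u(x_0)-\bar u|^2$, with $\bar u$ the average over the half--disk $D^+_{5R}(x_1)$ centered at the nearest boundary point. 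A Poincar\'e inequality on that half--disk (using $\mathrm{dist}(u,K)=0$ on $\partial^0 D^+$) then controls $\mathrm{dist}(\bar u,K)$. The scaling match is the key idea your sketch is missing.

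Your route to \eqref{equation:26} via reflection is also problematic as written. Lemma~\ref{lem:small-energy-regularity-interior} requires a $W^{2,p}$ map into $N$ satisfying the harmonic--map equation, but the reflected map $\tilde u$ is a priori only $W^{1,2}$, and on $D^-$ it does \emph{not} satisfy $\tau(\tilde u)=\tilde F$ since $\sigma_K$ is not an isometry of $N$; rather it satisfies an equation of the form $\Delta\tilde u+\Upsilon(\nabla\tilde u,\nabla\tilde u)=\widehat F$ with a different bilinear form (this is the content of Proposition~\ref{prop:-01}, which is nontrivial and in the paper's logic \emph{uses} Lemma~\ref{lem:coordinate}). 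To make your argument work you would need Proposition~\ref{prop:-01} plus Theorem~\ref{thm:regularity-A} to first obtain $\tilde u\in W^{2,p}_{loc}$, and then a version of Lemma~\ref{lem:small-energy-regularity-interior} for the modified equation---essentially reproducing Lemma~\ref{lem:small-energy-regularity}. The paper instead derives \eqref{equation:26} directly and cheaply from the same pointwise bound $|u(x_0)-\bar u|\leq C(\|\nabla u\|_{L^2}+\|F\|_{L^p})$ obtained en route to \eqref{equation:25}, by comparing two such estimates at different points and controlling the difference of the two half--disk averages by Poincar\'e.
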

\begin{proof}
We shall follow the scheme of the proof of Lemma 3.1 in \cite{Scheven}. Take $\epsilon_2=\min\{\epsilon_1,\epsilon\}$ where $\epsilon_1$ and $\epsilon$ are the corresponding constants in Lemma \ref{lem:small-energy-regularity-interior} and Theorem \ref{thm:regularity-inter.}. By the interior regularity result Theorem \ref{thm:regularity-inter.},
we know $u\in W^{2,p}_{loc}(D^+\setminus \partial D^+)$.
For any $x_0\in D_{1/2}^+\setminus \partial^0D^+$, set $R=\frac{1}{3}dist(x_0,\partial^0 D^+)$ and suppose $x_1\in\partial^0D^+$ is the nearest point
to $x_0$, $i.e.$ $|x_0-x_1|=dist(x_0,\partial^0 D^+)=3R$. Let $G_{x_0}$ be the fundamental solution of the Laplace operator
with singularity at $x_0$ which satisfies
\begin{align*}
|\nabla G_{x_0}|\leq C(n)|x-x_0|^{-1} \mbox{ for all } x\in \mathbb{R}^2.
\end{align*}
Setting $w(x)=u(x)-\overline{u}$ where $\overline{u}:=\frac{1}{|D^+_{5R}(x_1)|}\int_{D^+_{5R}(x_1)}udx$ and
 choosing a cut-off function $\eta\in C_0^\infty(D_{2R}(x_0))$ such that $0\leq\eta\leq1$, $\eta|_{D_R(x_0)}\equiv 1$ and $|\nabla\eta|\leq\frac{C}{R}$,
by Green's representation theorem and integrating by parts, we have
\begin{align}\label{inequality:01}
|w(x_0)|^2&=-\int_{D_{2R}(x_0)}\nabla G_{x_0}(x)\nabla(|w|^2\eta^2)dx\notag\\
&\leq
C\int_{D_{2R}(x_0)}|\nabla G_{x_0}(x)||w\nabla w|\eta^2dx+C\int_{D_{2R}(x_0)\setminus D_{R}(x_0)}|\nabla G_{x_0}(x)||w|^2|\nabla\eta|dx\notag\\
&\leq
C\|w\|_{L^\infty(D_{2R}(x_0))}\int_{D_{2R}(x_0)}|\nabla G_{x_0}(x)||\nabla u|dx+CR^{-2}\int_{D_{2R}(x_0)\setminus D_{R}(x_0)}|w|^2dx\notag\\
&\leq
C\|w\|_{L^\infty(D_{2R}(x_0))}\|\nabla G_{x_0}(x)\|_{L^{\frac{q}{q-1}}(D_{2R}(x_0))}\|\nabla u\|_{L^{q}(D_{2R}(x_0))}+CR^{-2}\int_{D_{2R}(x_0)}|w|^2dx\notag\\
&:=\mathbb{I}+\mathbb{II},
\end{align}
where $2<q=\frac{p}{2-p}<\frac{2p}{2-p}$ if $1<p<2$ and $q=4$ if $p=2$.

According to Lemma \ref{lem:small-energy-regularity-interior}, we have
\begin{align}\label{equation:01}
R^{1-\frac{2}{s}}\|\nabla u\|_{L^{s}(D_{2R}(x_0))}+\|u\|_{Osc(D_{2R}(x_0))}
&\leq C(s,p,N)(\|\nabla u\|_{L^2(D_{3R}(x_0))}+R^{1-\frac{1}{p}}\|F\|_{L^p(D_{3R}(x_0))})\notag\\
&\leq C(s,p,N)(\|\nabla u\|_{L^2(D^+)}+\|F\|_{L^p(D^+)})
\end{align}
for any $2<s<\frac{2p}{2-p}$. Thus, we obtain
\begin{align*}
\mathbb{I}&\leq C(p,N)\frac{\|\nabla u\|_{L^2(D^+)}+\|F\|_{L^p(D^+)}}{R^{1-2/q}}\|w\|_{L^\infty(D_{2R}(x_0))}\|\frac{1}{|x-x_0|}\|_{L^{\frac{q}{q-1}}(D_{2R}(x_0))}\\
&\leq C(p,N)(\|\nabla u\|_{L^2(D^+)}+\|F\|_{L^p(D^+)})\|w\|_{L^\infty(D_{2R}(x_0))}\\
&\leq C(p,N)(\|\nabla u\|_{L^2(D^+)}+\|F\|_{L^p(D^+)})(|w(x_0)|+\|u\|_{Osc(D_{2R}(x_0))})\\
&\leq \frac{1}{2}|w(x_0)|^2+C(p,N)(\|\nabla u\|_{L^2(D^+)}+\|F\|_{L^p(D^+)})^2.
\end{align*}
Combining the Poincar\'{e} inequality with the fact $D_{2R}(x_0)\subset D^+_{5R}(x_1)\subset D^+$, we get
\begin{align*}
\mathbb{II}\leq CR^{-2}\int_{D^+_{5R}(x_1)}|w|^2dx\leq C\int_{D^+_{5R}(x_1)}|\nabla u|^2dx.
\end{align*}
So, we have
\begin{equation}\label{equation:27}
|u(x_0)-\overline{u}|\leq C(p,N)(\|\nabla u\|_{L^2(D^+)}+\|F\|_{L^p(D^+)}).
\end{equation}

Set $d(y):=dist(y,K)$ for $y\in N$, then we have $$d(\overline{u})\leq d(u(x))+|u(x)-\overline{u}|.$$ Integrating the above inequality, we get
\begin{align*}
d(\overline{u})&\leq
\frac{1}{|D_{5R}^+(x_1)|}\int_{D_{5R}^+(x_1)}d(u(x))dx
+\frac{1}{|D_{5R}^+(x_1)|}\int_{D_{5R}^+(x_1)}|u(x)-\overline{u}|dx\\
&\leq
C(\int_{D_{5R}^+(x_1)}|\nabla (d(u(x)))|^2dx)^{1/2}
+C(\int_{D_{5R}^+(x_1)}|\nabla u|^2dx)^{1/2}\\
&\leq
C(\int_{D_{5R}^+(x_1)}|\nabla u|^2dx)^{1/2}\leq C\|\nabla u\|_{L^2(D^+)},
\end{align*}
where the second inequality follows from the Poincar\'{e} inequality since $d(u(x))=0$ on $\partial^0D_{5R}^+(x_1)$ and the third inequality follows from the fact that $Lip(d)=1$.

Then, we have
\[
dist(u(x_0),K)\leq dist(\overline{u},K)+|u(x_0)-\overline{u}|\leq C(p,N)(\|\nabla u\|_{L^2(D^+)}+\|F\|_{L^p(D^+)}),
\]
which implies \eqref{equation:25} holds.

For \eqref{equation:26}, taking $x_0=(0,\frac{1}{2})\in D^+_{\frac{1}{2}}\setminus \partial^0D^+$ in \eqref{equation:27}, then $x_1=0$, $R=\frac{1}{3}|x_0-x_1|=\frac{1}{6}$ and we get
\begin{equation}\label{equation:28}
\left|u(0,\frac{1}{2})-\frac{1}{|D^+_{\frac{5}{6}}(0)|}\int_{D^+_{\frac{5}{6}}(0)}udx\right|\leq C(p,N)(\|\nabla u\|_{L^2(D^+)}+\|F\|_{L^p(D^+)}).
\end{equation}
For any $y_0\in D^+_{\frac{1}{4}}\setminus \partial^0D^+$, set $R_{y_0}=\frac{1}{3}dist(y_0,\partial^0 D^+)$ and suppose $y_1\in\partial^0D^+$ is the nearest point
to $y_0$, $i.e.$ $|y_0-y_1|=dist(y_0,\partial^0 D^+)=3R_{y_0}$. Combing \eqref{equation:27} with \eqref{equation:28}, we obtain that
\begin{align*}
\left|u(y_0)-u(0,\frac{1}{2})\right|&\leq \left|u(y_0)-\frac{1}{|D^+_{5R_{y_0}}(y_1)|}\int_{D^+_{5R_{y_0}}(y_1)}udx\right|+ \left|u(0,\frac{1}{2})-\frac{1}{|D^+_{\frac{5}{6}}(0)|}\int_{D^+_{\frac{5}{6}}(0)}udx\right|\\
&\quad+ \left|\frac{1}{|D^+_{5R_{y_0}}(y_1)|}\int_{D^+_{5R_{y_0}}(y_1)}udx-\frac{1}{|D^+_{\frac{5}{6}}(0)|}\int_{D^+_{\frac{5}{6}}(0)}udx\right|\\
&\leq C(p,N)(\|\nabla u\|_{L^2(D^+)}+\|F\|_{L^p(D^+)})+ \left|\frac{1}{|D^+_{5R_{y_0}}(y_1)|}\int_{D^+_{5R_{y_0}}(y_1)}udx-\frac{1}{|D^+_{\frac{5}{6}}(0)|}\int_{D^+_{\frac{5}{6}}(0)}udx\right|.
\end{align*}
Noting that $D^+_{5R_{y_0}}(y_1)\subset D^+_{\frac{5}{6}}(0)$, by a variant of the classical Poincar\'{e} inequality, we have
\begin{align*}
&\left|\frac{1}{|D^+_{5R_{y_0}}(y_1)|}\int_{D^+_{5R_{y_0}}(y_1)}udx-\frac{1}{|D^+_{\frac{5}{6}}(0)|}\int_{D^+_{\frac{5}{6}}(0)}udx\right|\\
&\leq \frac{1}{|D^+_{\frac{5}{6}}(0)|} \int_{D^+_{\frac{5}{6}}(0)}\left|u-\frac{1}{|D^+_{5R_{y_0}}(y_1)|}\int_{D^+_{5R_{y_0}}(y_1)}udx\right|dx\leq C\|\nabla u\|_{L^2(D^+_{\frac{5}{6}}(0))}\leq C\|\nabla u\|_{L^2(D^+_1(0))}.
\end{align*}

Therefore,
\begin{align*}
Osc_{D^+_{\frac{1}{4}}}u:=\sup_{x,y\in D^+_{\frac{1}{4}}}|u(x)-u(y)|
&\leq \left|u(x)-u(0,\frac{1}{2})\right|+\left|u(y)-u(0,\frac{1}{2})\right|\\
&\leq C(p,N)(\|\nabla u\|_{L^2(D^+)}+\|F\|_{L^p(D^+)}).
\end{align*}

Thus, the lemma follows immediately.
\end{proof}

\

With the help of Lemma \ref{lem:coordinate}, we can extend the map to the whole disc $D$ by involuting. Firstly, we consider $1\leq k\leq n-1$. Without loss of generality, we may assume $K\cap \partial N=\emptyset$ in this paper. In fact, if $K\cap \partial N\neq\emptyset$, we  extend the target manifold $N$ smoothly across the boundary to another compact Riemannian manifold $N'$, such that $N\subset N'$ and $K\cap \partial N'=\emptyset$. Then we can consider $N'$ as a new target manifold.

Denote by $K_{\delta_0}$ the $\delta_0$-tubular neighborhood of $K$ in $N$. Taking $\delta_0>0$ small enough, then for any $y\in K_{\delta_0}$,
there exists a unique projection $y'\in K$. Set $\overline{y}=exp_{y'}\{-exp^{-1}_{y'}y\}$. So we may define an involution $\sigma$, $i.e.$ $\sigma^2=Id$ as in
\cite{Hamilton, GJ, Scheven} by
\[
\sigma(y)=\overline{y} \quad for \quad y\in K_{\delta_0}.
\]
Then it is easy to check that the linear operator $D\sigma:TN|_{K_{\delta_0}}\to TN|_{K_{\delta_0}}$ satisfies $D\sigma(V)=V$ for $V\in TK$ and $D\sigma(\xi)=-\xi$ for $\xi\in T^\perp K$.

Let $F\in L^p(D_2^+)$ for some $1<p\leq 2$ and $u\in W^{1,2}(D_2^+,N)$ be a weak solution of \eqref{weak-solution} with
free boundary $u(\partial^0D_2^+)$ on $K$. If $\|\nabla u\|_{L^2(D_2^+)}+\|F\|_{L^p(D_2^+)}\leq\epsilon_3$ where $\epsilon_3=\epsilon_3(p,N,\delta_0)>0$ is small,
by the oscillation estimate \eqref{equation:26} in Lemma \ref{lem:coordinate}, we know
\begin{equation}\label{equation:29}
u(D^+)\subset B^N_{C\epsilon_3}(u(0,\frac{1}{2}))\subset K_{\delta_0},
\end{equation}
where $B^N_{C\epsilon_3}(u(0,\frac{1}{2}))$ is the geodesic ball in $N$ with the center point $u(0,\frac{1}{2})$ and radius $ C\epsilon_3$. Then we can define an extension of $u$ to $D_1(0)$ that
\begin{align}\label{def:function}
\widehat{u}(x)=
\begin{cases}
u(x),\quad &if \quad x\in D^+;\\
\sigma(u(\rho(x))),\quad &if \quad x\in D^-,
\end{cases}
\end{align}
where $\rho(x)=(x^1,-x^2)$ for $x=(x^1,x^2)\in D_1(0)$. For $k=n$, we also use the above extension by replacing $\sigma=Id$. In the following part of this paper, we always state the argument for $1\leq k\leq n-1$, since $k=n$ is similar and easier.

At this point, one can derive the regularity at the free boundary for weak solutions of \eqref{weak-solution} by applying classical methods in \cite{GJ, Scheven} for harmonic maps or the method in \cite{CJWZ, sharpzhu} which combines the method of moving frame and some modification of Rivi\`{e}re-Struwe's method in \cite{riviere_struwe}. Now, we shall give our alternative approach which is also based on some extension of Rivi\`{e}re-Struwe's result.

In order to derive the equation of the involuted map $\widehat{u}$, we shall first define
\begin{align*}
P:B^N_{\delta_1}(u(0,\frac{1}{2}))\subset K_{\delta_0}&\to GL(\mathbb{R}^N,\mathbb{R}^N)=GL(T\mathbb{R}^N,T\mathbb{R}^N)
\end{align*}
by
\begin{align}\label{def:1}
P(y)\xi= D\sigma(y)\xi^{\top}(y)+\sum_{l={n+1}}^N\langle\xi,\nu_l(y)\rangle\nu_l(\sigma(y)),
\end{align}
where $\delta_1=\delta_1(N)$ is small such that $B^N_{4\delta_1}(u(0,\frac{1}{2}))\subset K_{\delta_0}$ and there exists a local orthonormal basis $\{\nu_l\}_{l=n+1}^N$ of the normal bundle $T^{\bot}N|_{B^N_{4\delta_1}(u(0,\frac{1}{2}))}$, $\xi^{\top}(y)$ is the projection map of $\mathbb{R}^N\to T_{y}N$. On one hand, Lemma \ref{lem:coordinate} tells us that $dist(u(0,\frac{1}{2}), K)\leq C\epsilon_3$ which implies $\sigma\left(B^N_{\delta_1}(u(0,\frac{1}{2}))\right)\subset B^N_{4\delta_1}(u(0,\frac{1}{2}))$ if we take $\epsilon_3$ small enough (e.g. $C\epsilon_3\leq \delta_1$). Thus, \eqref{def:1} is well defined. On the other hand,  noting that since \eqref{equation:29} holds, if $\epsilon_3$ is small enough (e.g. $4C\epsilon_3\leq \delta_1$), then we know that $\widehat{u}(D)\subset B^N_{4C\epsilon_3}(u(0,\frac{1}{2}))\subset B^N_{\delta_1}(u(0,\frac{1}{2}))$ and the notations $P(\widehat{u}(x))$, $O(\widehat{u}(x))$ in the sequel (see below) are well defined. It is easy to check that $P(y)$ is invertible linear operator for any $y\in B^N_{\delta_1}(u(0,\frac{1}{2}))$, since the linear operator $D\sigma(y)$ is invertible. For simplicity, we still denote by $P(y)$ the  matrix  corresponding to the linear operator $P(y)$ under the standard orthonormal basis of $\mathbb{R}^N$. Moreover, the matrix $P(y)$ and its inverse matrix $P^{-1}(y)$ are smooth for $y\in B^N_{\delta_1}(u(0,\frac{1}{2}))$. So, there exists an orthogonal matrix $O(y)$ which is also smooth, such that
\begin{align*}
O^TP^TPO=\Xi:=\begin{pmatrix}
\lambda_1(y) & 0 & 0 \\
0 & \ddots & 0 \\
0 & 0 & \lambda_N(y)
\end{pmatrix}
\end{align*}
where $P^T$ is the transposed matrix and $\lambda_i(y)$, $i=1,...,N$ is the eigenvalues of the positive symmetric matrix $P^T(y)P(y)$. It is easy to see that $\lambda_i(y)=1$ for $y\in K$, $i=1,...,N$.

Define
\begin{align*}
\rho'(x)=
\begin{cases}
x,\ x\in D^+;\\
\rho(x),\ x\in D^-,
\end{cases}
\quad and \quad
\sigma'(\widehat{u}(x))=
\begin{cases}
\widehat{u}(x),\ x\in D^+;\\
\sigma(\widehat{u}(x)),\ x\in D^-,
\end{cases}
\end{align*}
and the matrixes
\begin{align*}
Q=Q(x)=
\begin{cases}
Id_{N\times N},\ x\in D^+;\\
P(\widehat{u}(x)),\ x\in D^-,
\end{cases}
\quad and \quad
\widetilde{Q}=\widetilde{Q}(x)=
\begin{cases}
Id_{N\times N},\ x\in D^+;\\
O(\widehat{u})\sqrt{\Xi}(\widehat{u})O^T(\widehat{u}),\ x\in D^-,
\end{cases}
\end{align*}
where
\begin{align*}
\sqrt{\Xi}(y)=\begin{pmatrix}
\sqrt{\lambda_1(y)} & 0 & 0 \\
0 & \ddots & 0 \\
0 & 0 & \sqrt{\lambda_N(y)}
\end{pmatrix}.
\end{align*}
One can easily find that $\widetilde{Q}\in L^\infty\cap W^{1,2}(D,\mathbb{R}^N)$ and is invertible.

\

The involuted map satisfies the following proposition:

\

\begin{prop}\label{prop:-01}
Let $F\in L^p(D_2^+)$ for some $1<p\leq 2$ and $u(x)\in W^{1,2}(D_2^+)$ be a weak solution of \eqref{weak-solution} with free boundary $u(\partial^0D_2^+)$ on $K$. There exists a positive constant $\epsilon_3=\epsilon_3(p,N)$, such that if $\|\nabla u\|_{L^2(D_2^+)}+\|F\|_{L^p(D_2^+)}\leq\epsilon_3$ and $\widehat{u}$ is defined as above, then  $\widehat{u}\in W^{1,2}(D)$ is a weak solution of
\begin{align}\label{equation:diver-form}
 div(\widetilde{Q}\cdot\nabla\widehat{u}(x))=
\Omega\cdot\widetilde{Q}\cdot\nabla\widehat{u}(x)+\widetilde{Q}^{-1}\cdot Q^T\cdot F(\rho'(x)), \ x\in D,
\end{align}
where
\begin{align*}
\Omega(x)=
\begin{cases}
\Omega_2(x),\ x\in D^+;\\
\Omega_1(\widehat{u}(x))+\Omega_2(x)-\widetilde{Q}^{-1}\cdot \frac{1}{2}(Q^T\nabla Q-\nabla Q^TQ)\cdot \widetilde{Q}^{-1},\ x\in D^-,
\end{cases}
\end{align*}
and
\[
\Omega_1=(\Omega_1)_{AB}:=\nabla OO^T+\frac{1}{2}O\sqrt{\Xi}\nabla O^TO\sqrt{\Xi}^{-1}O^T-\frac{1}{2}O\sqrt{\Xi}^{-1}O^T \nabla O\sqrt{\Xi}O^T,
\]
\[
\Omega_2=(\Omega_2)_{AB}:=\widetilde{Q}\cdot Q^{-1}\cdot\nabla\big(\nu_l(\sigma'(\widehat{u}))\big)\cdot \nu^T_l(\widehat{u})\cdot\widetilde{Q}^{-1}-\widetilde{Q}^{-1}\cdot\nu_l(\widehat{u})
\cdot\nabla\big(\nu^T_l(\sigma'(\widehat{u}))\big)\cdot(Q^{-1})^T\cdot\widetilde{Q} ,
\]
in the distribution sense. Here, $\Omega(x)$, $\Omega_1(x)$ and $\Omega_2(x)$ are antisymmetric matrices in $L^2(D)$.

Moreover, if $u\in W^{2,p}(D^+)$, $1<p\leq 2$, then $\widehat{u}\in W^{2,p}(D)$ and satisfies \begin{equation}\label{equation:global-form}
\Delta \widehat{u}+\Upsilon_{\widehat{u}}(\nabla\widehat{u},\nabla\widehat{u})=\widehat{F}\quad in \quad D,
\end{equation}
where $\Upsilon_{\widehat{u}}(\cdot,\cdot)$ is a bounded bilinear form and $\widehat{F}\in L^p(D)$ which are defined by \eqref{def:2}, satisfying
\[
|\Upsilon_{\widehat{u}}(\nabla\widehat{u},\nabla\widehat{u})|\leq C(N)|\nabla\widehat{u}|^2\ \ and \ \ \|\widehat{F}\|_{L^p(D)}\leq C(N)\|F\|_{L^p(D^+)}.
\]

\end{prop}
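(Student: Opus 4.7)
The plan has four main steps. \textbf{Step 1 (Sobolev regularity of $\widehat{u}$).} Because $u(\partial^0 D_2^+)\subset K$ and $\sigma$ fixes $K$, the two pieces in the definition of $\widehat{u}$ agree on the seam $\partial^0 D^+$, so $\widehat{u}\in C^0(D)\cap W^{1,2}(D)$; the gradient bound uses only that $\sigma$ is smooth on $K_{\delta_0}$ and that $\rho$ is an isometry, together with \eqref{equation:29}.

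\textbf{Step 2 (equation on each half-disk).} On $D^+$ I will put $-\Delta u+A(u)(\nabla u,\nabla u)=F$ into divergence form by writing $A(u)(\nabla u,\nabla u)=-\sum_{l}\nu_l(u)\,\langle\nabla\nu_l(u),\nabla u\rangle$ and performing the standard antisymmetrization trick of Rivi\`ere, obtaining $\mathrm{div}(\nabla u)=\Omega_2\cdot\nabla u+F$ with antisymmetric $\Omega_2$. On $D^-$, chain-rule differentiation of $\widehat{u}=\sigma\circ u\circ\rho$ together with the $u$-equation produces a PDE for $\widehat{u}$ which, after analogous manipulation, takes the form $\mathrm{div}(Q\nabla\widehat{u})=\mathcal{R}\cdot Q\nabla\widehat{u}+Q^T F(\rho x)$ with $Q=P(\widehat{u})$, but the resulting potential $\mathcal{R}$ is in general \emph{not} antisymmetric because $Q$ is not orthogonal.

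\textbf{Step 3 (symmetrization and antisymmetric potential).} To cure this I replace $Q$ on the left by its symmetric positive-definite square root $\widetilde{Q}=O\sqrt{\Xi}\,O^T=\sqrt{Q^T Q}$, using the polar decomposition $Q=\widetilde{Q}\,R$ with $R=\widetilde{Q}^{-1}Q$ orthogonal, absorbing $R$ into the potential. A direct computation using $O^T O=Id$, the symmetry of $\sqrt{\Xi}$, and the identity $\nabla\widetilde{Q}=\tfrac{1}{2}\widetilde{Q}^{-1}(Q^T\nabla Q+\nabla Q^T Q)+(\text{antisymmetric})$ splits the discrepancy into a symmetric part that cancels and an antisymmetric remainder which is precisely $\Omega_1-\widetilde{Q}^{-1}\cdot\tfrac{1}{2}(Q^T\nabla Q-\nabla Q^T Q)\cdot\widetilde{Q}^{-1}$, as in the statement; the antisymmetry of $\Omega_1$ and $\Omega_2$ is then verified by inspection (each is of the form $A-A^T$). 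To glue the two half-disk equations into one distributional equation on $D$, I test against $\varphi\in C^\infty_c(D)$ and check that the boundary integrals over $\partial^0 D^+$ coming from the two sides cancel. This uses two facts: $P(y)=Id$ for $y\in K$, so $\widetilde{Q}|_{\partial^0 D^+}=Id$; and the free boundary condition \eqref{FB}, together with $D\sigma=-Id$ on $T^\perp K$, forces $\partial_{x^2}\widehat{u}(x^1,0^-)=\partial_{x^2}u(x^1,0^+)$, so the conormal traces of $\widetilde{Q}\nabla\widehat{u}$ match.

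\textbf{Step 4 ($W^{2,p}$ statement).} If $u\in W^{2,p}(D^+)$, the chain rule gives $\widehat{u}\in W^{2,p}(D^\pm)$ separately; the matching of first normal derivatives established in Step 3 upgrades this to $\widehat{u}\in W^{2,p}(D)$. Expanding $\mathrm{div}(\widetilde{Q}\nabla\widehat{u})=\widetilde{Q}\Delta\widehat{u}+(\nabla\widetilde{Q})\cdot\nabla\widehat{u}$ in \eqref{equation:diver-form} and moving all gradient-quadratic terms to the right-hand side yields \eqref{equation:global-form} with $\Upsilon_{\widehat{u}}(\nabla\widehat{u},\nabla\widehat{u})$ bilinear in $\nabla\widehat{u}$ and $\widehat{F}=\widetilde{Q}^{-1}(\widetilde{Q}^{-1}Q^T F(\rho'x)+\cdots)$; the pointwise bound on $\Upsilon_{\widehat{u}}$ and the $L^p$ bound on $\widehat{F}$ follow from the uniform smoothness of $O,\Xi,P$ on the geodesic ball $B^N_{\delta_1}(u(0,1/2))$ combined with \eqref{equation:26}. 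The main obstacle throughout is the bookkeeping in Step 3: engineering the substitution $Q\to\widetilde{Q}$ so that the potential becomes antisymmetric \emph{and} the gluing in Step 4 goes through without a singular contribution on the seam is not automatic, and it is specifically this choice of the symmetric square root (rather than $Q$ itself) that makes both features work simultaneously.
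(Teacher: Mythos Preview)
Your overall architecture is reasonable, and Step~1 together with the algebraic part of Step~3 (the passage from $Q$ to its symmetric square root $\widetilde{Q}$) is on the right track. However, there is a genuine gap in the \emph{gluing} you propose in Step~3.

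You want to test against $\varphi\in C_c^\infty(D)$, split the integral over $D^+$ and $D^-$, and argue that the boundary contributions on $\partial^0 D^+$ cancel. This presupposes that on each half-disk you may integrate by parts and produce a well-defined conormal trace of $\widetilde{Q}\nabla\widehat{u}$ on the seam. But $u$ is only a $W^{1,2}$ weak solution in the sense of Definition~3.1: the admissible test vector fields on $D^+$ must be tangent to $N$ along $u$ \emph{and} take boundary values in $T_{u}K$ on $\partial^0 D^+$. A generic $\varphi\in C_c^\infty(D)$ restricted to $D^+$ satisfies neither constraint, so you cannot invoke the weak equation against it; and the interior regularity $u\in W^{2,p}_{loc}(D^+\setminus\partial^0 D^+)$ does not by itself furnish a normal trace of $\nabla u$ on $\partial^0 D^+$. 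Your claim that the free boundary condition \eqref{FB} forces $\partial_{x^2}\widehat{u}(x^1,0^-)=\partial_{x^2}u(x^1,0^+)$ is precisely what is \emph{not yet} available at the $W^{1,2}$ level --- \eqref{FB} is the formal Euler--Lagrange condition, encoded weakly only through the restricted test class, and recovering it pointwise is essentially the regularity statement you are trying to prove.

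The paper circumvents this by a device that stays entirely at the weak level. Given a test vector field $V\in L^\infty\cap W_0^{1,2}(D,TN)$, it decomposes $V=V_e+V_a$ into parts symmetric and antisymmetric under $x\mapsto \Sigma(\rho(x))V(\rho(x))$ (Scheven's trick). The key observation is that $V_e|_{\partial^0 D^+}=\Pi_{TK}V\in T_uK$, so $V_e$ \emph{is} an admissible test function for the weak equation on $D^+$; meanwhile the $V_a$ contributions from $D^+$ and $D^-$ cancel by symmetry, without any integration by parts across the seam. This yields \eqref{equation:14} directly; the passage to general $\mathbb{R}^N$-valued $V$ and then to $\widetilde{Q}$ is the paper's Steps~2--3. (Incidentally, $P(y)\neq Id$ for $y\in K$ --- it is an orthogonal reflection --- though your conclusion $\widetilde{Q}|_{\partial^0 D^+}=Id$ is correct since $P^TP|_K=Id$.)

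Your Step~4 also differs from the paper. The paper does \emph{not} obtain \eqref{equation:global-form} by expanding \eqref{equation:diver-form}; it computes $\Delta\widehat{u}$ in $D^-$ directly by two applications of the chain rule to $\sigma\circ\Pi_N\circ u\circ\rho$, which is what produces the specific $\Upsilon_{\widehat{u}}$ and $\widehat{F}$ in \eqref{def:2}. Your expansion would give equivalent but different-looking expressions; this is harmless for the later applications (only the bounds $|\Upsilon|\leq C|\nabla\widehat{u}|^2$ and $\|\widehat{F}\|_{L^p}\leq C\|F\|_{L^p}$ are used downstream), but would not literally match \eqref{def:2}.
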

\begin{proof}
\textbf{Step 1:} Firstly, it is easy to see that $\widehat{u}\in W^{1,2}(D)$. Secondly, we prove that for any arbitrary test vector field $V\in L^\infty\cap W_0^{1,2}(D,TN)$ with $V(x)\in T_{\widehat{u}(x)}N$ for $a.e.$ $x\in D$, there holds
\begin{align}\label{equation:14}
-\int_DQ\cdot\nabla\widehat{u}(x)\cdot \nabla(Q \cdot V)dx=\int_D F(\rho'(x))\cdot Q \cdot Vdx.
\end{align}

Set $\Sigma(x):=D\sigma|_{\widehat{u}(x)}$ for $x\in D$. We decompose $V$ into the symmetric and anti-symmetric part with respect to $\sigma$ as in \cite{Scheven}, $i.e.$ $V=V_e+V_a$, where
\begin{align*}
V_e(x):=\frac{1}{2}\{V(x)+\Sigma(\rho(x))V(\rho(x))\},\
V_a(x):=\frac{1}{2}\{V(x)-\Sigma(\rho(x))V(\rho(x))\}.
\end{align*}

Since $\sigma^2=Id$, we have $\Sigma(x)\Sigma(\rho(x))=Id$. Then,
\[
V_e(\rho(x))=\Sigma(x)V_e(x)\ \ and \ \  V_a(\rho(x))=-\Sigma(x)V_a(x).
\]
Noting $D\sigma:TN|_{K_{\delta_0}}\to TN|_{K_{\delta_0}}$ satisfying $D\sigma(V)=V$ for $V\in TK$ and $D\sigma(\xi)=-\xi$ for $\xi\in T^\perp K$, for any $x\in\partial^0 D^+$, we know
\begin{align*}
V_e(x)=\frac{1}{2}\{V(x)+\Sigma(x)V(x)\}=\Pi_{TK}V(x)\in TK
\end{align*}
where $\Pi_{TK}:TN\to TK$ is the orthogonal projection.

Since $u$ is a weak solution of \eqref{weak-solution} in $D^+$, we have
\begin{align}\label{equation:15}
-\int_{D^+}\nabla u(x)\nabla V_e(x)dx=\int_{D^+}F(x)\cdot V_e(x)dx.
\end{align}
Thus,
\begin{align}\label{equation:16}
-\int_{D^{-}}Q\cdot\nabla\widehat{u}(x)\cdot \nabla(Q \cdot V_e(x))dx&=
-\int_{D^{-}}D\sigma|_{\widehat{u}}\cdot\nabla\widehat{u}(x)\cdot \nabla(D\sigma|_{\widehat{u}} \cdot V_e(x))dx\notag\\
&=-\int_{D^{-}}\nabla(u(\rho(x)))\cdot \nabla(\Sigma(x) \cdot V_e(x))dx\notag\\
&=-\int_{D^{-}}\nabla(u(\rho(x)))\cdot \nabla( V_e(\rho(x)))dx\notag\\
&=-\int_{D^+}\nabla u(x)\nabla V_e(x)dx\notag\\
&=\int_{D^+}F(x)\cdot V_e(x)dx=
\int_{D^-}F(\rho'(x))\cdot Q \cdot V_e(x) dx.
\end{align}
Moreover, there holds
\begin{align}\label{equation:17}
-\int_{D^{-}}Q\cdot\nabla\widehat{u}(x)\cdot \nabla(Q \cdot V_a(x))dx&=
-\int_{D^{-}}D\sigma|_{\widehat{u}}\cdot\nabla\widehat{u}(x)\cdot \nabla(D\sigma|_{\widehat{u}} \cdot V_a(x))dx\notag\\
&=\int_{D^{-}}\nabla(u(\rho(x)))\cdot \nabla( V_a(\rho(x)))dx\notag\\
&=\int_{D^+}\nabla u(x)\nabla V_a(x)dx,
\end{align}
and
\begin{align}\label{equation:11}
\int_D F(\rho'(x))\cdot Q \cdot V_a(x)dx&=\int_{D^+} F(x) \cdot V_a(x)dx+\int_{D^-} F(\rho'(x))\cdot Q \cdot V_a(x)dx\notag\\
&=\int_{D^+} F(x) \cdot V_a(x)dx-\int_{D^-} F(\rho'(x))\cdot V_a(\rho'(x))dx\notag\\
&=\int_{D^+} F(x) \cdot V_a(x)dx-\int_{D^+} F(x) \cdot V_a(x)dx=0.
\end{align}
Then \eqref{equation:15}, \eqref{equation:16}, \eqref{equation:17} and \eqref{equation:11} imply \eqref{equation:14} immediately.

\

\textbf{Step 2:} We claim: for any $V\in L^\infty\cap W_0^{1,2}(D,\mathbb{R}^N)$, there holds
\begin{align}\label{equation:18}
&-\int_DQ\cdot\nabla\widehat{u}(x)\cdot \nabla(Q \cdot V)dx\notag\\&=-\int_{D}\langle Q\cdot\nabla\widehat{u}(x),
\nabla\big(\nu_l(\sigma'(\widehat{u}))\big)\rangle\cdot \langle \nu_l(\widehat{u}), V\rangle dx+\int_D F(\rho'(x))\cdot Q \cdot V dx.
\end{align}

In fact, on the one hand, by \eqref{equation:14}, we get
\begin{align*}
-\int_DQ\cdot\nabla\widehat{u}(x)\cdot \nabla(Q \cdot V)dx=&-\int_DQ\cdot\nabla\widehat{u}(x)\cdot \nabla(Q \cdot V^\top)dx-\int_DQ\cdot\nabla\widehat{u}(x)\cdot \nabla(Q \cdot V^\bot)dx\\
=&\int_D F(\rho'(x))\cdot Q \cdot V^\top dx-\int_DQ\cdot\nabla\widehat{u}(x)\cdot \nabla(Q \cdot V^\bot)dx.
\end{align*}
On the other hand, we have
\begin{align*}
-\int_DQ\cdot\nabla\widehat{u}(x)\cdot \nabla(Q \cdot V^\bot)dx&=
-\int_{D^+}\nabla u(x)\cdot \nabla V^\bot dx-\int_{D^-}Q\cdot\nabla\widehat{u}(x)\cdot \nabla(Q \cdot V^\bot)dx\\
&=\mathbb{I}+\mathbb{II}.
\end{align*}
Computing directly, we have
\begin{align*}
\mathbb{I}&=-\int_{D^+}\nabla u(x)\cdot \nabla(\langle V, \nu_l\rangle \nu_l)dx=-\int_{D^+}\nabla u(x)\cdot \langle V, \nu_l\rangle\nabla \nu_ldx\\&=-\int_{D^+}\langle Q\cdot\nabla\widehat{u}(x),
\nabla\big(\nu_l(\sigma'(\widehat{u}))\big)\rangle\cdot \langle  V, \nu_l(\widehat{u})\rangle dx
\end{align*}
and
\begin{align*}
\mathbb{II}&=-\int_{D^-}Q\cdot\nabla\widehat{u}(x)\cdot \nabla(Q \cdot V^\bot)dx=-\int_{D^-}Q\cdot\nabla\widehat{u}(x)\cdot \nabla\big(Q\cdot \langle V,\nu_l(\widehat{u})\rangle\nu_l(\widehat{u})\big)dx\\
&=-\int_{D^-}Q\cdot\nabla\widehat{u}(x)\cdot \nabla\big( \langle V,\nu_l(\widehat{u})\rangle\nu_l(\sigma'(\widehat{u}))\big)dx\\&=-\int_{D^-}\langle Q\cdot\nabla\widehat{u}(x),
\nabla\big(\nu_l(\sigma'(\widehat{u}))\big)\rangle\cdot \langle V, \nu_l(\widehat{u})\rangle dx.
\end{align*}

Combining these equations, we obtain
\begin{align}
&-\int_DQ\cdot\nabla\widehat{u}(x)\cdot \nabla(Q \cdot V^\bot)dx=-\int_{D}\langle Q\cdot\nabla\widehat{u}(x),
\nabla\big(\nu_l(\sigma'(\widehat{u}))\big)\rangle\cdot \langle \nu_l(\widehat{u}), V\rangle dx.
\end{align}
Thus, we have
\begin{align*}
&-\int_DQ\cdot\nabla\widehat{u}(x)\cdot \nabla(Q \cdot V)dx\\
&=-\int_{D}\langle Q\cdot\nabla\widehat{u}(x),
\nabla\big(\nu_l(\sigma'(\widehat{u}))\big)\rangle\cdot \langle \nu_l(\widehat{u}), V\rangle dx+\int_D F(\rho'(x))\cdot Q \cdot V dx,
\end{align*}
where the equality follows from that $F(\rho'(x))\in T_{u(\rho'(x))}N=T_{\sigma'(\widehat{u})}N$. This is \eqref{equation:18}.

\

\textbf{Step 3:}
In order to prove $\widehat{u}$ is a weak solution of \eqref{equation:diver-form}, take an arbitrary test vector field $V\in L^\infty\cap W_0^{1,2}(D,\mathbb{R}^N)$, since the matrix $\widetilde{Q},\widetilde{Q}^{-1}\in L^\infty\cap W^{1,2}(D,\mathbb{R}^N)$, it is sufficient to prove
\begin{align}\label{equation:22}
-\int_D\widetilde{Q}\cdot\nabla\widehat{u}(x)\cdot \nabla(\widetilde{Q} \cdot V)dx&=\int_{D}\langle \Omega\cdot\widetilde{Q}\cdot\nabla\widehat{u}(x)+\widetilde{Q}^{-1}\cdot Q^T\cdot F(\rho'(x)) ,\widetilde{Q}\cdot V\rangle dx\notag\\&=-\int_{D}\langle \widetilde{Q}\cdot\nabla\widehat{u}(x),
\Omega\cdot\widetilde{Q}\cdot V\rangle dx+\int_DF(\rho'(x))\cdot Q \cdot V dx.
\end{align}

Computing directly, we get
\begin{align*}
&-\int_{D^-}Q\cdot\nabla\widehat{u}(x)\cdot \nabla(Q \cdot V)dx\\
&=-\int_{D^-}\langle Q^TQ\cdot\nabla\widehat{u}(x),\nabla V\rangle dx
-\int_{D^-}\langle \nabla\widehat{u}(x), Q^T\nabla Q \cdot V\rangle  dx\\
&=-\int_{D^-}\langle O\sqrt{\Xi}O^T\cdot\nabla\widehat{u}(x), O\sqrt{\Xi}O^T\cdot\nabla V\rangle dx
-\int_{D^-}\langle\nabla\widehat{u}(x), \frac{1}{2}\nabla (Q^TQ) \cdot V\rangle dx\\
&\quad-\int_{D^-}\langle\nabla\widehat{u}(x), \frac{1}{2}(Q^T\nabla Q-\nabla Q^TQ) \cdot V\rangle dx\\
&=-\int_{D^-}\langle O\sqrt{\Xi}O^T\cdot\nabla\widehat{u}(x), \nabla(O\sqrt{\Xi}O^T\cdot V)\rangle dx-\int_{D^-}\langle\nabla\widehat{u}(x), \frac{1}{2}(Q^T\nabla Q-\nabla Q^TQ) \cdot V\rangle dx\\
&\quad+\int_{D^-}\langle\widetilde{Q}\cdot\nabla\widehat{u}(x), \left(\nabla(O\sqrt{\Xi}O^T)-\widetilde{Q}^{-1}\cdot\frac{1}{2}\nabla (Q^TQ)\right) \cdot V\rangle dx,
\end{align*}
and
\begin{align*}
&\left(\nabla(O\sqrt{\Xi}O^T)-\widetilde{Q}^{-1}\cdot\frac{1}{2}\nabla (Q^TQ)\right) \cdot \widetilde{Q}^{-1}\\&=\nabla OO^T+\frac{1}{2}O\sqrt{\Xi}\nabla O^TO\sqrt{\Xi}^{-1}O^T-\frac{1}{2}O\sqrt{\Xi}^{-1}O^T \nabla O\sqrt{\Xi}O^T:=\Omega_1,
\end{align*}
where $\Omega_1$ is an antisymmetric matrix since $O^TO=OO^T=Id$.

Noting that $Q(x)=\widetilde{Q}(x)$, $x\in D^+$, thus, we have
\begin{align*}
&-\int_{D}Q\cdot\nabla\widehat{u}(x)\cdot \nabla(Q \cdot V)dx\\
&=-\int_D\widetilde{Q}\cdot\nabla\widehat{u}(x)\cdot \nabla(\widetilde{Q} \cdot V)dx-\int_{D^-}\langle\nabla\widehat{u}(x), \frac{1}{2}(Q^T\nabla Q-\nabla Q^TQ) \cdot V\rangle dx\\
&\quad+\int_{D^-}\langle\widetilde{Q}\cdot\nabla\widehat{u}(x), \Omega_1\cdot\widetilde{Q} \cdot V\rangle dx.
\end{align*}

By \eqref{equation:18}, we get
\begin{align}\label{equation:23}
&-\int_D\widetilde{Q}\cdot\nabla\widehat{u}(x)\cdot \nabla(\widetilde{Q} \cdot V)dx\notag\\&=\int_{D^-}\langle\nabla\widehat{u}(x), \frac{1}{2}(Q^T\nabla Q-\nabla Q^TQ) \cdot V\rangle dx
-\int_{D^-}\langle\widetilde{Q}\cdot\nabla\widehat{u}(x), \Omega_1\cdot\widetilde{Q} \cdot V\rangle dx\notag\\&\quad-\int_{D}\langle Q^TQ\cdot\nabla\widehat{u}(x),
Q^{-1}\nabla\big(\nu_l(\sigma'(\widehat{u}))\big)\rangle\cdot \langle \nu_l(\widehat{u}), V\rangle dx+\int_D F(\rho'(x))\cdot Q \cdot V dx.
\end{align}
Noting that $\widetilde{Q}^T=\widetilde{Q}$ and $$\langle \widetilde{Q}\cdot\nabla\widehat{u}(x),\widetilde{Q}^{-1}\cdot\nu_l(\widehat{u})\rangle=0,$$ we have
\begin{align*}
&-\int_{D}\langle Q^TQ\cdot\nabla\widehat{u}(x),
Q^{-1}\nabla\big(\nu_l(\sigma'(\widehat{u}))\big)\rangle\cdot \langle \nu_l(\widehat{u}), V\rangle dx\\
&=-\int_{D}\langle \widetilde{Q}\cdot\nabla\widehat{u}(x),
\widetilde{Q}\cdot Q^{-1}\nabla\big(\nu_l(\sigma'(\widehat{u}))\big)\rangle\cdot \langle \widetilde{Q}^{-1}\cdot\nu_l(\widehat{u}), \widetilde{Q}\cdot V\rangle dx\\
&=-\int_{D}\langle \widetilde{Q}\cdot\nabla\widehat{u}(x),
\Omega_2\cdot\widetilde{Q}\cdot V\rangle dx.
\end{align*}

Thus, \eqref{equation:23} implies
\begin{align*}
&-\int_D\widetilde{Q}\cdot\nabla\widehat{u}(x)\cdot \nabla(\widetilde{Q} \cdot V)dx\notag\\&=\int_{D^-}\langle\widetilde{Q}\cdot\nabla\widehat{u}(x), \frac{1}{2}\widetilde{Q}^{-1}\cdot(Q^T\nabla Q-\nabla Q^TQ) \cdot\widetilde{Q}^{-1}\cdot \widetilde{Q}\cdot V\rangle dx
-\int_{D^-}\langle\widetilde{Q}\cdot\nabla\widehat{u}(x), \Omega_1\cdot\widetilde{Q} \cdot V\rangle dx\notag\\&\quad-\int_{D}\langle \widetilde{Q}\cdot\nabla\widehat{u}(x),
\Omega_2\cdot\widetilde{Q}\cdot V\rangle dx+\int_D F(\rho'(x))\cdot Q \cdot V dx.
\end{align*}

This is \eqref{equation:22}. We proved the first result of the lemma.

\

\textbf{Step 4:} If $u\in W^{2,p}(D^+)$, according to the property of $D\sigma$, it is easy to see $\widehat{u}\in W^{2,p}(D)$ since $u$ satisfies free boundary condition. Computing directly, we have
\begin{align*}
\nabla_{e_\alpha}\widehat{u}(x)&=D\sigma|_{u(\rho(x))}\circ Du|_{\rho(x)}\circ D\rho|_x(e_\alpha)\\
&=D\sigma|_{u(\rho(x))}\circ D\Pi_N|_{u(\rho(x))} \circ Du|_{\rho(x)}\circ D\rho|_x(e_\alpha), \quad x\in D^-,
\end{align*}
where $\Pi_N:N_{\delta_0'}\to N$ is the nearest projection map for some $\delta_0'-$neighborhood of $N$ in $\mathbb{R}^N$.

By direct computing, we obtain
\begin{align*}
\Delta\widehat{u}(x)
&= D^2(\sigma\circ\Pi_N)|_{\sigma(\widehat{u})}(\nabla(u\circ \rho),
\nabla(u\circ \rho))+D\sigma(\sigma(\widehat{u}))\cdot F(\rho(x))\\
&= D^2(\sigma\circ\Pi_N)|_{\sigma(\widehat{u})}(P(\widehat{u})\cdot\nabla\widehat{u}(x),
P(\widehat{u})\cdot\nabla\widehat{u}(x))+P(\sigma(\widehat{u}))\cdot F(\rho(x)).
\end{align*}
Combining this with the fact that $\widehat{u}$ satisfies equation \eqref{weak-solution} in $D^+$,  the equation \eqref{equation:global-form} follows immediately by taking
\begin{align}\label{def:2}
\Upsilon_{\widehat{u}}(\cdot,\cdot)=
\begin{cases}
A(\widehat{u})(\cdot,\cdot)\ in\ D^+,\\
D^2(\sigma\circ\Pi_N)|_{\sigma(\widehat{u})}(P(\widehat{u})\cdot,
P(\widehat{u})\cdot)\ in\ D^-;
\end{cases}
and \quad
\widehat{F}=
\begin{cases}
F(x)\ in\ D^+,\\
P(\sigma(\widehat{u}))\cdot F(\rho(x))\ in\ D^-.
\end{cases}
\end{align}
\end{proof}

Now, applying Theorem \ref{thm:regularity-A}, we derive the following

\begin{thm}\label{thm:3}
Let $F\in L^p(D_2^+)$ for some $p>1$ and $u\in W^{1,2}(D_2^+,N)$ be a weak solution of \eqref{weak-solution}
with free boundary $u(\partial^0D_2^+)$ on $K$. Suppose $\|\nabla u\|_{L^2(D_2^+)}+\|\tau(u)\|_{L^p(D_2^+)}\leq\epsilon_3$,
then $u(x)\in W^{2,p}(D_{\frac{1}{2}}^+)$.
\end{thm}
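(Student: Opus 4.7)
The plan is to reduce the boundary regularity question to an interior regularity question via the reflection/involution construction, and then invoke the regularity theorem for critical elliptic systems with antisymmetric potentials (Theorem \ref{thm:regularity-A}) applied to the extended map.

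First, I would treat the range $1<p\leq 2$ directly. Since $\|\nabla u\|_{L^2(D_2^+)}+\|\tau(u)\|_{L^p(D_2^+)}\leq\epsilon_3$, Proposition \ref{prop:-01} applies (note $F=\tau(u)$ in the weak sense) and extends $u$ to $\widehat{u}\in W^{1,2}(D)$ satisfying the divergence-form system
\[
\dv(\widetilde Q\cdot\nabla\widehat u)=\Omega\cdot\widetilde Q\cdot\nabla\widehat u+\widetilde Q^{-1}Q^{T}F(\rho'(\cdot))
\]
with $\Omega\in L^2(D,so(N))$ antisymmetric. I would then check that the hypotheses of Theorem \ref{thm:regularity-A} are all met. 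The matrix $\widetilde Q$ is a smooth function of $\widehat u$, equal to $Id$ on $D^+$ and to $O\sqrt{\Xi}O^{T}(\widehat u)$ on $D^-$; because $\widehat u(D)$ lies in the small neighborhood $B^{N}_{\delta_1}(u(0,1/2))$ of $K$ (where the eigenvalues $\lambda_i$ of $P^{T}P$ are close to $1$), one obtains a uniform ellipticity bound $\Lambda^{-1}|\xi|\leq|\widetilde Q\xi|\leq\Lambda|\xi|$. The chain rule gives $|\nabla\widetilde Q|+|\Omega|\leq C(N)|\nabla\widehat u|$ pointwise on $D^-$, and a symmetric reflection bound for $\nabla\widehat u$ yields
\[
\|\Omega\|_{L^2(D)}+\|\nabla\widetilde Q\|_{L^2(D)}\leq C(N)\|\nabla u\|_{L^2(D^+)}\leq C(N)\epsilon_3,
\]
which is below the threshold of Theorem \ref{thm:regularity-A} after shrinking $\epsilon_3$. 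The inhomogeneous term satisfies $\|\widetilde Q^{-1}Q^{T}F(\rho'(\cdot))\|_{L^p(D)}\leq C(N)\|F\|_{L^p(D^+)}$. Theorem \ref{thm:regularity-A} then yields $\widehat u\in W^{2,p}_{loc}(D)$, and restricting to $D_{1/2}^+\subset D_{1/2}$ gives $u\in W^{2,p}(D_{1/2}^+)$.

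For $p>2$, I would run a short bootstrap. Since $F\in L^p(D_2^+)\hookrightarrow L^2(D_2^+)$, the previous step (applied with exponent $2$, or with any $q\in(1,2)$ close to $2$) already delivers $u\in W^{2,q}(D_{1/2}^+)$ for some $q$ arbitrarily close to $2$. By Sobolev embedding in dimension two, $\nabla u\in L^{r}(D_{1/2}^+)$ for every $r<\infty$. Applying the second part of Proposition \ref{prop:-01}, $\widehat u\in W^{2,q}(D)$ satisfies the non-divergence equation \eqref{equation:global-form}, so
\[
\Delta\widehat u=-\Upsilon_{\widehat u}(\nabla\widehat u,\nabla\widehat u)+\widehat F,
\]
whose right-hand side lies in $L^{\min(r/2,p)}$. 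Choosing $r$ with $r/2\geq p$, classical interior $L^p$-estimates for the Laplacian on $D$ upgrade $\widehat u$ to $W^{2,p}_{loc}(D)$, giving $u\in W^{2,p}(D_{1/2}^+)$.

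The most delicate step I anticipate is the verification of the smallness assumption for $\Omega$ and $\nabla\widetilde Q$ in $L^2$: one has to use both the smallness of $\|\nabla u\|_{L^2(D^+)}$ and the fact (via the oscillation estimate of Lemma \ref{lem:coordinate}) that $\widehat u$ stays in the small neighborhood of $K$ where the construction of $P$, $O$ and $\sqrt\Xi$ is smooth and well-behaved. The antisymmetry of $\Omega$, which is the essential structural ingredient, has already been extracted in Proposition \ref{prop:-01}; this is precisely what makes Theorem \ref{thm:regularity-A} applicable and what replaces the moving-frame method used in other approaches.
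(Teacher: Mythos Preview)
Your proposal is correct and follows essentially the same route as the paper: extend $u$ via Proposition~\ref{prop:-01} to obtain the divergence-form system with antisymmetric $\Omega$, apply Theorem~\ref{thm:regularity-A} for $1<p<2$, and bootstrap for larger $p$ using the non-divergence equation \eqref{equation:global-form}. The only slight discrepancy is that Theorem~\ref{thm:regularity-A} is stated for $1<p<2$, so the endpoint $p=2$ should be handled by the bootstrap (as the paper does) rather than by a direct application; you effectively acknowledge this in your bootstrap paragraph, so no real change is needed.
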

\begin{proof}
By Proposition \ref{prop:-01}, the extended $\widehat{u} \in W^{1,2}(D, \R^N)$ is a weak solution to a system \eqref{regularity-A} with $A$ satisfying \eqref{Abound}
and with $\Omega$ satisfying $|\Omega|\leq C |\nabla \widehat{u}|$. Then we can apply Theorem \ref{thm:regularity-A} (taking $\epsilon_3$ smaller if necessary) for $1<p<2$ and bootstrap for $p\geq2$ to prove the theorem.

\end{proof}

Moreover, we have

\begin{thm}\label{thm:regularity}
Let $M$ be a compact Riemann surface with smooth boundary $\partial M$, $N$ a compact Riemannian manifold, and $K\subset N$ a smooth submanifold.
Let $F\in L^p(M)$ for some $p>1$, and $u\in H^1(M,N)$ be a weak solution of \eqref{weak-solution} with free boundary $u(\partial M)$ on $K$,
then $u\in W^{2,p}(M)$.
\end{thm}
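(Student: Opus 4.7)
The plan is to reduce the global regularity assertion to the local boundary case already treated in Theorem \ref{thm:3} and patch together with the interior result Theorem \ref{thm:regularity-inter.}, using the compactness of $M$ and the absolute continuity of the energy measure and of $|F|^p\,dx$. At any interior point I would apply Theorem \ref{thm:regularity-inter.} in an isothermal chart, shrinking it so that $\|\nabla u\|_{L^2}$ on it is below the interior smallness threshold (which is possible because $|\nabla u|^2\,dvol_g$ is a finite absolutely continuous Borel measure). This gives $u\in W^{2,p}_{\rm loc}(M\setminus\partial M)$ in a routine manner.

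At a boundary point $x_0\in\partial M$, I would choose a boundary isothermal chart $\phi:U\to D_R^+$ with $\phi(x_0)=0$ and $\phi(U\cap\partial M)=\partial^0 D_R^+$; such coordinates exist near any smooth boundary arc of a $2$-dimensional Riemannian manifold. In these coordinates $\Delta_g=e^{-2\varphi}\Delta$, so $v:=u\circ\phi^{-1}$ weakly solves
\[
-\Delta v+A(v)(\nabla v,\nabla v)=e^{2\varphi}\,F\circ\phi^{-1}=:G\in L^p(D_R^+),
\]
with free boundary $v(\partial^0 D_R^+)\subset K$.

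For $r\in(0,R]$ the rescaled map $w(y):=v(ry/2)$ on $D_2^+$ solves the same form of equation with right-hand side $G_w(y)=(r/2)^2 G(ry/2)$ and inherits the free boundary condition. Two scaling identities make the smallness hypothesis of Theorem \ref{thm:3} available for small $r$: conformal invariance of the two-dimensional Dirichlet integral gives
\[
\|\nabla w\|_{L^2(D_2^+)}=\|\nabla v\|_{L^2(D_r^+)}\longrightarrow 0\quad\text{as }r\to 0
\]
by absolute continuity of $|\nabla v|^2\,dy$, while a direct change of variables yields $\|G_w\|_{L^p(D_2^+)}=(r/2)^{2-2/p}\|G\|_{L^p(D_r^+)}\to 0$ as $r\to 0$, using $p>1$. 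Hence for $r$ small enough Theorem \ref{thm:3} applies and produces $w\in W^{2,p}(D_{1/2}^+)$, which translates into $W^{2,p}$ regularity of $u$ on a neighborhood of $x_0$ in $M$.

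Finally, by compactness of $\partial M$ finitely many such boundary neighborhoods cover a collar of the boundary, and together with the interior $W^{2,p}_{\rm loc}$ regularity and the compactness of $M$, the standard patching of local Sobolev estimates via a smooth finite partition of unity yields $u\in W^{2,p}(M)$. The only mild technical point is the existence of boundary isothermal coordinates for a smooth boundary in dimension two, which is classical; the substantive analytic input is Theorem \ref{thm:3} itself.
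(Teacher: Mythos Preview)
Your argument is correct. The paper itself does not write out a proof of Theorem \ref{thm:regularity}; it simply states the global result immediately after the local boundary estimate Theorem \ref{thm:3} with the phrase ``Moreover, we have,'' leaving the passage from local to global implicit. What you have supplied is exactly the standard covering/rescaling argument that bridges this gap: interior regularity via Theorem \ref{thm:regularity-inter.}, boundary regularity by pulling back to a half-disk in boundary isothermal coordinates and dilating so that both the Dirichlet energy and the $L^p$ norm of the right-hand side fall below $\epsilon_3$, then a finite cover of $\overline{M}$. The scaling identities you record (conformal invariance of $\|\nabla w\|_{L^2}$ and the factor $(r/2)^{2-2/p}$ for $\|G_w\|_{L^p}$, positive since $p>1$) are the right mechanism for meeting the smallness hypothesis of Theorem \ref{thm:3}. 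One cosmetic remark: with the paper's sign convention ($\Delta=\partial_x^2+\partial_y^2$ and $\Delta_g$ the positive operator), the flat-coordinate equation reads $\Delta v + A(v)(\nabla v,\nabla v)=e^{2\varphi}F\circ\phi^{-1}$ rather than with a minus sign, but this does not affect anything in your argument.
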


To end this section, we derive the removability of a local  singularity at the free boundary (see Theorem \ref{thm:remov-sing-interior} for the interior case).

\begin{thm}\label{thm:remov-sing-boundary}
Let $u\in W^{2,p}_{loc}(D^+\setminus\{0\},N)$, $p>1$ be a map with finite energy that satisfies
\begin{align}
&\tau(u)=g\in L^p(D^+,TN),\quad a.e.\ x\in D^+,\label{equation:12}\\
&u(x)\in K,\quad du(x)(\overrightarrow{n})\perp T_{u(x)}K, \quad a.e.\ x\in \partial^0D^+,\label{equation:13}
\end{align}
then $u$ can be extended to a map belonging to $W^{2,p}(D^+,N)$.
\end{thm}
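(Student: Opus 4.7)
The plan is to reduce this free-boundary removable-singularity statement to an interior removable singularity for the involuted map $\widehat u$ introduced in \eqref{def:function}, and then to apply the $\epsilon$-regularity Theorem \ref{thm:regularity-A} on the full disc around $0$. First I would use the finite energy of $u$ to choose $r_0>0$ small enough that $\|\nabla u\|_{L^2(D_{r_0}^+)}+\|g\|_{L^p(D_{r_0}^+)}$ is below the threshold $\epsilon_3$ of Lemma \ref{lem:coordinate} and Proposition \ref{prop:-01}; after rescaling back to unit size, Lemma \ref{lem:coordinate} forces the image of $u$ on $D_{1/2}^+$ into the tubular neighborhood $K_{\delta_0}$, so the involution $\widehat u$ is well-defined on $D_{1/2}\setminus\{0\}$. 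By finite energy and the fact that a point has vanishing $2$-capacity in the plane, $\widehat u\in W^{1,2}(D_{1/2},\R^N)$; away from $0$ it inherits $W^{2,p}_{loc}$ regularity from $u$ together with the free-boundary gluing of Proposition \ref{prop:-01}.

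Next, by Proposition \ref{prop:-01}, $\widehat u$ is a weak solution of the divergence-form antisymmetric system \eqref{equation:diver-form} on the punctured disc $D_{1/2}\setminus\{0\}$, with $\widetilde Q\in L^\infty\cap W^{1,2}$ bounded and invertible, $|\Omega|\le C|\nabla\widehat u|$, and inhomogeneity $\widetilde Q^{-1}Q^T g\circ\rho'\in L^p$. The main step is then to upgrade this to a weak solution on the whole of $D_{1/2}$. For this I would test the punctured equation against $\eta_\epsilon V$ with $V\in C_c^\infty(D_{1/2},\R^N)$ and the standard logarithmic cutoff $\eta_\epsilon$ equal to $0$ on $D_\epsilon$ and to $1$ off $D_{\sqrt\epsilon}$, which satisfies $\|\nabla\eta_\epsilon\|_{L^2(D)}^2\le C/|\log\epsilon|\to 0$. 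All bulk contributions converge to the desired integrals over $D_{1/2}$ by dominated convergence, while the only new boundary-type term is bounded by $C\|\widetilde Q\|_\infty^2\|V\|_\infty\|\nabla\widehat u\|_{L^2(D_{\sqrt\epsilon})}\|\nabla\eta_\epsilon\|_{L^2}\to 0$. Hence \eqref{equation:diver-form} in fact holds weakly across $0$.

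Finally, by the choice of $r_0$ together with the estimates $|\Omega|\le C|\nabla\widehat u|$ and $|\nabla\widetilde Q|\le C|\nabla\widehat u|$, the smallness hypothesis $\|\Omega\|_{L^2(D_{1/2})}+\|\nabla\widetilde Q\|_{L^2(D_{1/2})}\le\epsilon$ of Theorem \ref{thm:regularity-A} is satisfied, yielding $\widehat u\in W^{2,p}_{loc}(D_{1/2})$ for any $1<p<2$; the case $p\ge 2$ follows by the usual bootstrap based on \eqref{equation:global-form} and Sobolev embedding, exactly as at the end of the proof of Theorem \ref{thm:3}. Restricting back to $D_{1/2}^+$ and combining with the hypothesis $u\in W^{2,p}_{loc}(D^+\setminus\{0\})$ via a finite covering argument then gives $u\in W^{2,p}(D^+,N)$ as claimed. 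The principal obstacle in this plan is the rigorous removal of the point singularity for the divergence equation: although conceptually standard in dimension two, one must verify that each coefficient appearing in \eqref{equation:diver-form}, being at worst in $L^\infty$ or $L^p$ with $p>1$, is compatible with the cutoff-and-pass-to-the-limit argument sketched above.
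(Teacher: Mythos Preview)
Your argument is correct and uses the same two ingredients as the paper --- a capacity/logarithmic-cutoff argument to remove the point singularity in the weak formulation, followed by the $\epsilon$-regularity Theorem \ref{thm:regularity-A} --- but you assemble them in a different order. The paper first shows (by the capacity argument, citing Lemma A.2 in \cite{Jost1991}) that $u$ is a weak solution of the free-boundary problem on all of $D^+$ in the sense of Definition 3.1, and then simply quotes Theorem \ref{thm:3} as a black box; the involution, smallness, and application of Theorem \ref{thm:regularity-A} are thus absorbed into that citation. You instead first involute on the punctured half-disc, then perform the capacity argument directly on the divergence-form system \eqref{equation:diver-form}, and finally invoke Theorem \ref{thm:regularity-A} by hand. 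Both routes work; the paper's ordering is more modular since it reuses Theorem \ref{thm:3} without reopening its proof, while your ordering has the mild advantage of making explicit exactly which integrability of the coefficients is being used in the cutoff step. Note only that when you invoke Lemma \ref{lem:coordinate} and Proposition \ref{prop:-01} on the punctured domain you should remark (as you implicitly do) that their proofs are local away from $0$, so they apply verbatim to your situation.
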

\begin{proof}
Applying a similar argument as in Lemma A.2 in \cite{Jost1991}, it is easy to see that $u$ is a weak solution of \eqref{weak-solution} with $F=g$ and
with free boundary $u(\partial^0D^+)$ on $K$. By Theorem \ref{thm:3}, we know $u\in W^{2,p}(D^+_r)$ for some small $r>0$. Thus, $u\in W^{2,p}(D^+)$.
\end{proof}

\

\section{Some basic analytic properties for the free boundary case}\label{sec:basic-lemma}

\

In this section, we will prove some basic lemmas for the free boundary case, such as small energy regularity (near the boundary), gap theorem, Pohozaev identity.

Firstly, we prove a small energy regularity lemma near the boundary.
\begin{lem}\label{lem:small-energy-regularity}
Let $u\in W^{2,p}(D_2^+,N)$, $1<p\leq 2$ be a map with tension field $\tau(u)\in L^p(D_2^+)$ and with free boundary $u(\partial^0D_2^+)$ on $K$.
There exists $\epsilon_4=\epsilon_4(p,N)>0$, such that if $\|\nabla u\|_{L^2(D_2^+)}+\|\tau(u)\|_{L^p(D_2^+)}\leq\epsilon_4$, then
\begin{equation}
\|u-\frac{1}{|D^+|}\int_{D^+}udx\|_{W^{2,p}(D_{1/2}^+)}\leq C(p,N)(\|\nabla u\|_{L^p(D^+)}+\|\tau(u)\|_{L^p(D^+)}).
\end{equation}
Moreover, by the Sobolev embedding $W^{2,p}(\mathbb{R}^2)\subset C^0(\mathbb{R}^2)$, we have
\begin{equation}
\|u\|_{Osc(D_{1/2}^+)}=\sup_{x,y\in D_{1/2}^+}|u(x)-u(y)|\leq C(p,N)(\|\nabla u\|_{L^p(D^+)}+\|\tau(u)\|_{L^p(D^+)}).
\end{equation}
\end{lem}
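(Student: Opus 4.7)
The plan is to apply the interior estimate of Theorem \ref{thm:regularity-A} to the divergence-form PDE satisfied by the involution extension $\widehat u$ produced by Proposition \ref{prop:-01}. First, shrink $\epsilon_4$ to ensure $\epsilon_4 \leq \epsilon_3$, so that Proposition \ref{prop:-01} applies to $u$ on $D_2^+$ with source $F=\tau(u)\in L^p(D_2^+)$. This yields $\widehat u \in W^{2,p}(D,N)$ solving
$$\mathrm{div}(\widetilde Q\,\nabla\widehat u) = \langle \Omega, \widetilde Q\,\nabla\widehat u\rangle + \widetilde Q^{-1}Q^T F(\rho'(x)) \quad\text{in } D,$$
where $\widetilde Q \in L^\infty \cap W^{1,2}(D)$ is uniformly elliptic with $\widetilde Q|_{D^+}=\mathrm{Id}$, and the antisymmetric potential $\Omega$ obeys the pointwise bound $|\Omega| \leq C(N)|\nabla\widehat u|$. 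Since $D\sigma$ is bounded on $K_{\delta_0}$, we also have $\|\nabla\widehat u\|_{L^2(D)} \leq C(N)\|\nabla u\|_{L^2(D_2^+)}$, and hence $\|\Omega\|_{L^2(D)} + \|\nabla\widetilde Q\|_{L^2(D)} \leq C(N)\epsilon_4$.

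Shrinking $\epsilon_4$ once more so that the previous quantity lies below the threshold of Theorem \ref{thm:regularity-A}, I apply that theorem to the shifted map $\widehat u - c$ where $c := \frac{1}{|D|}\int_D \widehat u\,dx$; the shifted map still solves the same divergence-form system, because once $\widehat u$ is fixed, $\widetilde Q$ and $\Omega$ are given coefficients and the equation is linear in the gradient of the unknown. The conclusion of Theorem \ref{thm:regularity-A}, together with $\|\widetilde Q^{-1}Q^TF\circ\rho'\|_{L^p(D)} \leq C\|\tau(u)\|_{L^p(D^+)}$, gives
$$\|\nabla^2\widehat u\|_{L^p(D_{1/2})} + \|\nabla\widehat u\|_{L^{2p/(2-p)}(D_{1/2})} \leq C\bigl(\|\widehat u - c\|_{L^1(D)} + \|\tau(u)\|_{L^p(D^+)}\bigr),$$
and the Poincar\'e inequality on $D$ further bounds $\|\widehat u - c\|_{L^1(D)} \leq C\|\nabla\widehat u\|_{L^1(D)} \leq C\|\nabla u\|_{L^p(D^+)}$.

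Restricting the resulting inequality to $D_{1/2}^+$, where $\widehat u\equiv u$, and combining with the Poincar\'e inequality $\|u - \bar u\|_{L^p(D_{1/2}^+)} \leq C\|\nabla u\|_{L^p(D^+)}$, produces the $W^{2,p}$ bound with the intermediate constant $c$ in place of $\bar u := \frac{1}{|D^+|}\int_{D^+}u$; switching constants costs at most $|c-\bar u|\cdot|D_{1/2}^+|^{1/p}$, which is again controlled by $\|\nabla\widehat u\|_{L^p(D)} \leq C\|\nabla u\|_{L^p(D^+)}$ and absorbed. This gives
$$\|u - \bar u\|_{W^{2,p}(D_{1/2}^+)} \leq C(p,N)\bigl(\|\nabla u\|_{L^p(D^+)} + \|\tau(u)\|_{L^p(D^+)}\bigr),$$
and the oscillation estimate then follows at once from the two-dimensional Sobolev embedding $W^{2,p}(\mathbb R^2)\hookrightarrow C^0(\mathbb R^2)$ applied to $u-\bar u$.

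The main obstacle is conceptual rather than technical: Proposition \ref{prop:-01}'s coefficients $\widetilde Q$ and $\Omega$ are nonlinear functions of $\widehat u$, so a naive shift $\widehat u \mapsto \widehat u - c$ would appear to change them. The resolution is that once Proposition \ref{prop:-01} has produced a concrete $\widehat u$ and the associated functions $\widetilde Q(x), \Omega(x)$, these are henceforth treated as fixed coefficients, and the resulting divergence-form PDE is linear in the unknown and invariant under constant shifts. With that understood, checking the remaining hypotheses of Theorem \ref{thm:regularity-A} (antisymmetry of $\Omega$, uniform ellipticity and $W^{1,2}$ control of $\widetilde Q$, and $L^2$ smallness of $\Omega$ and $\nabla\widetilde Q$) is immediate from Proposition \ref{prop:-01}.
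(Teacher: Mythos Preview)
Your argument is correct for $1<p<2$ and takes a genuinely different route from the paper. Both proofs begin with the involution extension $\widehat u$ of Proposition \ref{prop:-01}, but diverge from there. The paper uses the \emph{strong-form} equation \eqref{equation:global-form}, $\Delta\widehat u+\Upsilon_{\widehat u}(\nabla\widehat u,\nabla\widehat u)=\widehat F$, available because $u$ is already assumed in $W^{2,p}$; it then runs the classical cutoff-and-absorb argument ($\Delta(\eta\widehat u)$ bounded via H\"older and Sobolev, small energy absorbs the quadratic term). You instead use the \emph{divergence-form} equation \eqref{equation:diver-form} with antisymmetric potential and invoke Theorem \ref{thm:regularity-A} as a black box, exactly as the paper does when proving Theorem \ref{thm:3}. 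Your route is more uniform with the regularity theory in Section \ref{sec:regularity-results}, while the paper's route is more self-contained and does not need the Rivi\`ere--Struwe machinery for this quantitative step.

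One small gap: Theorem \ref{thm:regularity-A} is stated only for $1<p<2$, so your argument as written does not cover the endpoint $p=2$ in the lemma. This is easily repaired by the same bootstrap the paper uses: first apply your estimate with $p=4/3$ to obtain an $L^4$ bound on $\nabla u$ over $D^+_{3/4}$, and then run the standard $W^{2,2}$ elliptic estimate on the strong-form equation (or simply re-apply your divergence-form argument with the improved gradient integrability feeding into the right-hand side).
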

\begin{proof}
By Proposition \ref{prop:-01}, we can extend $u$ to $\widehat{u}\in W^{2,p}(D)$ which is defined in $D$ and satisfies
\begin{equation}
\triangle\widehat{u}+\Upsilon_{\widehat{u}}(\nabla\widehat{u},\nabla\widehat{u})=\widehat{F}\quad in \quad D.
\end{equation}
where $F=\tau(u)$ in $D^+$ and $\Upsilon_{\widehat{u}}(\cdot,\cdot)$, $\widehat{F}(x)$ are defined by \eqref{def:2}.

Firstly, we let $1<p<2$. Take a cut-off function $\eta\in C^\infty_0(D)$, such that $0\leq\eta\leq 1$, $\eta|_{D_{3/4}}\equiv 1$ and $|\nabla\eta|\leq C$. Then, we have
\begin{align*}
\Delta(\eta \widehat{u})=\eta\Delta\widehat{u}+2\nabla\eta\nabla\widehat{u}+\widehat{u}\Delta\eta
\leq
C(N)|\nabla\widehat{u}||\nabla(\eta\widehat{u})|
+C(N)(|\nabla\widehat{u}|+|\widehat{u}|+|\widehat{F}|).
\end{align*}
Without loss of generality, we may assume $\frac{1}{|D^+|}\int_{D^+}\widehat{u}dx=\frac{1}{|D^+|}\int_{D^+}udx=0$. By the standard elliptic estimates, Sobolev's embedding, Poincar\'{e}'s inequality and Proposition \ref{prop:-01}, we have
\begin{align*}
\|\eta \widehat{u}\|_{W^{2,p}(D)}&\leq
C(p,N)\||\nabla\widehat{u}||\nabla(\eta\widehat{u})|\|_{L^p(D)}
+C(p,N)(\|\nabla\widehat{u}\|_{L^p(D)}+\|\widehat{u}\|_{L^p(D)}
+\|\widehat{F}\|_{L^p(D)})\\
&\leq
C(p,N)\|\nabla\widehat{u}\|_{L^2(D)}\|\nabla(\eta\widehat{u})\|_{L^{\frac{2p}{2-p}}(D)}
+C(p,N)(\|\nabla\widehat{u}\|_{L^p(D)}
+\|\tau(u)\|_{L^p(D^+)})\\
&\leq
C(p,N)\epsilon_4\|\eta \widehat{u}\|_{W^{2,p}(D)}
+C(p,N)(\|\nabla u\|_{L^p(D^+)}
+\|\tau(u)\|_{L^p(D^+)}),
\end{align*}
where we also used the fact that $\|\nabla\widehat{u}\|_{L^p(D)}\leq C(N)\|\nabla u\|_{L^p(D^+)}$, $1< p\leq 2$.

Taking $\epsilon_4$ sufficiently small, we have
\begin{align*}
\|u\|_{W^{2,p}(D^+_{1/2})}\leq\|\eta \widehat{u}\|_{W^{2,p}(D)}\leq
C(p,N)(\|\nabla u\|_{L^p(D^+)}
+\|\tau(u)\|_{L^p(D^+)}).
\end{align*}

So, we have proved the lemma in the case $1<p<2$.
Next, if $p=2$, one can first derive the above estimate with $p=\frac{4}{3}$. Such an estimate gives a $L^4(D^+_{3/4})-$ bound for $\nabla u$. Then one can apply the $W^{2,2}-$boundary estimate to the equation and get the conclusion of lemma with $p=2$.
\end{proof}

The gap theorem still holds for harmonic maps with free boundary.
\begin{lem}\label{lem:gap-theory}
There exists a constant $\epsilon_5=\epsilon_5(M,N)>0$ such that if $u$ is a smooth harmonic map from
$M$ to $N$ with free boundary on $K$ and satisfying $$\int_M|\nabla u|^2dvol\leq\epsilon_5,$$ then $u$ is a constant map.
\end{lem}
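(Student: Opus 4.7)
The plan is to combine the small-energy regularity of Lemma~\ref{lem:small-energy-regularity-interior} and Lemma~\ref{lem:small-energy-regularity} with a direct integration-by-parts identity on $M$, in which the free boundary condition is precisely what makes the boundary contribution harmless.

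First, I would cover $M$ by finitely many geodesic coordinate discs: interior discs on which Lemma~\ref{lem:small-energy-regularity-interior} applies and boundary half-discs on which Lemma~\ref{lem:small-energy-regularity} applies. Choosing $\epsilon_5$ sufficiently small (smaller than the corresponding $\epsilon_1$ and $\epsilon_4$ thresholds divided by the number of patches), the energy on each patch lies below its small-energy bound, so with $p=2$ and $\tau(u)=0$ each lemma yields
\[
\|u-\bar u_i\|_{W^{2,2}(B_i)}+\mathrm{Osc}_{B_i}(u)\le C\|\nabla u\|_{L^2(2B_i)}
\]
for a local mean $\bar u_i$. Using connectedness of $M$ to align the local means, I obtain a single constant $\bar u\in\R^N$ with $\|u-\bar u\|_{W^{2,2}(M)}+\mathrm{Osc}_M(u)\le C(M,N)\|\nabla u\|_{L^2(M)}$, and then the trace theorem gives $\|\nabla u\|_{L^2(\partial M)}\le C(M,N)\|\nabla u\|_{L^2(M)}$.

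Next, I fix a base point $p_0\in\partial M$, set $v:=u-u(p_0)$, and test the harmonic map equation $\Delta_g u=A(u)(\nabla u,\nabla u)$ against $v$. Green's formula (with the paper's sign convention $\Delta_g=-\mathrm{div}(\nabla)$) yields
\[
\int_M|\nabla u|^2\,dvol_g=\int_M v\cdot A(u)(\nabla u,\nabla u)\,dvol_g+\int_{\partial M} v\cdot\frac{\partial u}{\partial\overrightarrow{n}}\,d\sigma.
\]
For the interior term, $A(u)(\cdot,\cdot)\in(T_uN)^\perp\subset\R^N$ and $u(x),u(p_0)\in N$, so the smoothness of $N$ gives that the component of $v(x)$ normal to $T_{u(x)}N$ satisfies $|v^{\perp N}(x)|\le C|v(x)|^2$; hence this term is at most $C\,\mathrm{Osc}_M(u)^2\|\nabla u\|_{L^2(M)}^2\le C\|\nabla u\|_{L^2(M)}^4$. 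For the boundary integral, on $\partial M$ both $u(x)$ and $u(p_0)$ lie in $K$, so the smoothness of $K\subset\R^N$ likewise gives $|v^{\perp K}(x)|\le C|v(x)|^2$; combining this with the free boundary condition $\frac{\partial u}{\partial\overrightarrow{n}}\perp T_{u(x)}K$, which annihilates the $T_{u(x)}K$-component of $v$, we get
\[
\Big|\int_{\partial M} v\cdot\frac{\partial u}{\partial\overrightarrow{n}}\,d\sigma\Big|\le C\,\mathrm{Osc}_M(u)^2\,\|\nabla u\|_{L^2(\partial M)}\le C\|\nabla u\|_{L^2(M)}^3.
\]

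Putting the two estimates together yields $\|\nabla u\|_{L^2(M)}^2\le C(M,N)\bigl(\|\nabla u\|_{L^2(M)}^3+\|\nabla u\|_{L^2(M)}^4\bigr)$, and choosing $\epsilon_5$ so small that $C(\sqrt{\epsilon_5}+\epsilon_5)\le 1/2$ forces $\|\nabla u\|_{L^2(M)}=0$, whence $u$ is constant. The main obstacle is the boundary integral: the free boundary condition alone only gives orthogonality of $\partial u/\partial\overrightarrow{n}$ to $T_uK$ and would not be enough to control the boundary term; what closes the argument is the additional fact that $u$ actually \emph{lands} in $K$ on $\partial M$, so that the smoothness of $K$ provides the quadratic decay $|v^{\perp K}|\lesssim|v|^2$. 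A secondary point to handle with care is the passage from the patchwise small-energy regularity to the global $W^{2,2}$-bound and the trace estimate on $\partial M$, which is standard once the finite cover is fixed.
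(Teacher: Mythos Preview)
Your argument is correct and is a genuinely different route from the paper's. Both proofs begin the same way, using the interior and boundary small-energy regularity (Lemmas~\ref{lem:small-energy-regularity-interior} and~\ref{lem:small-energy-regularity}) to get a uniform oscillation bound and, in particular, to localize the image of $u$ in a small neighborhood. From there the paper proceeds by passing to Fermi coordinates of $N$ about $K$: in these coordinates the free boundary condition becomes an exact mixed Dirichlet/Neumann condition ($u^j=0$ for $j>k$, $\partial u^i/\partial\overrightarrow{n}=0$ for $i\le k$), and the global elliptic estimate of Lemma~\ref{lem:lp-estimate-neumann} gives
\[
\|\nabla u\|_{W^{1,4/3}(M)}\le C\|\Delta_M u\|_{L^{4/3}(M)}\le C\|\nabla u\|_{L^2(M)}\|\nabla u\|_{L^4(M)}\le C\sqrt{\epsilon_5}\,\|\nabla u\|_{W^{1,4/3}(M)},
\]
from which $\nabla u\equiv 0$ follows for $\epsilon_5$ small. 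Your approach instead stays in ambient coordinates, tests the equation against $u-u(p_0)$, and handles the two nonlinear terms geometrically via the quadratic chord-versus-tangent estimate $|v^{\perp N}|\le C|v|^2$ and $|v^{\perp K}|\le C|v|^2$. The paper's method buys an exact linear boundary condition and hence a one-line bootstrap via Lemma~\ref{lem:lp-estimate-neumann}; yours avoids both the Fermi coordinates and the global elliptic lemma at the cost of a trace estimate on $\partial M$ and the two geometric quadratic bounds. Either way the essential input is the same: small energy forces small oscillation, which in turn makes the nonlinearity (and, in your version, the boundary term) subcritical.
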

\begin{proof}
By Lemma \ref{lem:coordinate}, Lemma \ref{lem:small-energy-regularity} and Lemma \ref{lem:small-energy-regularity-interior}, take any $x_0\in M$,
then we may assume the image of $u$ is contained in a Fermi-coordinate chart $(B_{R_0}^N(u(x_0)),y^i)$ of $N$. Thus, we can rewrite the equation
in the new coordinate as follows:
\begin{eqnarray*}
\begin{cases}
-\Delta_M u+\Gamma(u)(\nabla u,\nabla u)=0, \ in\ M;\\
\frac{\partial u^i(x)}{\partial \overrightarrow{n}}=0,\quad 1\leq i\leq k,\quad u^j(x)=0,\quad k+1\leq j\leq n,\quad x\in\partial M.
\end{cases}
\end{eqnarray*}
where $\Gamma(u)(\nabla u,\nabla u)=g^{\alpha\beta}\Gamma^i_{jk}(u)\frac{\partial u^j}{\partial x^\alpha}\frac{\partial u^k}{\partial x^\beta}\frac{\partial}{\partial y^i}$ and $\Gamma^i_{jk}$ are the Christoffel symbol of $N$ in local coordinates $\{y^i\}_{i=1}^n$.

Without loss of generality, we may assume $\int_{M}u^i=0$, $1\leq i\leq k$. By standard elliptic estimates with Dirichlet boundary condition and Neumann boundary condition (see Lemma \ref{lem:lp-estimate-neumann}), we have
\begin{align*}
\|\nabla u\|_{W^{1,4/3}(M)}&\leq C(M)\|\Delta_M u\|_{L^{4/3}(M)}\\
&\leq C(M,N)\|\nabla u\|_{L^2(M)}\|\nabla u\|_{L^{4}(M)}\\
&\leq C(M,N)\sqrt{\epsilon_5}\|\nabla u\|_{L^{4}(M)}\leq C(M,N)\sqrt{\epsilon_5}\|\nabla u\|_{W^{1,4/3}(M)}.
\end{align*}
If $\epsilon_5$ is small, then $u$ is a constant map.
\end{proof}

Next, we compute the Pohozaev identity which is similar to \cite{Lin-Wang}.
\begin{lem}\label{Pohozaev}
For $x_0\in \partial^0D^+$, let $u(x)\in W^{2,2}(D^+(x_0),N)$ be a map with tension field $\tau(u)\in L^2(D^+(x_0))$
and with free boundary $u(\partial^0D^+)$ on $K$. Then, for any $0<t<1$, there holds
\begin{align}\label{inequality:13}
\int_{\partial^+D_t^+(x_0)}r(|\frac{\partial u}{\partial r}|^2-\frac{1}{2}|\nabla u|^2)=\int_{D_t^+(x_0)}r\frac{\partial u}{\partial r}\tau dx
\end{align}
where $(r,\theta)\in (0,1)\times (0,\pi)$ are the polar coordinates at $x_0$.
\end{lem}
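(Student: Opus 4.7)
The identity is a standard Pohozaev identity adapted to the half-disk, and the only genuinely new ingredient is the vanishing of the flat-boundary contribution, which is exactly where the free boundary condition \eqref{equation:13} enters. My plan is to use the vector field $X=x\cdot\nabla u=r\,\partial_r u$ (in coordinates centered at $x_0$) as a test object against the tension-field equation, and then isolate the $\partial^+ D_t^+(x_0)$ piece by showing that the integral over $\partial^0 D_t^+(x_0)$ vanishes.

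\textbf{Step 1 (pair the equation with $x\cdot\nabla u$).} Since $\Delta_g=-\partial_{x^1}^2-\partial_{x^2}^2$ in the Euclidean model, \eqref{HM} reads $\tau(u)=\Delta u+A(u)(\nabla u,\nabla u)$ on $D_t^+(x_0)$, where $\Delta$ now denotes the usual Laplacian. Take the Euclidean inner product with $x\cdot\nabla u$. Because $A(u)(\nabla u,\nabla u)\perp T_uN$ while $\partial_\alpha u\in T_uN$, the second-fundamental-form term drops out and one is left with
\[
\int_{D_t^+(x_0)} \Delta u\cdot(x\cdot\nabla u)\,dx=\int_{D_t^+(x_0)} \tau(u)\cdot(x\cdot\nabla u)\,dx=\int_{D_t^+(x_0)} r\,\partial_r u\cdot\tau(u)\,dx,
\]
using $x\cdot\nabla u=r\,\partial_r u$ in polar coordinates at $x_0$. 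The $W^{2,2}$ regularity of $u$ gives enough traces to justify all the forthcoming integrations by parts.

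\textbf{Step 2 (integrate by parts).} Using $\partial_\beta(x^\alpha\partial_\alpha u)=\partial_\beta u+x^\alpha\partial_\beta\partial_\alpha u$ and $\partial_\beta u\cdot\partial_\beta\partial_\alpha u=\tfrac12\partial_\alpha|\nabla u|^2$, together with $\mathrm{div}\,x=2$ in dimension two, one readily computes
\[
\int_{D_t^+(x_0)}\Delta u\cdot(x\cdot\nabla u)\,dx=\int_{\partial D_t^+(x_0)}\Bigl(\partial_n u\cdot(x\cdot\nabla u)-\tfrac12(x\cdot n)|\nabla u|^2\Bigr)\,ds.
\]
The boundary $\partial D_t^+(x_0)$ splits as $\partial^+ D_t^+(x_0)\cup\partial^0 D_t^+(x_0)$. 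On $\partial^+ D_t^+(x_0)$ we have $n=x/r$, so $x\cdot n=r$, $\partial_n u=\partial_r u$, $x\cdot\nabla u=r\,\partial_r u$, and the integrand becomes $r(|\partial_r u|^2-\tfrac12|\nabla u|^2)$, which is exactly the left-hand side of \eqref{inequality:13}.

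\textbf{Step 3 (the flat boundary contribution vanishes — this is the essential point).} On $\partial^0 D_t^+(x_0)\subset\{x^2=0\}$ the outward unit normal is $n=(0,-1)$, so $x\cdot n=-x^2=0$, and the second term in the boundary integral disappears. For the first term, $\partial_n u=-\partial_{x^2}u=-du(\overrightarrow n)$ (with $\overrightarrow n$ the outward normal of $M$ as in \eqref{FB}), while $x\cdot\nabla u=(x^1-x_0^1)\,\partial_{x^1}u$. Now $\partial_{x^1}u$ is a tangential derivative along $\partial^0 D^+\subset\partial M$, hence takes values in $T_uK$ (differentiate the constraint $u|_{\partial^0}\in K$), whereas \eqref{equation:13} gives $du(\overrightarrow n)\perp T_uK$. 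Therefore $\partial_n u\cdot(x\cdot\nabla u)=0$ pointwise a.e.\ on $\partial^0 D_t^+(x_0)$, and the flat boundary contribution vanishes.

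\textbf{Main obstacle.} Analytically there is nothing hard; the only care needed is to make sure the traces of $\partial_\alpha u$ on $\partial D_t^+(x_0)$ are well-defined (ensured by $u\in W^{2,2}$, so $\partial_\alpha u\in W^{1,2}$ has an $L^2$ trace) and that the orthogonality argument on $\partial^0$ is valid almost everywhere, which is precisely the content of \eqref{equation:13}. A density/approximation argument (approximating $u$ by smooth maps respecting the constraint) can be used to make the pointwise orthogonality step fully rigorous if needed. Combining Steps~1--3 yields \eqref{inequality:13}.
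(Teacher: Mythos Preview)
Your proof is correct and follows essentially the same approach as the paper: multiply the tension-field equation by $(x-x_0)\cdot\nabla u$, drop the second fundamental form term by orthogonality, integrate by parts, and observe that the flat-boundary contribution vanishes. In fact your Step~3 spells out more carefully than the paper does why the $\partial^0 D_t^+(x_0)$ term disappears---the paper silently passes from $\partial(D_t^+(x_0))$ to $\partial^+(D_t^+(x_0))$ without isolating the free boundary orthogonality that you make explicit.
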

\begin{proof}
Since $u(x)$ satisfies the equation
\[
\tau=\Delta u+A(u)(\nabla u,\nabla u)  \quad a.e.\ x\in D^+(x_0)
\]
with the free boundary $u(\partial^0D^+)$ on $K$, multiplying $(x-x_0)\nabla u$ to both sides of the above equation and integrating by parts, for any $0<t<1$, we get
\bee
&&\int_{D_t^+(x_0)}\tau\cdot((x-x_0)\nabla u)dx\notag\\
&=&\int_{D_t^+(x_0)}\Delta u\cdot((x-x_0)\nabla u)dx\notag\\
&=&\int_{\partial (D_t^+(x_0))}\frac{\partial u}{\partial n}\cdot((x-x_0)\nabla u)-\int_{D_t^+(x_0)}\nabla_{e_\alpha}u\cdot\nabla_{e_\alpha}((x-x_0)\nabla u)dx\notag\\
&=&\int_{\partial^+ (D_t^+(x_0))}\frac{\partial u}{\partial n}\cdot((x-x_0)\nabla u)-\int_{D_t^+(x_0)}|\nabla u|^2dx-\frac{1}{2}\int_{D_t^+(x_0)}(x-x_0)\cdot\nabla|\nabla u|^2dx\notag\\
&=&\int_{\partial^+ (D_t^+(x_0))}\frac{\partial u}{\partial n}\cdot((x-x_0)\nabla u)-\frac{1}{2}\int_{\partial (D_t^+(x_0))}\langle x-x_0,\overrightarrow{n}\rangle|\nabla u|^2\notag\\
&=&\int_{\partial^+ (D_t^+(x_0))}\frac{\partial u}{\partial n}\cdot((x-x_0)\nabla u)
-\frac{1}{2}\int_{\partial^+ (D_t^+(x_0))}\langle x-x_0,\overrightarrow{n}\rangle|\nabla u|^2\notag\\
&=&\int_{\partial^+ (D_t^+(x_0))}r\left(|\frac{\partial u}{\partial r}|^2-\frac{1}{2}|\nabla u|^2\right),\notag
\eee
where the last second equality follows from the fact that $\langle x-x_0,\overrightarrow{n}\rangle=0$ on $\partial^0D_t^+(x_0)$. Then the conclusion of lemma follows immediately.
\end{proof}

\begin{cor}\label{cor:01}
Under the assumptions of Lemma \ref{Pohozaev}, we have
\begin{align*}
\int_{D_{2t}^+(x_0)\setminus D_{t}^+(x_0)}(|\frac{\partial u}{\partial r}|^2-\frac{1}{2}|\nabla u|^2)dx\leq t\|\nabla u\|_{L^2(D^+(x_0))}\|\tau \|_{L^2(D^+(x_0))}.
\end{align*}
\end{cor}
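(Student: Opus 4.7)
The plan is to integrate the boundary Pohozaev identity in the radial direction and then apply Cauchy--Schwarz.

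First, I would apply Lemma \ref{Pohozaev} with the radius replaced by an arbitrary $s\in(0,1)$. Since $r=s$ on $\partial^+D_s^+(x_0)$, the identity can be rewritten as
\[
\int_{\partial^+D_s^+(x_0)}\Big(\big|\tfrac{\partial u}{\partial r}\big|^2-\tfrac12|\nabla u|^2\Big)\,d\sigma \;=\; \frac{1}{s}\int_{D_s^+(x_0)} r\,\tfrac{\partial u}{\partial r}\,\tau\,dx.
\]
Next, using polar coordinates centered at $x_0$, the annular region factors as $D_{2t}^+(x_0)\setminus D_t^+(x_0)=\{(r,\theta):t<r<2t,\ 0<\theta<\pi\}$, and the area element $dx=r\,dr\,d\theta$ matches the arclength element $d\sigma=s\,d\theta$ on the semicircle $\partial^+D_s^+(x_0)$. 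So the annular integral on the left-hand side of the corollary equals
\[
\int_t^{2t}\!\!\int_{\partial^+D_s^+(x_0)}\Big(\big|\tfrac{\partial u}{\partial r}\big|^2-\tfrac12|\nabla u|^2\Big)\,d\sigma\,ds,
\]
which by the reformulated Pohozaev identity equals $\displaystyle\int_t^{2t}\frac{1}{s}\int_{D_s^+(x_0)} r\,\tfrac{\partial u}{\partial r}\,\tau\,dx\,ds$.

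Finally, I would estimate this double integral crudely. Inside $D_s^+(x_0)$ one has $r\le s$ and $\big|\tfrac{\partial u}{\partial r}\big|\le|\nabla u|$, so the factor $r/s\le 1$ cancels and the integrand is bounded by $|\nabla u||\tau|$. Enlarging the domain $D_s^+(x_0)\subset D^+(x_0)$ and then computing $\int_t^{2t}ds=t$ gives the bound
\[
t\int_{D^+(x_0)}|\nabla u||\tau|\,dx,
\]
after which Cauchy--Schwarz in $L^2(D^+(x_0))$ yields the desired inequality. There is essentially no obstacle here; the only care needed is the polar-coordinate bookkeeping to ensure that the $r$-factor from $dx$ combines correctly with the $1/s$ from rewriting the Pohozaev identity so that the estimate $r\le s$ produces the clean factor $t=\int_t^{2t}ds$ on the right-hand side.
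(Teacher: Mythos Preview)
Your argument is correct and matches the paper's proof essentially verbatim: the paper also divides the Pohozaev identity by the radius to obtain $\int_{\partial^+D_s^+(x_0)}(|\partial_r u|^2-\tfrac12|\nabla u|^2)=\int_{D_s^+(x_0)}\tfrac{r}{s}\,\partial_r u\cdot\tau\,dx$, bounds the right side by $\|\nabla u\|_{L^2(D^+(x_0))}\|\tau\|_{L^2(D^+(x_0))}$ via $r/s\le 1$ and Cauchy--Schwarz, and then integrates in $s$ from $t$ to $2t$. The only cosmetic difference is the order in which you integrate and estimate.
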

\begin{proof}
From Lemma \ref{Pohozaev}, we have
\begin{align*}
\int_{\partial^+D_t^+(x_0)}(|\frac{\partial u}{\partial r}|^2-\frac{1}{2}|\nabla u|^2)=\int_{D_t^+(x_0)}\frac{r}{t}\frac{\partial u}{\partial r}\tau dx\leq \|\nabla u\|_{L^2(D^+(x_0))}\|\tau \|_{L^2(D^+(x_0))}.
\end{align*}
Integrating from $t$ to $2t$, we will get the conclusion of the corollary.
\end{proof}

\

\section{Energy identity and no neck property} \label{sec:energy-identity}

\

In this section, we shall prove our main Theorem \ref{main:thm-1}.

We first consider the following simpler case of a single boundary blow-up point.

\begin{thm}\label{thm:1}
Let $u_n \in W^{2,2}(D_1^+(0),N)$ be a sequence of maps with tension fields $\tau(u_n)$
and with free boundaries $u_n(\partial^0D^+)$ on $K$ and satisfying
\begin{itemize}
\item[(a)]  $\ \|u_n\|_{W^{1,2}(D^+)}+\|\tau(u_n)\|_{L^{2}(D^+)}\leq \Lambda,$ \\
\item[(b)] $\ u_n\to u \mbox{ strongly in }W_{loc}^{1,2}(D^+\setminus\{0\},\mathbb{R}^N)\ as\ n\to\infty,$ \\
\item[(c)]  $\ u_n(x)\in K,\quad du_n(x)(\overrightarrow{n})\perp T_{u_n(x)}K, \quad x\in \partial^0 D^+$.
\end{itemize}
Then there exist a subsequence of $u_n$ (still denoted by $u_n$) and a nonnegative integer $L$ such that, for any $i=1,...,L$, there exist a point $x^i_n$,
positive numbers $\lambda^i_n$ and a nonconstant harmonic sphere $w^i$ or a nonnegative constant $a^i$ and
a nonconstant harmonic disk $w^i$ (which we view as a map from $\mathbb{R}_{a^i}^2\cup\{\infty\}\to N$) with free boundary
$w^i(\partial \mathbb{R}_{a^i}^2)$ on $K$ such that:
\begin{itemize}
\item[(1)]  $\ x^i_n\to 0,\ \lambda^i_n\to 0$, as $n\to\infty$;\\
\item[(2)]  $\ \frac{dist(x^i_n,\partial^0D^+)}{\lambda^i_n}\to a^i$ or $\frac{dist(x^i_n,\partial^0D^+)}{\lambda^i_n}\to\infty\ (i.e. \ a^i=\infty)$, as $n\to\infty$;\\
\item[(3)]  $\ \lim_{n\to\infty}\big(\frac{\lambda^i_n}{\lambda^j_n}+\frac{\lambda^j_n}{\lambda^i_n} +\frac{|x^i_n-x^j_n|}{\lambda^i_n+\lambda^j_n}\big)=\infty$ for any $i\neq j$;\\
\item[(4)] $\ w^i$ is the weak limit of $u_n(x^i_n+\lambda^i_nx)$ in $W^{1,2}_{loc}(\mathbb{R}^2)$, if $\frac{dist(x^i_n,\partial^0D^+)}{\lambda^i_n}\to\infty$
or $w^i$ is the weak limit of $u_n(x^i_n+\lambda^i_nx)$ in $W^{1,2}_{loc}(\mathbb{R}_{a^i}^{2+})$, if $\frac{dist(x^i_n,\partial^0D^+)}{\lambda^i_n}\to a^i$;\\

\item[(5)]  \textbf{Energy identity:} we have
\begin{equation}
\lim_{n\to\infty}E(u_n,D^+)=E(u,D^+)+\sum_{i=1}^LE(w^i).
\end{equation} \\

\item[(6)]  \textbf{No neck property:} The image
\begin{align}
u(D^+)\cup\bigcup_{i=1}^Lw^i(\mathbb{R}^2_{a^i})
\end{align}
is a connected set, where $w^i(\mathbb{R}^2_{a^i})=w^i(\mathbb{R}^2)$,  if $\frac{dist(x^i_n,\partial^0D^+)}{\lambda^i_n}\to\infty$.
\end{itemize}
\end{thm}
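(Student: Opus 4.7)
My approach follows the classical Sacks--Uhlenbeck/Ding--Tian bubbling scheme, adapted to the free boundary setting using the tools from Sections~\ref{sec:regularity-results}--\ref{sec:basic-lemma}. The plan has three stages: a point-picking bubble extraction that is iterated finitely many times, combined with a careful neck analysis using the boundary Pohozaev identity (Corollary~\ref{cor:01}) and the small-energy regularity at the free boundary (Lemma~\ref{lem:small-energy-regularity}) applied to the involuted extension $\widehat{u}_n$ of Proposition~\ref{prop:-01}.

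For bubble extraction, I would choose $x_n\in D^+$ and $\lambda_n\searrow 0$ achieving
\[
\int_{D_{\lambda_n}(x_n)\cap D^+}|\nabla u_n|^2\, dx = \sup_{y\in D^+}\int_{D_{\lambda_n}(y)\cap D^+}|\nabla u_n|^2\, dx = \tfrac{1}{2}\epsilon_4^2,
\]
with $\epsilon_4$ from Lemma~\ref{lem:small-energy-regularity}. Setting $v_n(x):=u_n(x_n+\lambda_n x)$, two cases arise depending on $a_n:=\mathrm{dist}(x_n,\partial^0 D^+)/\lambda_n$. If $a_n\to\infty$, the rescaled tension fields $\lambda_n\tau(u_n)(x_n+\lambda_n\cdot)\to 0$ in $L^2_{\mathrm{loc}}$, $v_n\to w^1$ on $\R^2$, and $w^1$ extends to a nonconstant harmonic sphere by Theorem~\ref{thm:remov-sing-interior}. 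If $a_n\to a^1\in[0,\infty)$, then $v_n$ is defined on $\R^2_{a_n}$ with free boundary on $K$ and converges to a finite-energy harmonic map $w^1:\R^2_{a^1}\to N$ with free boundary on $K$; Theorem~\ref{thm:remov-sing-boundary} extends it to a nonconstant harmonic disk. Nontriviality is forced by the normalization $\tfrac{1}{2}\epsilon_4^2$. After subtracting this first bubble, the induction proceeds on renormalized sequences; the process terminates after finitely many steps because each bubble carries at least the gap energy $\epsilon_5$ of Lemma~\ref{lem:gap-theory}. The separation property (3) follows from the standard argument that two bubbles at comparable scales would violate the concentration normalization.

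The main difficulty is the neck analysis, items (5) and (6). The neck $N_n(\delta,R):=(D_\delta^+\setminus D_{R\lambda_n}(x_n))\cap D^+$ is, when $x_n\to 0\in\partial^0D^+$, not a round half-annulus but an irregular half-annular region. I would decompose $N_n(\delta,R)$ into three pieces: (a) the subset lying at distance $\ge c\delta$ from $\partial^0 D^+$, handled by classical interior neck analysis; (b) a regular half-annulus $D_R^+(\widetilde x_n)\setminus D_{r}^+(\widetilde x_n)$ centered at a carefully chosen boundary point $\widetilde x_n\in\partial^0D^+$ which absorbs the genuine free-boundary portion; and (c) a bounded transition region of controlled diameter whose energy is small by Lemma~\ref{lem:small-energy-regularity}. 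On the dyadic rings $P_k:=D_{2^{k+1}r}^+(\widetilde x_n)\setminus D_{2^k r}^+(\widetilde x_n)$ of piece (b), Corollary~\ref{cor:01} gives
\[
\int_{P_k}\left(\left|\tfrac{\partial u_n}{\partial r}\right|^2 - \tfrac{1}{2}|\nabla u_n|^2\right) dx \le 2^{k+1}r\,\|\nabla u_n\|_{L^2}\,\|\tau(u_n)\|_{L^2} =: \eta_k,
\]
with $\sum_k \eta_k \to 0$. To handle the angular part I would pass to the involuted map $\widehat u_n$ on the full annulus, where Proposition~\ref{prop:-01} writes the equation in the antisymmetric-potential form; a Fourier expansion on circles together with the radial Pohozaev bound yields a three-term dyadic recursion $E_k \le \tfrac14(E_{k-1}+E_{k+1})+\eta_k$ for $E_k:=\int_{P_k}|\nabla u_n|^2$. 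Iterating in the spirit of \cite{DingWeiyue} gives $\sum_k E_k\to 0$, which is the energy identity (5); applying the oscillation estimate of Lemma~\ref{lem:small-energy-regularity} to $\widehat u_n$ on each $P_k$ and summing yields $\sum_k \mathrm{osc}_{P_k}u_n\to 0$, which is the no-neck property (6). The trickiest point I anticipate is coordinating the decomposition with the involution so that the free boundary condition is respected on piece (b) and the Poincar\'e/Fourier constants on the half-circles $\partial^+D_r^+(\widetilde x_n)$ remain uniform in $r$; the extension to a full annulus via $\widehat{u}_n$ is exactly what makes this step work.
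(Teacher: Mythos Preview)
Your overall strategy matches the paper's: point-picking with the two-case dichotomy on $d_n/\lambda_n$, reduction to a single bubble by induction, and a decomposition of the neck whose essential piece is a half-annulus centered at the boundary projection $x_n'\in\partial^0 D^+$. Where your proposal diverges from the paper is in the analytic engine on that half-annulus, and there is one genuine gap.

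The paper does not use a Fourier expansion or a three-term dyadic recursion. On the involuted full annulus $\widehat\Omega_2$ it compares $\widehat u_n$ to its circular average $\widehat u_n^*(r)$: integration by parts plus Jensen give
\[
\int_{\widehat\Omega_2}\nabla\widehat u_n\cdot\nabla(\widehat u_n-\widehat u_n^*)\,dx \ \ge\ \frac12\int_{\widehat\Omega_2}|\nabla\widehat u_n|^2\,dx \;-\; \int_{\widehat\Omega_2}\Bigl(|\partial_r\widehat u_n|^2-\tfrac12|\nabla\widehat u_n|^2\Bigr)\,dx,
\]
while equation \eqref{equation:global-form} together with the $L^\infty$ smallness of $\widehat u_n-\widehat u_n^*$ bounds the left side (plus boundary terms) by $C(\epsilon+\delta)$. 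For the no-neck property this same computation is localized to $Q(t)=D^+_{2^{t_0+t}2r_nR}(x_n')\setminus D^+_{2^{t_0-t}2r_nR}(x_n')$ and rewritten as a first-order differential inequality $f(t)\le \tfrac{C}{\log 2}f'(t)+C2^{t_0+t}r_nR$, which integrates to exponential decay of the dyadic energies and hence summability of the oscillations. Your claim that $\sum_k E_k\to 0$ alone would give $\sum_k\mathrm{osc}_{P_k}u_n\to 0$ is not enough; one needs $\sum_k E_k^{1/2}$ small, and this is exactly what the exponential decay buys.

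The technical point you have not addressed is the bridge between the half-disc Pohozaev identity and the full-annulus analysis. Corollary~\ref{cor:01} controls $\int_{P_k}(|\partial_r u_n|^2-\tfrac12|\nabla u_n|^2)$ on the \emph{half}-ring, but the circular-average (or any Fourier) argument lives on the \emph{full} ring and needs the same quantity for $\widehat u_n$. These do not coincide, because the involution $\sigma$ is not an isometry of $\R^N$. The paper's resolution is that on the neck $u_n(\Omega_2)\subset K_{C(\epsilon+\delta)}$ by the oscillation estimate, so the matrix $P^TP-\mathrm{Id}$ (with $P$ from \eqref{def:1}) is pointwise $O(\delta')$; this yields
\[
\int_{\widehat P_k}\Bigl(|\partial_r\widehat u_n|^2-\tfrac12|\nabla\widehat u_n|^2\Bigr)dx \ \le\ 2\int_{P_k}\Bigl(|\partial_r u_n|^2-\tfrac12|\nabla u_n|^2\Bigr)dx \;+\; C\delta'\int_{P_k}|\nabla u_n|^2\,dx,
\]
and the $\delta'$-term is absorbed on the left. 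Without this near-isometry observation your half-disc Pohozaev input and your full-circle angular analysis do not connect, and neither the energy identity nor the exponential decay needed for the no-neck property goes through as written.
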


\begin{proof}[\textbf{Proof of Theorem \ref{thm:1}}]
Assume $0$ is the only blow-up point of the sequence $\{u_n\}$ in $D^+$, $i.e.$
\begin{equation}\label{def:small-energy}
\liminf_{n\to\infty}E(u_n;D^+_r)\geq \frac{\overline{\epsilon}^2}{8}\mbox{ for all }r>0
\end{equation}
where $\overline{\epsilon}=\min\{\epsilon_1,\epsilon_3,\epsilon_4\}$. By the standard argument of blow-up analysis we can assume that, for any $n$, there exist sequences $x_n\to 0$ and $r_n\to 0$ such that
\begin{equation}\label{equation:02}
E(u_n;D^+_{r_n}(x_n))=\sup_{\substack{x\in D^+,r\leq r_n\\D^+_r(x)\subset D^+}}E(u_n;D^+_r(x))=\frac{\overline{\epsilon}^2}{32}.
\end{equation}

Denoting $d_n=dist(x_n,\partial^0D^+)$, we have the following two cases:

\

\noindent\textbf{Case 1:} $\limsup_{n\to\infty}\frac{d_n}{r_n}<\infty$.

Set
\[
v_n(x):=u_n(x_n+r_nx)
\]
and
\[
B_n:=\{x\in\mathbb{R}^2|x_n+r_nx\in D^+\}.
\]
After taking a subsequence, we may assume $\lim_{n\to\infty}\frac{d_n}{r_n}=a\geq0$. Then
\[
B_n\to \mathbb{R}^2_a:=\{(x^1,x^2)|x^2\geq -a\}.
\]
It is easy to see that $v_n(x)$ is defined in $B_n$ and satisfies
\begin{align}
&\tau(v_n(x))=\Delta v_n(x)+A(v_n(x))(\nabla v_n(x),\nabla v_n(x))\quad x\in B_n;\\
&v_n(x)\in K,\quad dv_n(x)(\overrightarrow{n})\perp T_{v_n(x)}K, \mbox{ if } x_n+r_nx\in\partial^0D^+,
\end{align}
where $\tau(v_n(x))=r_n^2\tau(u_n(x_n+r_nx))$.

Noting that for any $x\in\partial^0B_n:=\{x\in\mathbb{R}^2| \ x_n+r_nx\in\partial^0D^+\}$ on the boundary,
\begin{align*}
v_n(x)\in K,\quad dv_n(x)(\overrightarrow{n})\perp T_{v_n(x)}K,
\end{align*}
since $\|\tau(v_n)\|_{L^2( B_n)}\leq r_n\|\tau(u_n)\|_{L^2(D^+)}\leq \frac{\overline{\epsilon}^2}{4}$ when $n$ is big enough, by \eqref{equation:02} and Lemma \ref{lem:small-energy-regularity}, we have
\begin{equation}\label{inequality:02}
\|v_n\|_{W^{2,2}(D_{4R}(0)\cap B_n)}\leq C(R,N)
\end{equation}
for any $D_{4R}(0)\subset \mathbb{R}^2$. Then there exist a subsequence of $v_n$ (also denoted by $v_n$) and a nontrivial harmonic map $\widetilde{v}^1\in W^{2,2}(\mathbb{R}_a^2)$ with free boundary $\widetilde{v}^1(\partial\mathbb{R}_a^2)$ on $K$ such that for any $R>0$, there hold
\begin{equation}\label{equation:10}
\lim_{n\to\infty}\|v_n(x)-\widetilde{v}^1(x)\|_{W^{1,2}(D_R(0)\cap B_n\cap \R^2_a)}=0
\end{equation}
and
\begin{equation}\label{equation:09}
\lim_{n\to\infty}\|v_n(x)\|_{W^{1,2}(D_R(0)\cap B_n)}=\|\widetilde{v}^1(x)\|_{W^{1,2}(D_R(0)\cap \R^2_a)}.
\end{equation}
In fact, by \eqref{inequality:02}, we have
\begin{equation}
\|v_n\big(x-(0,\frac{d_n}{r_n})\big)\|_{W^{2,2}(D^+_{3R}(0))}\leq C(R,N)
\end{equation}
when $n$ is big enough. Then there exist a subsequence of $v_n$ (also denoted by $v_n$) and a harmonic map $\widetilde{v}\in W^{2,2}(D^+_{3R}(0))$ such that $$\lim_{n\to \infty}\|v_n\big(x-(0,\frac{d_n}{r_n})\big)-\widetilde{v}(x)\|_{W^{1,2}(D^+_{3R}(0))}=0$$ and
$v_n\big(x-(0,\frac{d_n}{r_n})\big)\to\widetilde{v}(x)$, $\frac{dv_n\big(x-(0,\frac{d_n}{r_n})\big)}{d\overrightarrow{n}}\to \frac{d\widetilde{v}}{d\overrightarrow{n}}(x)$, $a.e.\ x\in \partial^0D^+_{3R}(0)$ as $n\to\infty$. Set
$$\widetilde{v}^1(x):=\widetilde{v}(x+(0,a)),$$
then $\widetilde{v}^1\in W^{2,2}(\mathbb{R}_a^2\cap D_{2R}(0))$ is a harmonic map with free boundary $\widetilde{v}^1(\partial\mathbb{R}_a^2\cap D_{2R}(0))$ on $K$ such that $$\lim_{n\to \infty}\|v_n(x)-\widetilde{v}^1(x)\|_{W^{1,2}(D_{2R}(0)\cap B_n\cap \R^2_a)}=0.$$ Lastly, \eqref{equation:09} follows from \eqref{inequality:02}, \eqref{equation:10}, Sobolev embedding, Young's inequality and the fact that the measure of $D_{2R}(0)\cap B_n\setminus \R^2_a$ goes to zero as $n\to\infty$.

In addition,  $E(\widetilde{v}^1;D_1(0)\cap\mathbb{R}_a^2)=\frac{\overline{\epsilon}^2}{32}$. By the conformal invariance of harmonic maps and the removable singularity Theorem \ref{thm:remov-sing-boundary}, $\widetilde{v}^1(x)$ can be extended to a nontrivial harmonic disk.

\

\noindent\textbf{Case 2:} $\limsup_{n\to\infty}\frac{d_n}{r_n}=\infty$.

In this case, we can see that $v_n(x)$ is defined in $B_n$ which tends to $\mathbb{R}^2$ as $n\to\infty$.
Moreover, for any $x\in\mathbb{R}^2$, when $n$ is sufficiently large, by \eqref{equation:02}, we have
\begin{equation}
E(v_n;D_1(x))\leq \frac{\overline{\epsilon}^2}{32}.
\end{equation}
According to Lemma \ref{lem:small-energy-regularity-interior}, there exist a subsequence of $v_n$ (we still denote it by $v_n$) and a harmonic map $v^1(x)\in W^{1,2}(\mathbb{R}^2,N)$ such that
\[
\lim_{n\to\infty}v_n(x)=v^1(x) \mbox{ in }W^{1,2}_{loc}(\mathbb{R}^2).
\]
Besides, we know $E(v^1;D_1(0))=\frac{\overline{\epsilon}^2}{32}$. By the standard theory of harmonic maps, $v^1(x)$ can be extended to a nontrivial harmonic sphere. We call the above harmonic sphere $v^1(x)$ or harmonic disk $\widetilde{v}^1(x)$ the first bubble.

\

We will split the proof of Theorem \ref{thm:1} into two parts, energy identity and no neck   result. Now, we begin to prove the energy identity.

\

\noindent\textbf{Energy identity }:
By the standard induction argument in \cite{DingWeiyueandTiangang}, we only need to prove the theorem in the case where there is only one bubble.

\

By Lemma \ref{lem:small-energy-regularity-interior} and Lemma \ref{lem:small-energy-regularity}, there exist a subsequence of $u_n$ (still denoted by $u_n$) and a weak limit $u\in W^{2,2}(D^+)$ such that
\[
\lim_{\delta\to 0}\lim_{n\to \infty}E(u_n;D^+\setminus D^+_\delta(x_n))=E(u;D^+).
\]
So, in both cases, the energy identity is equivalent to
\begin{equation}\label{equation:energy-identity}
\lim_{R\to\infty}\lim_{\delta\to 0}\lim_{n\to \infty}E(u_n;D^+_\delta(x_n)\setminus D^+_{r_nR}(x_n))=0.
\end{equation}
To prove the no neck   property, $i.e.$ that  the sets $u(D^+)$ and $v(\mathbb{R}^2\cup\infty)$ or $v(\mathbb{R}_a^2\cup\infty)$ are connected, it is enough to show that
\begin{equation}\label{equation:no-neck}
\lim_{R\to\infty}\lim_{\delta\to 0}\lim_{n\to \infty}\|u_n\|_{Osc\big(D^+_\delta(x_n)\setminus D^+_{r_nR}(x_n)\big)}=0.
\end{equation}

\

\noindent\textbf{Step 1:} We prove the energy identity for \textbf{Case 1}, $i.e.$, $\lim_{n\to\infty}\frac{d_n}{r_n}=a<\infty$.

\

Under the ``one bubble" assumption, we first make the following:

\

\noindent\textbf{Claim:} for any $\epsilon>0$, there exist $\delta>0$ and $R>0$ such that
\begin{equation}\label{equation:assumption-small}
\int_{D^+_{8t}(x_n)\setminus D^+_{t}(x_n)}|\nabla u_n|^2dx\leq \epsilon^2 \mbox{ for any } t\in(\frac{1}{2}r_nR,2\delta)
\end{equation}
when $n$ is large enough.

In fact, if \eqref{equation:assumption-small} is not true, then we can find $t_n\to 0$, such that $\lim_{n\to \infty}\frac{t_n}{r_n}=\infty$ and
\begin{equation}\label{equation:19}
\int_{D^+_{8t_n}(x_n)\setminus D^+_{t_n}(x_n)}|\nabla u_n|^2dx\geq \epsilon_6>0.
\end{equation}
 Then we have \[
\lim_{n\to\infty}\frac{d_n}{t_n}=0.
\]

We set $$w_n(x):=u_n(x_n+t_nx)$$ and $$B'_n:=\{x\in\mathbb{R}^2|x_n+t_nx\in D^+\}.$$ Then $w_n(x)$ lives in $B'_n$ which tends to $\mathbb{R}_+^2$ as $n\to\infty$. It is easy to see that $0$ is an energy concentration point for $w_n$. We have to consider the following two cases:

\

\noindent$\textbf{(a)}$ $w_n$ has no other energy concentration points except $0$.

\

By Lemma \ref{lem:small-energy-regularity-interior}, Lemma \ref{lem:small-energy-regularity} and the process of constructing the first bubble, passing to a subsequence, we may assume that  $w_n$ converges to a harmonic map $w(x):\mathbb{R}^2_+\to N$ with free boundary $w(\partial \mathbb{R}^2_+)$ on $K$ satisfying, for any $R>0$,
\[
\sup_{\lambda>0}\lim_{n\to\infty}\|w_n(x)-w(x)\|_{W^{1,2}\big((D_R(0)\cap B'_n)\setminus D_\lambda(0)\big)}=0.
\]
Noting that \eqref{equation:19} implies
\begin{equation}
\int_{(D_8\setminus D_1)\cap B'_n}|\nabla w|^2dx=\lim_{n\to\infty}\int_{(D_8\setminus D_1)\cap B'_n}|\nabla w_n|^2dx\geq \epsilon_6>0.
\end{equation}
By the conformal invariance of harmonic map and Theorem \ref{thm:remov-sing-boundary}, $w(x)$ is a nontrivial harmonic disk which can be seen as the second bubble. This  contradicts the  ``one bubble" assumption.

\

\noindent$\textbf{(b)}$ $w_n$ has another energy concentration point $p\neq 0$.

\

Without loss of generality, we may assume $p$ is the only energy concentration point in $D^+_{r_0}(p)$ for some $r_0>0$.
Similar to the process of constructing the first bubble, there exist $x_n'\to p$ and $r_n'\to 0$ such that
\begin{equation}\label{equation:20}
E(w_n;D^+_{r'_n}(x'_n)\cap B_n')=\sup_{\substack{x\in D_{r_0}^+(p),r\leq r_n\\D^+_r(x)\subset D_{r_0}^+(p)}}E(w_n;D^+_r(x)\cap B_n')=\frac{\overline{\epsilon}^2}{32}.
\end{equation}
By \eqref{equation:02}, we know $r_n't_n\geq r_n$. Then, passing to a subsequence we may assume $\lim_{n\to\infty}\frac{d_n}{r'_nt_n}=d\in [0,a]$. Moreover, there exists a nontrivial harmonic map $\widetilde{v}^2(x):\mathbb{R}^2_d\to N$ with free boundary $\widetilde{v}^2(\partial \mathbb{R}^2_d)$ on $K$ satisfying, for any $R>0$,
\[
\lim_{n\to\infty}\|w_n(x_n'+r_n'x)- \widetilde{v}^2(x)\|_{W^{1,2}(D_R(0)\cap B''_n)}=0.
\]
where $B''_n:=\{x\in\mathbb{R}^2|x'_n+r'_nx\in B'_n\}$. That is
\begin{align}\label{equation:24}
\lim_{n\to\infty}\|u_n(x_n+t_nx_n'+t_nr_n'x)- \widetilde{v}^2(x)\|_{W^{1,2}(D_R(0)\cap B''_n)}=0.
\end{align}
Therefore, $\widetilde{v}^2(x)$ is also a bubble for the sequence $u_n$. This is also contradiction to the "one bubble" assumption. Thus, we proved \textbf{Claim} \eqref{equation:assumption-small}.

\

Let $x_n'\in\partial^0 D^+$ be the projection of $x_n$, $i.e.$ $d_n=dist(x_n,\partial^0D^+)=|x_n-x_n'|$. Firstly, we decompose the neck domain $D^+_\delta(x_n)\setminus D^+_{r_nR}(x_n)$ as follows
\begin{align*}
D^+_\delta(x_n)\setminus D^+_{r_nR}(x_n)=&D^+_\delta(x_n)\setminus D^+_{\frac{\delta}{2}}(x'_n)\cup D^+_{\frac{\delta}{2}}(x'_n)\setminus D^+_{2r_nR}(x'_n)\cup D^+_{2r_nR}(x'_n)\setminus D^+_{r_nR}(x_n)\\
:=&\Omega_1\cup\Omega_2\cup\Omega_3,
\end{align*}
when $n$ and $R$ are large.

Since $\lim_{n\to\infty}\frac{d_n}{r_n}=a$, when $n$ and $R$ are large enough, it is easy to see that
\[
\Omega_1\subset D^+_\delta(x_n)\setminus D^+_{\frac{\delta}{4}}(x_n)\quad and \quad \Omega_3\subset D^+_{4r_nR}(x_n)\setminus D^+_{r_nR}(x_n).
\]
Moreover, for any $2r_nR\leq t\leq \frac{1}{2}\delta$, there holds
\[
D^+_{2t}(x'_n)\setminus D^+_{t}(x'_n)\subset D^+_{4t}(x_n)\setminus D^+_{t/2}(x_n).
\]

By assumption \eqref{equation:assumption-small}, we have
\begin{equation}\label{equation:03}
E(u_n;\Omega_1)+E(u_n;\Omega_3)\leq \epsilon^2
\end{equation}
and
\begin{equation}\label{equation:assumption-small-2}
\int_{D^+_{2t}(x'_n)\setminus D^+_{t}(x'_n)}|\nabla u_n|^2dx\leq \epsilon^2 \mbox{ for any } t\in(2r_nR, \frac{1}{2}\delta).
\end{equation}

By a scaling argument, we may assume
\[
\|\nabla u_n\|_{L^2(D^+_{4t}(x'_n)\setminus D^+_{t/2}(x'_n))}+\|\tau(u_n)\|_{L^p(D^+_{4t}(x'_n)\setminus D^+_{t/2}(x'_n))}\leq\overline{\epsilon}.
\]
According to the small energy regularity theory Lemma \ref{lem:small-energy-regularity-interior} and Lemma \ref{lem:small-energy-regularity}, we obtain
\begin{align}\label{inequality:14}
Osc_{D^+_{2t}(x'_n)\setminus D^+_{t}(x'_n)}u_n\leq C(\|\nabla u_n\|_{L^2(D^+_{4t}(x'_n)\setminus D^+_{t/2}(x'_n))}+t\|\tau(u_n)\|_{L^2(D^+_{4t}(x'_n)\setminus D^+_{t/2}(x'_n))})
\end{align}
for any $t\in(2r_nR, \frac{1}{2}\delta)$. Thus, $u_n(\Omega_2)\subset K_{\delta_0}$ and we can extend the definition of $u_n$ to the domain $\widehat{\Omega}_2:= D_{\frac{\delta}{2}}(x'_n)\setminus D_{2r_nR}(x'_n)$ by defining $\widehat{u}_n$ as \eqref{def:function}. Then $\widehat{u}_n\in W^{2,2}(\widehat{\Omega}_2)$ and satisfies equation \eqref{equation:global-form} where we take $F_n(x)=\tau(u_n)(x)$ and define $\Upsilon_{\widehat{u_n}}(\cdot,\cdot)$, $\widehat{F_n}(x)$ as in \eqref{def:2}.

Define
\[
\widehat{u}_n^*(r):=\frac{1}{2\pi r}\int_{\partial D_r(x_n')}\widehat{u}_n.
\]
Then by \eqref{inequality:14}, we have
\begin{align*}
\|\widehat{u}_n(x)-\widehat{u}_n^*(x)\|_{L^\infty(\widehat{\Omega}_2)}&\leq\sup_{2r_nR\leq t\leq \frac{\delta}{2}} \|\widehat{u}_n(x)-\widehat{u}_n^*(x)\|_{L^\infty(D_{2t}(x_n')\setminus D_t(x_n'))}\\
&\leq C(1+\|D\sigma\|_{L^\infty})Osc_{D^+_{2t}(x_n')\setminus D^+_t(x_n')}u_n
\leq C(N)( \epsilon+\delta).\end{align*}

We have
\begin{align*}
\int_{\widehat{\Omega}_2}\nabla{\widehat{u}_n}\nabla(\widehat{u}_n-\widehat{u}_n^*)dx=\int_{\partial\widehat{\Omega}_2}\frac{\partial\widehat{u}_n}{\partial n}(\widehat{u}_n-\widehat{u}_n^*)-\int_{\widehat{\Omega}_2}\Delta{\widehat{u}_n}(\widehat{u}_n-\widehat{u}_n^*)dx.
\end{align*}

On the one hand, by Jessen's inequality, we have
\begin{align*}
\int_{\widehat{\Omega}_2}\nabla{\widehat{u}_n}\nabla(\widehat{u}_n-\widehat{u}_n^*)dx
&=\int_{\widehat{\Omega}_2}|\nabla{\widehat{u}_n}|^2dx-\int_{\widehat{\Omega}_2}\frac{\partial\widehat{u}_n}{\partial r}\frac{\partial\widehat{u}_n^*}{\partial r}dx\\
&\geq\int_{\widehat{\Omega}_2}|\nabla{\widehat{u}_n}|^2dx-(\int_{\widehat{\Omega}_2}|\frac{\partial\widehat{u}_n}{\partial r}|^2dx)^{1/2}(\int_{\widehat{\Omega}_2}|\frac{1}{2\pi}\int_0^{2\pi}
\frac{\partial \widehat{u}_n}{\partial r}(r,\theta)d\theta|^2dx)^{1/2}\\
&\geq\int_{\widehat{\Omega}_2}|\nabla{\widehat{u}_n}|^2dx-\int_{\widehat{\Omega}_2}|\frac{\partial\widehat{u}_n}{\partial r}|^2dx\\
&=\frac{1}{2}\int_{\widehat{\Omega}_2}|\nabla{\widehat{u}_n}|^2dx-\int_{\widehat{\Omega}_2}(|\frac{\partial\widehat{u}_n}{\partial r}|^2-\frac{1}{2}|\nabla{\widehat{u}_n}|^2)dx.
\end{align*}

On the other hand, using equation \eqref{equation:global-form}, we get
\begin{align*}
-\int_{\widehat{\Omega}_2}\Delta{\widehat{u}_n}(\widehat{u}_n-\widehat{u}_n^*)dx
&\leq
\int_{\widehat{\Omega}_2}|\Upsilon_{\widehat{u}_n}(\nabla\widehat{u}_n,\nabla\widehat{u}_n)
+\widehat{F_n}||\widehat{u}_n-\widehat{u}_n^*|dx\\
&\leq
C( \epsilon+\delta)\int_{\widehat{\Omega}_2}|\nabla\widehat{u}_n|^2dx+
C( \epsilon+\delta)\int_{\widehat{\Omega}_2}|\widehat{F_n}|dx\\
&\leq
C( \epsilon+\delta)\int_{\widehat{\Omega}_2}|\nabla\widehat{u}_n|^2dx+
C( \epsilon+\delta)\|\tau_n\|_{L^2(\Omega_2)}.
\end{align*}
Thus,
\begin{align}\label{inequality:03}
&(\frac{1}{2}-C( \epsilon+\delta))\int_{\widehat{\Omega}_2}|\nabla\widehat{u}_n|^2dx\notag\\
&\leq
\int_{\partial (\widehat{\Omega}_2)}\frac{\partial\widehat{u}_n}{\partial n}(\widehat{u}_n-\widehat{u}_n^*)+\int_{\widehat{\Omega}_2}(|\frac{\partial\widehat{u}_n}{\partial r}|^2-\frac{1}{2}|\nabla{\widehat{u}_n}|^2)dx+C( \epsilon+\delta).
\end{align}

By the definition of $\widehat{u}_n$ (see \eqref{def:function}), we obtain
\begin{align*}
&\int_{\widehat{\Omega}_2}(|\frac{\partial\widehat{u}_n}{\partial r}|^2-\frac{1}{2}|\nabla{\widehat{u}_n}|^2)dx\\
&=
\int_{\Omega_2}(|\frac{\partial u_n}{\partial r}|^2-\frac{1}{2}|\nabla{u_n}|^2)dx+\int_{\widehat{\Omega}_2\setminus \Omega_2}(|D\sigma\cdot\frac{\partial u_n(\rho(x))}{\partial r}|^2-\frac{1}{2}|D\sigma\cdot\nabla{u_n(\rho(x))}|^2)dx\\
&=
\int_{\Omega_2}(|\frac{\partial u_n}{\partial r}|^2-\frac{1}{2}|\nabla{u_n}|^2)dx+\int_{\Omega_2}(|D\sigma\cdot\frac{\partial u_n(x)}{\partial r}|^2-\frac{1}{2}|D\sigma\cdot\nabla{u_n(x)}|^2)dx.
\end{align*}
Note that
\begin{align*}
|D\sigma\cdot\frac{\partial u_n(x)}{\partial r}|^2&=\langle P(u_n(x))\cdot\frac{\partial u_n(x)}{\partial r},P(u_n(x))\cdot\frac{\partial u_n(x)}{\partial r}\rangle
=\langle P^TP\cdot\frac{\partial u_n(x)}{\partial r},\frac{\partial u_n(x)}{\partial r}\rangle\\
&=\langle \big(P^TP-Id\big)\frac{\partial u_n(x)}{\partial r},\frac{\partial u_n(x)}{\partial r}\rangle+|\frac{\partial u_n(x)}{\partial r}|^2,
\end{align*}
where $P$ is the  matrix corresponding to the  linear operator defined by \eqref{def:1} under the orthonormal basis of $\mathbb{R}^N$.

Similarly,
\begin{align*}
|D\sigma\cdot\nabla u_n(x)|^2=
\langle \big(P^TP-Id\big)\nabla u_n(x),\nabla u_n(x)\rangle+|\nabla u_n(x)|^2.
\end{align*}

Noting that $\Xi|_{K}=Id$, by the continuity of eigenvalues of $P^TP$, we have that for any $\delta'>0$, there exists a constant $\delta_1=\delta_1(\delta')>0$, such that for any $\xi\in \mathbb{R}^n$ and $y\in K_{\delta_1}$, there holds $$\langle P^T(y)P(y)\xi,\xi\rangle\leq (1+\delta')|\xi|^2.$$

By \eqref{inequality:14}, we have $\|dist( u_n,K)\|_{L^\infty(\Omega_2)}\leq C(\epsilon+\delta)$. Thus, for any $\delta'>0$, $\xi\in \mathbb{R}^n$, there holds $$\langle(P^T(u_n(x))P(u_n(x))-Id)\xi,\xi\rangle\leq \delta'|\xi|^2$$ when $\epsilon$ and $\delta$ are small enough.

Thus,
\begin{align}\label{inequality:04}
&\int_{\widehat{\Omega}_2}(|\frac{\partial\widehat{u}_n}{\partial r}|^2-\frac{1}{2}|\nabla{\widehat{u}_n}|^2)dx\notag\\
&\leq
2\int_{\Omega_2}(|\frac{\partial u_n}{\partial r}|^2-\frac{1}{2}|\nabla{u_n}|^2)dx+C\delta'\int_{\Omega_2}|\nabla{u_n(x)}|^2dx\notag\\
&=
2\sum_{i=1}^{m_n}\int_{D^+_{2^{i}(2r_nR)}(x'_n)\setminus D^+_{2^{i-1}(2r_nR)}(x'_n)}(|\frac{\partial u_n}{\partial r}|^2-\frac{1}{2}|\nabla{u_n}|^2)dx+C\delta'\int_{\Omega_2}|\nabla{u_n(x)}|^2dx\notag\\
&\leq
C\sum_{i=1}^{m_n}2^{i}r_nR+C\delta'\int_{\Omega_2}|\nabla{u_n(x)}|^2dx\leq C\delta+C\delta'\int_{\Omega_2}|\nabla{u_n(x)}|^2dx,
\end{align}
where the last second inequality follows from Corollary \ref{cor:01}.

Combining inequality \eqref{inequality:03} with \eqref{inequality:04}, we have
\begin{align}\label{inequality:05}
&(\frac{1}{2}-C(\epsilon+\delta'+\delta))\int_{\widehat{\Omega}_2}|\nabla\widehat{u}_n|^2dx\leq
\int_{\partial \widehat{\Omega}_2}\frac{\partial\widehat{u}_n}{\partial n}(\widehat{u}_n-\widehat{u}_n^*)+C(\epsilon+\delta).
\end{align}

As for the boundary term, by trace theory, we have
\begin{align*}
\int_{\partial D_{\frac{1}{2}\delta}(x_n')}\frac{\partial\widehat{u}_n}{\partial n}(\widehat{u}_n-\widehat{u}_n^*)
&\leq C(\epsilon+\delta)\int_{\partial^+ D_{\frac{1}{2}\delta}(x_n')}|\nabla u_n|\\
&\leq
C(\epsilon+\delta)\left(\|\nabla u_n\|_{L^2(D^+_{\frac{1}{2}\delta}(x_n')\setminus D^+_{\frac{1}{4}\delta}(x_n') )}+\delta\|\nabla^2 u_n\|_{L^2(D^+_{\frac{1}{2}\delta}(x_n')\setminus D^+_{\frac{1}{4}\delta}(x_n') )}\right)\\
&\leq
C(\epsilon+\delta)\left(\|\nabla u_n\|_{L^2(D^+_{\delta}(x_n')\setminus D^+_{\frac{1}{8}\delta}(x_n') )}+\delta\|\tau_n\|_{L^2(D^+_{\delta}(x_n')\setminus D^+_{\frac{1}{8}\delta}(x_n') )}\right)\\
&\leq C(\epsilon+\delta),
\end{align*}
where the last second inequality can be derived from Lemma \ref{lem:small-energy-regularity-interior} and Lemma \ref{lem:small-energy-regularity}.

Also, there holds
\begin{align*}
\int_{\partial D_{2r_nR}(x_n')}\frac{\partial\widehat{u}_n}{\partial n}(\widehat{u}_n-\widehat{u}_n^*)
\leq C(\epsilon+\delta).
\end{align*}

Therefore, combining these results and taking $\epsilon$ and $\delta$ in \eqref{inequality:05} sufficiently small (then $\delta'$ is small), we have
\begin{align}\label{inequality:06}
\int_{\Omega_2}|\nabla u_n|^2dx\leq
\int_{\widehat{\Omega}_2}|\nabla\widehat{u}_n|^2dx\leq C(\delta+\epsilon).
\end{align}
Then the equality \eqref{equation:energy-identity} follows from \eqref{equation:03} and \eqref{inequality:06}. We proved the energy identity for the \textbf{Case 1}.

\

\noindent\textbf{Step 2:} We prove the energy identity for \textbf{Case 2}, $i.e.$, $\limsup_{n\to\infty}\frac{d_n}{r_n}=\infty$.

\

The proof is similar to the \textbf{Case 1}. Firstly, we need to show the \textbf{Claim} \eqref{equation:assumption-small} also holds in this case.

In fact, if \eqref{equation:assumption-small} is not true, then we can find $t_n\to 0$, such that $\lim_{n\to \infty}\frac{t_n}{r_n}=\infty$ and
\begin{equation}\label{equation:21}
\int_{D^+_{8t_n}(x_n)\setminus D^+_{t_n}(x_n)}|\nabla u_n|^2dx\geq \epsilon_6>0.
\end{equation}
 Then passing to a subsequence, we may assume
\[
\lim_{n\to\infty}\frac{d_n}{t_n}=b\in[0,\infty].
\]

We set $$w_n(x):=u_n(x_n+t_nx)$$ and $$B'_n:=\{x\in\mathbb{R}^2|x_n+t_nx\in D^+\}.$$ Then $w_n(x)$ lives in $B'_n$ and $0$ is an energy concentration point for $w_n$. We have to consider the following two cases:

\

\noindent$\textbf{(c)}$ $b<\infty$.

\

Then $B'_n$ tends to $\mathbb{R}^2_b$ as $n\to \infty$. Here, we also need to consider two cases.

\

\noindent$\textbf{(i)}$ $w_n$ has no other energy concentration points except $0$. It is almost the same as $\textbf{Case (a)}$ in \textbf{Step 1} where by passing to a subsequence, $w_n$ converges to a nontrivial harmonic map $w(x):\mathbb{R}^2_b\to N$ with free boundary $w(\partial \mathbb{R}^2_b)$ on $K$ which can be seen as the second bubble. This is a contradiction  to the "one bubble" assumption.

\

\noindent$\textbf{(ii)}$ $w_n$ has another energy concentration point $p\neq 0$. Similar to the process of $\textbf{Case (b)}$ in \textbf{Step 1}, there exist $x_n'\to p$ and $r_n'\to 0$ such that \eqref{equation:20} holds. Then, passing to a subsequence, we may assume $$\lim_{n\to\infty}\frac{d_n}{r'_nt_n}=d\in [0,\infty].$$ Moreover, if $d\in [0,\infty)$, then there exists a nontrivial harmonic map $\widetilde{v}^2(x):\mathbb{R}^2_d\to N$ with free boundary $\widetilde{v}^2(\partial \mathbb{R}^2_d)$ on $K$ satisfying \eqref{equation:24} as in $\textbf{Case (b)}$. If $d=\infty$, by the process of constructing the first bubble in \textbf{Case 2}, there exists $v^2(x):\mathbb{R}^2\to N$ is a nontrivial harmonic map such that $$w_n(x_n'+r_n'x)\to v^2(x)\ in\ W^{1,2}_{loc}(\mathbb{R}^2),$$ that is $$u_n(x_n+t_nx_n'+t_nr_n'x)\to v^2(x)\ in\ W^{1,2}_{loc}(\mathbb{R}^2).$$ In both cases, we will get the second bubble $v^2(x)$ or $\widetilde{v}^2(x)$. This contradicts the "one bubble" assumption.

\

\noindent$\textbf{(d)}$ $b=\infty$.

\

Then $B'_n$ tends to $\mathbb{R}^2$ as $n\to \infty$. Again, we need to consider two cases.

\

\noindent$\textbf{(iii)}$ $w_n$ has no other energy concentration points except $0$. By Lemma \ref{lem:small-energy-regularity-interior}, Theorem \ref{thm:remov-sing-interior} and \eqref{equation:21}, there exists $v^2(x):\mathbb{R}^2\to N$ is a nontrivial harmonic map such that $$w_n(x)\to v^2(x)\ in\ W^{1,2}_{loc}(\mathbb{R}^2\setminus \{0\}).$$ Then, we get the second bubble $v^2(x)$ which  contradicts the "one bubble" assumption.

\

\noindent$\textbf{(iv)}$ $w_n$ has another energy concentration point $p\neq 0$. Similar to  $\textbf{Case (b)}$ in \textbf{Step 1}, there exist $x_n'\to p$ and $r_n'\to 0$ such that \eqref{equation:20} holds and passing to a subsequence, we have $$\lim_{n\to\infty}\frac{d_n}{r'_nt_n}=\infty.$$ Moreover, by the process of constructing the first bubble in \textbf{Case 2}, there exists a nontrivial harmonic map $v^2(x):\mathbb{R}^2\to N$  such that $$w_n(x_n'+r_n'x)\to v^2(x)\ in\ W^{1,2}_{loc}(\mathbb{R}^2),$$ that is $$u_n(x_n+t_nx_n'+t_nr_n'x)\to v^2(x)\ in\ W^{1,2}_{loc}(\mathbb{R}^2).$$ This is also a contradiction to the  "one bubble" assumption. Thus, we proved our \textbf{Claim} \eqref{equation:assumption-small}.

\

Secondly, we decompose the neck domain $D^+_\delta(x_n)\setminus D^+_{r_nR}(x_n)$ as follows
\begin{align*}
D^+_\delta(x_n)\setminus D^+_{r_nR}(x_n)&=D^+_\delta(x_n)\setminus D^+_{\frac{\delta}{2}}(x'_n)\cup D^+_{\frac{\delta}{2}}(x'_n)\setminus D^+_{2d_n}(x'_n)\\
&\quad\cup D^+_{2d_n}(x'_n)\setminus D^+_{d_n}(x_n)\cup D^+_{d_n}(x_n)\setminus D^+_{r_nR}(x_n)\\
&:=\Omega_1\cup\Omega_2\cup\Omega_3\cup\Omega_4,
\end{align*}
when $n$ is large.

Since $\lim_{n\to\infty}d_n=0$ and $\lim_{n\to\infty}\frac{d_n}{r_n}=\infty$, when $n$ is large enough, it is easy to see that
\[
\Omega_1\subset D^+_\delta(x_n)\setminus D^+_{\frac{\delta}{4}}(x_n),\quad and \quad\Omega_3\subset D^+_{4d_n}(x_n)\setminus D^+_{d_n}(x_n).
\]

Moreover, for any $2d_n\leq t\leq \frac{1}{2}\delta$, there holds
\[
D^+_{2t}(x'_n)\setminus D^+_{t}(x'_n)\subset D^+_{4t}(x_n)\setminus D^+_{t/2}(x_n).
\]

By assumption \eqref{equation:assumption-small}, we have
\begin{equation}\label{inequality:07}
E(u_n;\Omega_1)+E(u_n;\Omega_3)\leq \epsilon^2
\end{equation}
and
\begin{equation}
\int_{D^+_{2t}(x'_n)\setminus D^+_{t}(x'_n)}|\nabla u_n|^2dx\leq \epsilon^2 \mbox{ for any } t\in(2d_n, \frac{1}{2}\delta).
\end{equation}

Noting that $\Omega_4=D^+_{d_n}(x_n)\setminus D^+_{r_nR}(x_n)=D_{d_n}(x_n)\setminus D_{r_nR}(x_n)$, by the well-known blow-up analysis theory of harmonic maps with interior blow-up points (also a sequence of maps with uniformly $L^p$ bounded tension fields for some $p\geq\frac{6}{5}$), there holds
\begin{equation}\label{equation:04}
\lim_{R\to\infty}\lim_{n\to 0}E(u_n;D_{d_n}(x_n)\setminus D_{r_nR}(x_n))=0.
\end{equation}
and
\begin{equation}
\lim_{R\to\infty}\lim_{n\to 0}Osc(u_n)_{D_{d_n}(x_n)\setminus D_{r_nR}(x_n)}=0.
\end{equation}
See \cite{DingWeiyueandTiangang,LiJiayuandZhuXiangrong,qing1997bubbling} for details.

Lastly, to estimate the energy concentration in $\Omega_2$, we can use the  same argument as in the  previous \textbf{Case 1} to get
\begin{align}\label{inequality:08}
\int_{\Omega_2}|\nabla u_n|^2dx\leq C(\delta+\epsilon).
\end{align}
Combining \eqref{inequality:07}, \eqref{equation:04} with \eqref{inequality:08}, it is easy to obtain \eqref{equation:energy-identity}. We proved the energy identity.

\

Next, we prove the no neck   property in Theorem \ref{thm:1}, $i.e.$, the base map and the bubbles are connected in the target manifold.

\

\noindent\textbf{No neck property}:
Here, we also need to consider two cases. But, for \textbf{Case 2}, we use the same argument as in the  previous reasoning where we split the neck domain into two parts, an interior domain and a boundary domain. Then, with the help of the no neck results in \cite{qing1997bubbling,LiJiayuandZhuXiangrong} for a sequence of maps with uniformly $L^2$-bounded tension fields, we just need to prove \eqref{equation:no-neck} for \textbf{Case 1}.

We may assume $\lim_{n\to\infty}\frac{d_n}{r_n}=a$ and decompose the neck domain $D^+_\delta(x_n)\setminus D^+_{r_nR}(x_n)=\Omega_1\cup\Omega_2\cup\Omega_3$, when $n$ and $R$ are large.

By assumption \eqref{equation:assumption-small} and small energy regularity (Lemma \ref{lem:small-energy-regularity-interior} and Lemma \ref{lem:small-energy-regularity}), we have
\begin{align}\label{inequality:09}
\|u_n\|_{Osc(D^+_{\delta}(x_n)\setminus D^+_{\frac{\delta}{4}}(x'_n))}&\leq \|u_n\|_{Osc(D^+_{\delta}(x_n)\setminus D^+_{\frac{\delta}{5}}(x_n))}\notag\\
&\leq C(\|\nabla u_n\|_{L^2(D^+_{\frac{4\delta}{3}}(x_n)\setminus D^+_{\frac{\delta}{6}}(x_n))}+\delta\|\tau_n\|_{L^2(D^+_{\frac{4\delta}{3}}(x_n)\setminus D^+_{\frac{\delta}{6}}(x_n))})\leq C(\epsilon+\delta)
\end{align}
and
\begin{align}\label{inequality:10}
\|u_n\|_{Osc(D^+_{4r_nR}(x'_n)\setminus D^+_{r_nR}(x_n))}&\leq \|u_n\|_{Osc(D^+_{5r_nR}(x_n)\setminus D^+_{r_nR}(x_n))}
\notag\\&\leq C(\|\nabla u_n\|_{L^2(D^+_{6r_nR}(x_n)\setminus D^+_{\frac{3r_nR}{4}}(x_n))}+r_nR\|\tau_n\|_{L^2(D^+_{6r_nR}(x_n)\setminus D^+_{\frac{3r_nR}{4}}(x_n))})\notag\\
&\leq C(\epsilon+\delta),
\end{align}
when $n$, $R$ are large and $\delta$ is small.

Without loss of generality, we may assume $\frac{1}{2}\delta=2^{m_n}(2r_nR)$ where $m_n\to \infty$ as $n\to\infty$.  Inspired by a technique by Ding \cite{DingWeiyue} for the interior bubbling case, we set $Q(t):=D^+_{2^{t_0+t}2r_nR}(x_n')\setminus D^+_{2^{t_0-t}2r_nR}(x_n')$, $\widehat{Q}(t):=D_{2^{t_0+t}2r_nR}(x_n')\setminus D_{2^{t_0-t}2r_nR}(x_n')$ and define
\[
f(t):=\int_{Q(t)}|\nabla u_n|^2dx,
\]
where $0\leq t_0\leq m_n$ and $0\leq t\leq \min\{t_0,m_n-t_0\}$.

Similar to the proof of \eqref{inequality:03} and \eqref{inequality:04}, we have
\begin{align}
&(\frac{1}{2}-C(\epsilon+\delta))\int_{\widehat{Q}(t)}|\nabla\widehat{u}_n|^2dx\notag\\&\leq
\int_{\partial (\widehat{Q}(t))}\frac{\partial\widehat{u}_n}{\partial n}(\widehat{u}_n-\widehat{u}_n^*)+\int_{\widehat{Q}(t)}(|\frac{\partial\widehat{u}_n}{\partial r}|^2-\frac{1}{2}|\nabla{\widehat{u}_n}|^2)dx+C(\epsilon+\delta)\int_{Q(t)}|\tau_n|dx
\end{align}
and
\begin{align}
\int_{\widehat{Q}(t)}(|\frac{\partial\widehat{u}_n}{\partial r}|^2-\frac{1}{2}|\nabla{\widehat{u}_n}|^2)dx&\leq
2\int_{Q(t)}(|\frac{\partial u_n}{\partial r}|^2-\frac{1}{2}|\nabla{u_n}|^2)dx+C\delta' \int_{Q(t)}|\nabla{u_n(x)}|^2dx\notag\\
&\leq
C2^{t_0+t}r_nR+C\delta'\int_{Q(t)}|\nabla{u_n(x)}|^2dx
\end{align}
where the last inequality follows from Corollary \ref{cor:01}.

As for the boundary, by Poincar\'{e}'s inequality, we have
\begin{align*}
\int_{\partial (D_{2^{t_0+t}2r_nR}(x_n'))}\frac{\partial\widehat{u}_n}{\partial n}(\widehat{u}_n-\widehat{u}_n^*)&\leq
(\int_{\partial (D_{2^{t_0+t}2r_nR}(x_n'))}|\frac{\partial\widehat{u}_n}{\partial r}|^2)^{\frac{1}{2}}(\int_{\partial (D_{2^{t_0+t}2r_nR}(x_n'))}|\widehat{u}_n-\widehat{u}_n^*|^2)^{\frac{1}{2}}\\
&\leq
C(\int_{\partial (D_{2^{t_0+t}2r_nR}(x_n'))}|\frac{\partial\widehat{u}_n}{\partial r}|^2)^{\frac{1}{2}}(2^{t_0+t}2r_nR\int_0^{2\pi}|\frac{\partial\widehat{u}_n}{\partial \theta}|^2)^{\frac{1}{2}}\\
&\leq
C2^{t_0+t}2r_nR\int_{\partial (D_{2^{t_0+t}2r_nR}(x_n'))}|\nabla\widehat{u}_n|^2\\
&\leq
C2^{t_0+t}2r_nR\int_{\partial^+ (D^+_{2^{t_0+t}2r_nR}(x_n'))}|\nabla u_n|^2.
\end{align*}
Similarly, we get
\begin{align*}
\int_{\partial (D_{2^{t_0-t}2r_nR}(x_n'))}\frac{\partial\widehat{u}_n}{\partial n}(\widehat{u}_n-\widehat{u}_n^*)\leq
C2^{t_0-t}2r_nR\int_{\partial^+ (D^+_{2^{t_0-t}2r_nR}(x_n'))}|\nabla u_n|^2.
\end{align*}

Using these together, we have
\begin{align}
&(\frac{1}{2}-C(\epsilon+\delta'+\delta))\int_{\widehat{Q}(t)}|\nabla\widehat{u}_n|^2dx\notag\\&\leq
C2^{t_0+t}2r_nR\int_{\partial^+ (D^+_{2^{t_0+t}2r_nR}(x_n'))}|\nabla u_n|^2+C2^{t_0-t}2r_nR\int_{\partial^+ (D^+_{2^{t_0-t}2r_nR}(x_n'))}|\nabla u_n|^2\notag\\
&\quad+C2^{t_0+t}r_nR+C(\epsilon+\delta)\int_{Q(t)}|\tau_n|dx\notag.
\end{align}
Taking $\epsilon$ and $\delta$ sufficiently small, we get
\begin{align}
&\int_{Q(t)}|\nabla u_n|^2dx\leq
C2^{t_0+t}2r_nR\int_{\partial^+ (D^+_{2^{t_0+t}2r_nR}(x_n'))}|\nabla u_n|^2+C2^{t_0-t}2r_nR\int_{\partial^+ (D^+_{2^{t_0-t}2r_nR}(x_n'))}|\nabla u_n|^2+C2^{t_0+t}r_nR.\notag
\end{align}
Therefore,
\begin{equation}
f(t)\leq \frac{C}{\log 2}f'(t)+C2^{t_0+t}r_nR.
\end{equation}
Thus,
\[
(2^{-\frac{1}{C}t}f(t))'\geq -C2^{t_0+(1-1/C)t}r_nR.
\]
Integrating from $2$ to $L$, we arrive at
\begin{align*}
f(2)&\leq C2^{-\frac{1}{C}L}f(L)+C2^{t_0}r_nR\int_2^L2^{(1-1/C)t}dt\leq C2^{-\frac{1}{C}L}f(L)+C2^{t_0}r_nR2^{(1-1/C)L}.
\end{align*}

Now, let $t_0=i$ and $L=L_i:=\min\{i,m_n-i\}$. Then, we have $Q(L_i)\subset D^+_{\delta/2}(x'_n)\setminus D^+_{2r_nR}(x'_n)\subset D^+_\delta(x_n)\setminus D^+_{r_nR}(x_n)$ and
\begin{align*}
\int_{D^+_{2^{i+2}2r_nR}(x_n')\setminus D^+_{2^{i-2}2r_nR}(x_n')}|\nabla u_n|^2dx
&\leq CE(u_n,D^+_\delta(x_n)\setminus D^+_{r_nR}(x_n))2^{-\frac{1}{C}L_i}+C2^{i}r_nR2^{(1-1/C)L_i}\\
&\leq
CE(u_n,D^+_\delta(x_n)\setminus D^+_{r_nR}(x_n))2^{-\frac{1}{C}L_i}+C2^{i}r_nR2^{(1-1/C)(m_n-i)}\\
&\leq
CE(u_n,D^+_\delta(x_n)\setminus D^+_{r_nR}(x_n))2^{-\frac{1}{C}L_i}+C\delta2^{(-1/C)(m_n-i)}\\
&\leq
C\epsilon2^{-\frac{1}{C}L_i}+C\delta2^{(-1/C)(m_n-i)},
\end{align*}
where we used the energy identity \eqref{equation:energy-identity}.

By Lemma \ref{lem:small-energy-regularity-interior} and Lemma \ref{lem:small-energy-regularity}, we obtain
\begin{align*}
&Osc_{D^+_{2^{i+1}2r_nR}(x_n')\setminus D^+_{2^{i-1}2r_nR}(x_n')}u_n\\
&\leq
C\left(\|\nabla u_n\|_{L^2(D^+_{2^{i+2}2r_nR}(x_n')\setminus D^+_{2^{i-2}2r_nR}(x_n'))}+(2^{i+2}2r_nR)\|\tau_n\|_{L^2(D^+_{2^{i+2}2r_nR}(x_n')\setminus D^+_{2^{i-2}2r_nR}(x_n'))}\right)\\
&\leq
C\left(\|\nabla u_n\|_{L^2(D^+_{2^{i+2}2r_nR}(x_n')\setminus D^+_{2^{i-2}2r_nR}(x_n'))}+2^{i}r_nR\right).
\end{align*}

Summing over $i$ from $2$ to $m_n-2$, we have
\begin{align*}
\|u_n\|_{Osc(D^+_{\delta/4}(x'_n)\setminus D^+_{4r_nR}(x'_n))}&\leq \sum_{i=2}^{m_n-2} \|u_n\|_{Osc(D^+_{2^{i+1}2r_nR}(x_n')\setminus D^+_{2^{i-1}2r_nR}(x_n'))}\\
&\leq
C\sum_{i=2}^{m_n-2}\left(\epsilon 2^{-\frac{1}{C}L_i}+\delta2^{(-1/C)(m_n-i)}+2^{i}r_nR\right)\\
&\leq
C\sum_{i=2}^{m_n-2}2^{-\frac{1}{C}i}(\epsilon +\delta)+C\delta\leq C(\epsilon+\delta).
\end{align*}
This inequality and \eqref{inequality:09}, \eqref{inequality:10} imply \eqref{equation:no-neck} and we have proved there is no neck during the blow-up process.
\end{proof}

\

Now, we can prove Theorem \ref{main:thm-1}.

\begin{proof}[\textbf{Proof of Theorem \ref{main:thm-1}}]
Combining the blow-up theory of a sequence of maps with uniformly $L^2$-bounded tension fields from a closed Riemann surface
(see \cite{DingWeiyueandTiangang,LiJiayuandZhuXiangrong,Lin-Wang,qing1997bubbling,Luoyong}) and Theorem \ref{thm:1}, we can easily
get the conclusion of Theorem \ref{main:thm-1} by following the standard blow-up scheme in \cite{DingWeiyueandTiangang}.

On the other hand, it is well known that harmonic spheres are minimal spheres and harmonic disks with free boundary on $K$ are minimal disks with free boundary on $K$ (see e.g. the proof of Theorem B in \cite{Ma-li}, page 300).
\end{proof}

\

\section{Application to the harmonic map flow with free boundary}\label{set:heat-flow}

\

In this section, we will apply the results in Theorem \ref{main:thm-1} to the harmonic map flow with free boundary and prove Theorem \ref{thm:infty-time} and Theorem \ref{thm:finite-time}.

Firstly, we have
\begin{lem}\label{lem:monotonicity}
Let $u:M\times(0,\infty)\to N$ be a global weak solution to (\ref{HMF:1}-\ref{HMF:4}), which is smooth away from a finite number of singular points. There holds the estimate
\begin{equation}
\int_0^\infty\int_M|\partial_tu|^2dxdt\leq E(u_0).
\end{equation}
Moreover, $E(u(\cdot,t))$ is continuous on $[0,\infty)$ and non-increasing.
\end{lem}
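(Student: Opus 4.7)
The plan is to compute $\frac{d}{dt}E(u(\cdot,t))$ on each open interval of smoothness and then assemble the identity across the finitely many singular times. On such an interval, differentiating under the integral and integrating by parts (with the paper's sign convention $\Delta_g = -\mathrm{div}\,\nabla$) gives
\[
\frac{d}{dt}E(u(\cdot,t)) = \int_M \langle \nabla u, \nabla \partial_t u\rangle\, dvol_g = \int_{\partial M}\langle du(\overrightarrow{n}),\partial_t u\rangle\, ds + \int_M \langle \Delta_g u, \partial_t u\rangle\, dvol_g.
\]
The boundary term vanishes because \eqref{HMF:3} forces $\partial_t u(x,t)\in T_{u(x,t)}K$ for $x\in\partial M$, while \eqref{HMF:4} states $du(\overrightarrow{n})\perp T_{u}K$ there, so the two factors are pointwise orthogonal. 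For the bulk term, substitute $\partial_t u = \tau(u) = -\Delta_g u + A(u)(\nabla u,\nabla u)$ and note that $\partial_t u \in T_u N$ is orthogonal to the normal-bundle-valued second fundamental form term, giving $\langle \Delta_g u, \partial_t u\rangle = -|\partial_t u|^2$.

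Thus on each smooth interval
\[
\frac{d}{dt}E(u(\cdot,t)) = -\int_M |\partial_t u|^2\, dvol_g \leq 0,
\]
which simultaneously yields monotonicity, smoothness on intervals, and after integrating from $0$ to $T$ (first between consecutive singular times, then summing) and sending $T \to \infty$, the global bound $\int_0^\infty \int_M |\partial_t u|^2 \,dx\,dt \leq E(u_0) - \lim_{T\to\infty}E(u(\cdot,T)) \leq E(u_0)$.

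For continuity, the only candidates for failure are the finitely many singular times $t_i$ (and $t=0$). Here I would invoke the construction of the global weak solution of Ma \cite{Ma-li}: $u(\cdot,t_i)$ is the weak $W^{1,2}$-limit of $u(\cdot,t)$ as $t\uparrow t_i$, so lower semicontinuity of the Dirichlet energy combined with the monotonicity above gives $\lim_{t\uparrow t_i} E(u(\cdot,t)) = E(u(\cdot,t_i))$; right-continuity follows from the fact that the flow restarts smoothly from $u(\cdot,t_i)$ on $(t_i,t_{i+1})$ and the identity $\frac{d}{dt}E = -\|\partial_t u\|_{L^2}^2$ extends to $t=t_i^+$ in the integrated sense.

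The main obstacle is precisely this continuity step at the singular times: one must ensure that no bubble energy is instantaneously discarded at $t_i$ in the chosen representative of the weak solution, which is a delicate feature of the Struwe/Ma construction rather than of the PDE itself. The differential identity on smooth intervals is the routine half of the argument and gives everything else.
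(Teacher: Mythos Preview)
Your core computation --- multiply \eqref{HMF:1} by $\partial_t u$, integrate by parts, and kill the boundary term via the free boundary orthogonality $\frac{\partial u}{\partial \overrightarrow{n}} \perp \partial_t u$ --- is exactly what the paper does. The paper writes the identity in integrated form $\int_{t_1}^{t_2}\int_M|\partial_t u|^2 = E(u(\cdot,t_1))-E(u(\cdot,t_2))$ and then simply says the conclusion follows, so on the smooth intervals your argument and theirs coincide.

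Where you are more careful than the paper is at the singular times, and there your proposed resolution has a genuine error. You write that lower semicontinuity of the Dirichlet energy combined with monotonicity gives $\lim_{t\uparrow t_i}E(u(\cdot,t))=E(u(\cdot,t_i))$. But lower semicontinuity under weak $W^{1,2}$ convergence only yields $E(u(\cdot,t_i))\leq \liminf_{t\uparrow t_i}E(u(\cdot,t))$, and monotonicity gives no reverse bound. In fact the paper's own Lemma~\ref{lem:2} shows that at a finite singular time $T_0$ one has $E(u(\cdot,t))\to m + E(u(\cdot,T_0))$ with $m>0$ the concentrated bubble energy, so $E$ is \emph{not} left-continuous there in general. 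Your final paragraph correctly senses this obstacle, but the sentence before it asserts the false equality.

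This does not damage the main estimate: summing the smooth-interval identities and using $E(u(\cdot,t_i^-))\geq E(u(\cdot,t_i))$ still telescopes to $\int_0^\infty\int_M|\partial_t u|^2\leq E(u_0)$, and non-increase of $E$ survives as well. The continuity assertion in the lemma's statement should be read as continuity on each open interval of smoothness (plus right-continuity at the restart times); the paper's proof glosses over this just as much as yours does, so the gap is really in the formulation rather than in your approach.
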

\begin{proof}
The proof is similar to Lemma 3.4 in \cite{Struwe-1}.
Multiply the equation \eqref{HMF:1} by $\partial_t u$ and integrate by parts, for any $0\leq t_1\leq t_2\leq\infty$, to get
\begin{align*}
\int_{t_1}^{t_2}\int_M|\partial_tu|^2dxdt&=\int_{t_1}^{t_2}\int_M-\Delta_g u\cdot\partial_tudxdt\\
&=\int_{t_1}^{t_2}\int_{\partial M}\frac{\partial u}{\partial\overrightarrow{n}}\cdot\partial_tu-\int_{t_1}^{t_2}\int_M\nabla u\cdot\nabla(\partial_tu)dxdt\\
&=-\int_{t_1}^{t_2}\int_M\frac{1}{2}\partial_t|\nabla u|^2dxdt=E(u(\cdot,t_1))-E(u(\cdot,t_2)),
\end{align*}
where $\overrightarrow{n}$ is the outward unit normal vector field on $\partial M$ and
we used the free boundary condition that $\frac{\partial u}{\partial\overrightarrow{n}}\bot\partial_tu$. Then the conclusion of the lemma follows immediately.
\end{proof}

Similar to the case of a closed domain (see Lemma 2.5 in \cite{Lin-Wang}), we have
\begin{lem}\label{lem:two-balls}
Let $u\in C^\infty(M\times (0,T_0),N)$ be a solution to (\ref{HMF:1}-\ref{HMF:4}). Then there exists a constant $R_0>0$ such that, for any
$x_0\in M$, $0<t\leq s<T_0$ and $0<R\leq R_0$, there hold:
\begin{align}\label{inequality:11}
E(u(s);B^M_{R}(x_0))\leq E(u(t);B^M_{2R}(x_0))+C\frac{s-t}{R^2}E(u_0),
\end{align}
and
\begin{align}\label{inequality:12}
E(u(t);B^M_{R}(x_0))\leq E(u(s);B^M_{2R}(x_0))+C\int_t^s\int_M|\partial_tu|^2dxdt+C\frac{s-t}{R^2}E(u_0).
\end{align}
\end{lem}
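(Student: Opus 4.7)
The plan is to derive both inequalities from a single localized energy identity obtained by testing the evolution equation with $\varphi^2 \partial_\tau u$. Choose $R_0$ less than the injectivity radius of $(M,g)$ and pick a smooth cutoff $\varphi=\varphi_{x_0,R}$ with $\varphi\equiv 1$ on $B^M_R(x_0)$, $\mathrm{supp}(\varphi)\subset B^M_{2R}(x_0)$, and $|\nabla\varphi|\leq C/R$, where $C$ depends only on $(M,g)$. Differentiating $\int_M \tfrac12|\nabla u|^2\varphi^2\,dvol_g$ in $\tau$ and integrating by parts on $M$ yields
\begin{align*}
\frac{d}{d\tau}\int_M \tfrac12|\nabla u|^2\varphi^2\,dvol_g &= \int_M (\Delta_g u)\cdot\partial_\tau u\,\varphi^2\,dvol_g\\
&\quad-2\int_M \varphi\, \partial_\tau u\cdot(\nabla u\cdot\nabla\varphi)\,dvol_g +\int_{\partial M}\varphi^2\,\partial_\tau u\cdot\frac{\partial u}{\partial\overrightarrow{n}}\,d\sigma.
\end{align*}

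Two structural cancellations drive the argument. First, the $\partial M$ boundary term vanishes: since $u(\cdot,\tau)\in K$ on $\partial M$ for all $\tau$, differentiating in $\tau$ shows that $\partial_\tau u$ lies in $T_{u}K$, which is orthogonal to $\partial u/\partial\overrightarrow{n}$ by \eqref{HMF:4}. Second, substituting the evolution equation $\partial_\tau u=-\Delta_g u + A(u)(\nabla u,\nabla u)$ into the bulk term and using $A(u)(\nabla u,\nabla u)\perp T_uN$ while $\partial_\tau u\in T_uN$, one obtains $\int_M (\Delta_g u)\cdot\partial_\tau u\,\varphi^2 = -\int_M|\partial_\tau u|^2\varphi^2$. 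Combining with the Cauchy--Schwarz estimate $2|\varphi\,\partial_\tau u\cdot(\nabla u\cdot\nabla\varphi)|\leq \tfrac12\varphi^2|\partial_\tau u|^2 + C|\nabla\varphi|^2|\nabla u|^2$ and Lemma \ref{lem:monotonicity} (so that $E(u(\tau))\leq E(u_0)$), this yields the differential inequality
\begin{equation*}
\frac{d}{d\tau}\int_M \tfrac12|\nabla u|^2\varphi^2\,dvol_g + \tfrac12\int_M|\partial_\tau u|^2\varphi^2\,dvol_g \leq \frac{C}{R^2}E(u_0).
\end{equation*}

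Integrating from $t$ to $s$ and using $\varphi\equiv 1$ on $B^M_R(x_0)$ together with $\mathrm{supp}(\varphi)\subset B^M_{2R}(x_0)$ gives \eqref{inequality:11} after discarding the non-negative $|\partial_\tau u|^2$ contribution. For \eqref{inequality:12}, I would keep the $|\partial_\tau u|^2$ term and rearrange so that $\int_M\tfrac12|\nabla u(t)|^2\varphi^2$ sits on the left; Cauchy--Schwarz applied to the cross term as $|2\varphi\,\partial_\tau u\cdot(\nabla u\cdot\nabla\varphi)|\leq \varphi^2|\partial_\tau u|^2+4|\nabla\varphi|^2|\nabla u|^2$ then produces precisely the required $\int_t^s\int_M|\partial_\tau u|^2$ summand and the error $C(s-t)R^{-2}E(u_0)$. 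No serious obstacle arises; the only point of care, and the sole place where the free boundary structure enters, is the vanishing of the $\partial M$ boundary integral via \eqref{HMF:3}--\eqref{HMF:4}, which is exactly what lets the Lin--Wang closed-surface argument go through in our setting.
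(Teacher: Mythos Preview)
Your proposal is correct and follows essentially the same route as the paper: choose a cutoff $\eta$ supported in $B^M_{2R}(x_0)$, test the flow equation with $\eta^2\partial_t u$, kill the $\partial M$ boundary term via the free boundary orthogonality $\frac{\partial u}{\partial\overrightarrow{n}}\perp\partial_t u$, apply Cauchy--Schwarz to the cross term $2\int_M\partial_t u\,\nabla u\,\eta\,\nabla\eta$, and integrate from $t$ to $s$. The only cosmetic difference is that the paper packages the computation as a two-sided bound on $\frac{d}{dt}\big(\tfrac12\int_M|\nabla u|^2\eta^2\big)$ rather than splitting the Cauchy--Schwarz constants differently for the two inequalities, and it does not spell out the $A(u)(\nabla u,\nabla u)\perp\partial_t u$ step explicitly.
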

\begin{proof}
Let $\eta\in C^\infty_0(B^M_{2R}(x_0))$ be such that $0\leq\eta\leq 1$, $\eta|_{B^M_{R}(x_0)}\equiv 1$ and $|\nabla\eta|\leq\frac{C}{R}$. Multiplying \eqref{HMF:1} by $\eta^2\partial_t u$ and integrating by parts, we get
\begin{align*}
\int_M|\partial_tu|^2\eta^2dx+\frac{d}{dt}(\frac{1}{2}\int_M|\nabla u|^2\eta^2dx)&=\int_{\partial M}\frac{\partial u}{\partial \overrightarrow{n}}\cdot\partial_t u\eta^2-2\int_M\partial_tu\nabla u\eta\nabla\eta dx\\
&=-2\int_M\partial_tu\nabla u\eta\nabla\eta dx,
\end{align*}
where we used the free boundary condition that $\frac{\partial u}{\partial\overrightarrow{n}}\bot\partial_tu$.

Since
\[
|2\int_M\partial_tu\nabla u\eta\nabla\eta dx|\leq \frac{1}{2}\int_M|\partial_tu|^2\eta^2dx+2\int_M|\nabla u|^2|\nabla\eta|^2dx,
\]
we have
\begin{align*}
-\frac{3}{2}\int_M|\partial_tu|^2\eta^2dx-2\int_M|\nabla u|^2|\nabla\eta|^2dx\leq\frac{d}{dt}(\frac{1}{2}\int_M|\nabla u|^2\eta^2dx)\leq 2\int_M|\nabla u|^2|\nabla\eta|^2dx.
\end{align*}
Integrating the above inequality from $t$ to $s$, we will get the conclusion of the lemma.
\end{proof}

With the help of Lemma \ref{lem:two-balls}, we can apply the standard argument for the closed case (see Lemma 4.1 in \cite{Lin-Wang}) to obtain the following:
\begin{lem}\label{lem:2}
Let $u\in C^\infty(M\times (0,T_0),N)$ be a solution to (\ref{HMF:1}-\ref{HMF:4}). Suppose $x_0\in M$ is the only singular point at time $T_0$.
Then there exists a positive number $m>0$ such that
\begin{align}\label{equation:05}
|\nabla u|^2(x,t)dx\to m\delta_{x_0}+|\nabla u|^2(x,T_0)dx,
\end{align}
for $t \uparrow T_0$, as Radon measures. Here, $\delta_{x_0}$ denotes the $\delta-$mass at $x_0$.
\end{lem}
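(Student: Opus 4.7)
The plan is to first extract a weak Radon measure limit, then identify its singular part as a point mass at $x_0$, and finally show that the limit does not depend on the sequence $t_n \uparrow T_0$ by exploiting the two-balls estimate of Lemma \ref{lem:two-balls}. Throughout I use that by Lemma \ref{lem:monotonicity} the total energy is uniformly bounded, hence the family of Radon measures $\mu_t := |\nabla u|^2(\cdot,t)\,dvol_g$, $t\in[0,T_0)$, has uniformly bounded mass on $M$, so any sequence $t_n\uparrow T_0$ admits a subsequence along which $\mu_{t_n}\rightharpoonup \mu$ weakly in the sense of Radon measures for some nonnegative Radon measure $\mu$ on $M$.

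First, I would show that the absolutely continuous part of $\mu$ equals $|\nabla u|^2(\cdot,T_0)\,dvol_g$ and that the singular part is concentrated at $\{x_0\}$. For any $y\neq x_0$ choose $r>0$ with $D^M_{2r}(y)\not\ni x_0$; since $x_0$ is the only singular point at time $T_0$, the small energy regularity away from $x_0$ (this is where Theorem \ref{thm:1} together with the $\eps$-regularity at the free boundary given by Lemma \ref{lem:small-energy-regularity} and its interior counterpart Lemma \ref{lem:small-energy-regularity-interior} enter, applied to the approximate harmonic maps $u(\cdot,t)$ with tension $\tau(u(\cdot,t))=\partial_t u$ whose $L^2$-norm is integrable in $t$ by Lemma \ref{lem:monotonicity}) gives uniform $W^{2,2}$ bounds for $u(\cdot,t)$ on $D^M_r(y)$ as $t\uparrow T_0$, hence $u(\cdot,t)\to u(\cdot,T_0)$ strongly in $W^{1,2}(D^M_r(y),N)$. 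Consequently $\mu\llcorner (M\setminus\{x_0\}) = |\nabla u|^2(\cdot,T_0)\,dvol_g\llcorner (M\setminus\{x_0\})$, and since a nonnegative Radon measure supported in the single point $x_0$ is of the form $m\delta_{x_0}$, we obtain the decomposition \eqref{equation:05} for the chosen subsequence, with some $m\geq 0$.

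Next, the strict positivity $m>0$ follows by contradiction: if $m=0$, then for every $\eta>0$ there exists $r>0$ and $t^*<T_0$ such that $E(u(t);D^M_r(x_0))<\eta$ for all $t\in[t^*,T_0)$. Taking $\eta$ below the small-energy threshold of Lemma \ref{lem:small-energy-regularity}, one obtains uniform $W^{2,2}$ (hence $C^0$) bounds for $u(\cdot,t)$ on $D^M_{r/2}(x_0)$, which combined with standard parabolic bootstrap (using $\tau(u(\cdot,t))=\partial_t u\in L^2_t L^2_x$) extends $u$ smoothly across $(x_0,T_0)$, contradicting the singularity of $x_0$ at $T_0$.

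The main obstacle, and the last step, is to remove the subsequential dependence so that \eqref{equation:05} holds as $t\uparrow T_0$. This is where Lemma \ref{lem:two-balls} plays its crucial role. Given two sequences $t_n,s_n\uparrow T_0$ with respective concentration masses $m$ and $m'$, and any $R<R_0$, \eqref{inequality:11} applied with $(t,s)=(t_n,s_n)$ when $s_n>t_n$ gives
\[
E(u(s_n);D^M_{R}(x_0))\leq E(u(t_n);D^M_{2R}(x_0))+C\frac{s_n-t_n}{R^2}E(u_0),
\]
while \eqref{inequality:12} applied with $(t,s)=(t_n,s_n)$ yields the reverse inequality up to the additional term $C\int_{t_n}^{s_n}\int_M|\partial_t u|^2$, which tends to $0$ by the global energy estimate in Lemma \ref{lem:monotonicity}. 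Passing to the limit $n\to\infty$ in both and then $R\to 0$, using the already established absolutely continuous part, forces $m=m'$. Hence the full limit $\lim_{t\uparrow T_0}\mu_t$ exists and equals $m\delta_{x_0}+|\nabla u|^2(\cdot,T_0)\,dvol_g$, completing the proof.
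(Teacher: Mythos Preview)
Your argument is the standard Lin--Wang scheme that the paper explicitly invokes (the paper gives no proof beyond referring to Lemma~4.1 in \cite{Lin-Wang} together with Lemma~\ref{lem:two-balls}), so the approach is the same: subsequential Radon-measure compactness, identification of the diffuse part via strong convergence away from $x_0$, positivity of the atom by $\epsilon$-regularity, and removal of the subsequence using the two-balls comparison. Your Step~4 in particular is exactly how Lemma~\ref{lem:two-balls} is meant to be used, and the computation is correct.

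One small point to tighten in Step~2: you justify the strong $W^{1,2}$ convergence on $D^M_r(y)$, $y\neq x_0$, by applying the \emph{elliptic} small-energy regularity (Lemma~\ref{lem:small-energy-regularity-interior}, Lemma~\ref{lem:small-energy-regularity}) to $u(\cdot,t)$ with $\tau(u(\cdot,t))=\partial_t u(\cdot,t)$, noting that $\|\partial_t u\|_{L^2(M)}^2$ is integrable in $t$. Integrability in $t$ does not by itself give the pointwise-in-$t$ bound those lemmas require; the clean route here is simply to use that, by hypothesis (and Struwe's structure theorem for the flow, which underlies the notion of ``only singular point at $T_0$''), $u$ extends smoothly to $(M\setminus\{x_0\})\times[0,T_0]$, so $|\nabla u|^2(\cdot,t)\to|\nabla u|^2(\cdot,T_0)$ locally uniformly on $M\setminus\{x_0\}$. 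The reference to Theorem~\ref{thm:1} in this step is not needed.
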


Now, we begin to prove Theorem \ref{thm:infty-time} and Theorem \ref{thm:finite-time}.
Firstly, it is easy to see that Lemma \ref{lem:monotonicity}, Lemma \ref{lem:2} and Theorem \ref{main:thm-1} imply Theorem \ref{thm:infty-time}. In fact,

\begin{proof}[\textbf{Proof of Theorem \ref{thm:infty-time}}]
By Lemma \ref{lem:monotonicity}, we can find a sequence $t_n\uparrow\infty$ such that
\begin{align*}
\lim_{n\to\infty}\int_M|\partial_t u|^2(\cdot,t_n)dx=0\quad and \quad E(u(\cdot,t_n))\leq E(u_0).
\end{align*}
Take the sequence $u_n=u(\cdot,t_n)$, $\tau(u_n)=\partial_tu(\cdot,t_n)$ in Theorem \ref{main:thm-1}. Combining this with Lemma \ref{lem:2},  the conclusion of Theorem \ref{thm:infty-time} follows immediately.
\end{proof}

\begin{proof}[\textbf{Proof of Theorem \ref{thm:finite-time}}]
It is sufficient to consider the case that $(x_0,T_0)$ with $x_0\in\partial M$ being the only singular point at time $T_0$.
For the case of an interior singularity $x_0\in M\setminus\partial M$, one can refer to \cite{Lin-Wang}.
Without loss of generality, we may assume $M=D^+_1(0)$ and $x_0=0$. By Lemma \ref{lem:2}, there exist sequences $t_n\uparrow T_0$ and $\lambda_n\downarrow 0$ such that
\[
\lim_{n\to\infty}\int_{D^+_{\lambda_n}(0)}|\nabla u|^2(\cdot,t_n)dx=m.
\]

Let $u_n(x,t)=u(\lambda_nx,t_n+\lambda_n^2t).$ Without loss of generality, we may assume $t_n-2\lambda_n^2>0$. Then $u_n$ is defined in $D^+_{\lambda_n^{-1}}(0)\times [-2,0]$ satisfying \eqref{HMF:1} and
\[
\int_{-2}^0\int_{D^+_{\lambda_n^{-1}}(0)}|\partial_tu_n|^2dxdt=\int_{t_n-2\lambda_n^2}^{t_n}\int_{D^+_1(0)}|\partial_tu|^2dxdt\to 0
\]
as $n\to \infty$. By Fubini's theorem, there exists $s_n\in(-1,-\frac{1}{2})$ such that
\begin{equation}\label{equation:08}
\lim_{n\to\infty}\int_{D^+_{\lambda_n^{-1}}(0)}|\partial_tu_n|^2(\cdot,s_n)dx= 0.
\end{equation}

For the sequence $\{u_n(\cdot,s_n)\}$, there holds
\begin{equation}\label{equation:06}
\lim_{R\to\infty}\lim_{n\to\infty}\int_{D^+_R(0)}|\nabla u_n|^2(\cdot,s_n)dx=m.
\end{equation}
In fact, on the one hand, by \eqref{inequality:11}, we have
\begin{align*}
\int_{D^+_R(0)}|\nabla u_n|^2(\cdot,s_n)dx&=\int_{D^+_{\lambda_nR}(0)}|\nabla u|^2(\cdot,t_n+\lambda_n^2s_n)dx\geq
\int_{D^+_{\lambda_n}(0)}|\nabla u|^2(\cdot,t_n)dx-C\frac{1}{R^2}E(u_0).
\end{align*}
Thus,
\begin{equation}
\lim_{R\to\infty}\lim_{n\to\infty}\int_{D^+_R(0)}|\nabla u_n|^2(\cdot,s_n)dx\geq m.
\end{equation}

On the other hand, by \eqref{equation:05}, for any $R>0$ and $\sigma>0$, we have
\begin{align*}
\lim_{n\to\infty}\int_{D^+_{\lambda_nR}(0)}|\nabla u|^2(\cdot,t_n+\lambda_n^2s_n)dx
&\leq
\lim_{n\to\infty}\int_{D^+_{\sigma}(0)}|\nabla u|^2(\cdot,t_n+\lambda_n^2s_n)dx=m+\int_{D^+_{\sigma}(0)}|\nabla u|^2(\cdot,T_0)dx.
\end{align*}
Letting $\sigma\to 0$, we obtain
\begin{equation}\label{equation:07}
\lim_{n\to\infty}\int_{D^+_R(0)}|\nabla u_n|^2(\cdot,s_n)dx\leq m
\end{equation}
and \eqref{equation:06} follows immediately.

Fixing $R>0$, we consider the sequence $\{u_n(\cdot,s_n)\}_{n=1}^\infty$ which is defined in $D_R^+(0)$. By \eqref{equation:07} and \eqref{equation:08},
we know it is a sequence of maps from $D_R^+(0)$ to $N$ with finite energy and tension fields
$$\|\tau_n\|_{L^2(D^+_R(0))}=\|\partial_tu_n(\cdot,s_n)\|_{L^2(D^+_R(0))}\to 0$$
as $n\to \infty$. Moreover, for each $R>0$, $u_n(\cdot,s_n)$ weakly converges to a constant map. In fact, by Lemma \ref{lem:2}, for any $\sigma>0$, we have
\begin{align*}
\lim_{n\to\infty}E(u_n(\cdot,s_n),D_R^+\setminus D^+_\sigma)&=\lim_{n\to\infty}E(u(\cdot,t_n+\lambda_n^2 s_n),D_{\lambda_n R}^+\setminus D^+_{\lambda_n\sigma})\leq
\lim_{n\to\infty}E(u(\cdot,T_0),D_{\lambda_n R})=0.
\end{align*}

According to Theorem \ref{thm:1}, we know there exist $L_R$ nontrivial bubbles $\{w^i_R\}_{i=1}^{L_R}$ such that
\begin{equation}
\lim_{n\to\infty}E(u_n(\cdot,s_n),D_R^+)=\sum_{i=1}^{L_R}E(w_R^i).
\end{equation}

Since the energy of the bubble has a lower bound, $i.e.$ $E(w)\geq \overline{\epsilon_0}:=\min\{\epsilon_0,\epsilon_5\}$,
we have $1\leq L_R\leq \frac{m}{\overline{\epsilon_0}}+1$. Therefore, there exist a subsequence $R\uparrow\infty$ and a constant $L\in [1,\frac{m}{\overline{\epsilon_0}}+1]$ such that $L_R=L$ and
\begin{equation}
m=\lim_{R\to\infty}\lim_{n\to\infty}E(u_n(\cdot,s_n),D_R^+)
=\lim_{R\to\infty}\sum_{i=1}^{L}E(w_R^i).
\end{equation}

Using Theorem \ref{main:thm-1} with $M=S^2$ or $M=D$ and $\tau\equiv 0$, there exist $L_i$ bubbles $\{w^j\}_{j=1}^{L_i}$ such that
\begin{equation*}
\lim_{R\to\infty}E(w_R^i)=\sum_{j=1}^{L_i}E(w^j).
\end{equation*}
Then
\begin{equation}
m=\lim_{R\to\infty}\lim_{n\to\infty}E(u_n(\cdot,s_n),D_R^+)
=\lim_{R\to\infty}\sum_{i=1}^{L}E(w_R^i)=\sum_{i=1}^{L}\sum_{j=1}^{L_i}E(w^j).
\end{equation}
Combining with Lemma \ref{lem:2}, we obtain the conclusion of Theorem \ref{thm:finite-time}.
\end{proof}

\


\providecommand{\bysame}{\leavevmode\hbox to3em{\hrulefill}\thinspace}
\providecommand{\MR}{\relax\ifhmode\unskip\space\fi MR }
\providecommand{\MRhref}[2]{%
  \href{http://www.ams.org/mathscinet-getitem?mr=#1}{#2}
}
\providecommand{\href}[2]{#2}

\end{document}